\newcommand{\rrvert}{\vert}
\newcommand{\llvert}{\vert}
\newcommand{\cA}{\mathcal{A}}
\newcommand{\cB}{\mathcal{B}}
\newcommand{\cG}{\mathcal{G}}
\newcommand{\cF}{\mathcal{F}}
\newcommand{\cS}{\mathcal{S}}
\newcommand{\cE}{\mathcal{E}}
\newcommand{\cK}{\mathcal{K}}
\newcommand{\cJ}{\mathcal{J}}
\newcommand{\cI}{\mathcal{I}}
\newcommand{\cD}{\mathcal{D}}
\newcommand{\cM}{\mathcal{M}}
\newcommand{\cN}{\mathcal{N}}
\newcommand{\Ex}{\mathbb{E}}
\newcommand{\id}{\mathbf{1}}
\newcommand{\vNT}{\otimes}
\newcommand{\oto}{}
\newcommand{\ot}{\otimes}
\newcommand{\tr}{\tau}
\newcommand{\al}{\alpha}
\newcommand{\Om}{\Omega}
\newcommand{\Si}{\Sigma}
\newcommand{\R}{\mathbb{R}}
\newcommand{\C}{\mathbb{C}}
\newcommand{\N}{\mathbb{N}}
\newcommand{\bP}{\mathbb{P}}
\newcommand{\E}{\mathbb{E}}
\newcommand{\eps}{\varepsilon}
\newcommand{\ti}{\times}
\newcommand{\epf}{}
\newcommand\tnorm[1]{\llvert\! \llvert\! \llvert
#1\rrvert\! \rrvert\! \rrvert }
\newtheorem{theorem}[definition]{Theorem}
\newtheorem{corollary}[definition]{Corollary}
\newtheorem{lemma}[definition]{Lemma}
\begin{document}
\begin{frontmatter}

\title{It\^{o} isomorphisms for $L^{\lowercase{p}}$-valued Poisson stochastic~integrals\thanksref{T1}}
\runtitle{It\^{o} isomorphisms $L^{\lowercase{p}}$-valued Poisson stochastic integrals}

\begin{aug}
\author[A]{\fnms{Sjoerd} \snm{Dirksen}\corref{}\ead[label=e1]{sjoerd.dirksen@hcm.uni-bonn.de}}
\affiliation{University of Bonn}

\address[A]{Hausdorff Center for Mathematics\\
University of Bonn\\
Endenicher Allee 60\\
53115 Bonn\\
Germany\\
\printead{e1}}
\runauthor{S. Dirksen}
\end{aug}
\thankstext{T1}{Supported by VICI subsidy 639.033.604 of the
Netherlands Organisation for Scientific Research (NWO) and the
Hausdorff Center for Mathematics.}

\received{\smonth{9} \syear{2012}}
\revised{\smonth{10} \syear{2013}}

%
\begin{abstract}
Motivated by the study of existence, uniqueness and regularity of
solutions to stochastic partial differential equations driven by jump
noise, we prove It\^{o} isomorphisms for $L^p$-valued stochastic
integrals with respect to a compensated Poisson random measure. The
principal ingredients for the proof are novel Rosenthal type
inequalities for independent random variables taking values in a
(noncommutative) $L^p$-space, which may be of independent interest. As
a by-product of our proof, we observe some moment estimates for the
operator norm of a sum of independent random matrices.
\end{abstract}

%
\begin{keyword}[class=AMS]
\kwd[Primary ]{60H05}
\kwd[; secondary ]{60H15}
\kwd{60B20}
\kwd{60G50}
\kwd{46L53}
\end{keyword}

\begin{keyword}
\kwd{Poisson stochastic integration in Banach spaces}
\kwd{decoupling inequalities}
\kwd{vector-valued Rosenthal inequalities}
\kwd{noncommutative $L^p$-spaces}
\kwd{norm estimates for random matrices}
\end{keyword}

\end{frontmatter}

\section{Introduction}

In the functional analytic approaches to stochastic partial
differential equations (SPDEs), one studies an SPDE by reformulating it
as a stochastic ordinary differential equation in a suitable
infinite-dimensional state space $X$. A particularly popular method,
known as the semigroup approach, has proven very effective in obtaining
existence, uniqueness and regularity results for large classes of SPDEs
with Gaussian noise. A demonstration of this approach for SPDEs driven
by Gaussian noise in Hilbert spaces can be found in the monograph of Da
Prato and Zabczyk \cite{DaZ92}. In the last decade, there has been
increased interest in SPDEs driven by Poisson-type noise; see, for
instance, \cite{BrH09,FTP10,MPR10,MaR10} and the recent monograph
\cite
{PeZ07}. To obtain existence, uniqueness and regularity results for
such equations, one requires as a basic tool $L^p$-estimates for
vector-valued Poisson stochastic integrals. Concretely, one needs to
answer the following fundamental question. Suppose that we are given a
compensated Poisson random measure $\tilde{N}$ on $\R_+\times J$, where
$J$ is a $\sigma$-finite measure space, and a simple, adapted
$X$-valued process $F$. Can one find a suitable Banach space $\cI
_{p,X}$ such that
%
\begin{equation}
\label{eqn:ItoIsomorphismPoisson} c_{p,X} \tnorm{F}_{\cI_{p,X}} \leq \biggl(\E \biggl\|\int
_{\R_+\times
J} F \,d\tilde{N}\biggr \|_{X}^p
\biggr)^{{1}/{p}} \leq C_{p,X} \tnorm {F}_{\cI_{p,X}}
\end{equation}
for constants $c_{p,X},C_{p,X}$ depending only on $p$ and $X$? In the
SPDE literature, the right-hand side inequality is often referred to as
a Bichteler--Jacod inequality. This estimate allows one to define an
It\^{o}-type stochastic integral, sometimes called a strong or
$L^p$-stochastic integral in the literature \cite{App07,Rud04}, for all
elements in the closure of the simple adapted processes in $\cI_{p,X}$.
If both inequalities in (\ref{eqn:ItoIsomorphismPoisson}) hold
simultaneously, then we shall speak of an It\^{o} isomorphism. In this
situation, the choice of the space $\cI_{p,X}$ is optimal and therefore
$\cI_{p,X}$ provides the proper framework to study well-posedness and
regularity questions. We will call the corresponding Bichteler--Jacod
inequality optimal in this case, even though the constants
$c_{p,X},C_{p,X}$ in (\ref{eqn:ItoIsomorphismPoisson}) are not required
to be optimal. In the case of Gaussian noise, It\^{o} isomorphisms in
UMD Banach spaces were obtained in \cite{NVW07}. The optimality of
these estimates proved crucial in obtaining maximal regularity results
for stochastic parabolic evolution equations driven by Gaussian noise
\cite{NVW12}. One can expect optimal Bichteler--Jacod inequalities to
be similarly useful in the investigation of maximal regularity for
equations driven by Poisson or, more generally, L\'{e}vy noise.

Although Bichteler--Jacod inequalities are fundamental to the study of
SPDEs driven by jump noise and have been investigated by many authors
(see \cite{App07,BrH09,Hau11,MPR10,MaR10,PeZ07,Rud04} and the
references therein), a general It\^{o} isomorphism as available in the
Gaussian case is still missing. In fact, it seems that the optimality
of Bichteler--Jacod inequalities has not yet been investigated, not
even in the scalar-valued case. The main aim of this paper is to
provide optimal estimates of the form (\ref{eqn:ItoIsomorphismPoisson})
in the important case where $X$ is an $L^q$-space. On the one hand,
this result can serve as a stepping stone in the development of It\^{o}
isomorphisms in more general Banach spaces needed in the study of
SPDEs. On the other hand, our estimates are in itself valuable for
existence, uniqueness and regularity questions that can be addressed in
the setting of $L^q$-spaces; see, for example, \cite{MaR10} for interesting
examples.

With some additional effort, our estimates can be extended to the
situation where $X$ is a noncommutative $L^q$-space associated with a
semifinite von Neumann algebra $\cM$, for any $1<q<\infty$. To keep our
exposition accessible to readers who have little familiarity with
noncommutative analysis, we choose to focus on classical $L^q$-spaces
and only later indicate the modifications needed to prove our results
in full generality.

To formulate our main result for classical $L^q$-spaces, Theorem~\ref{thm1.1}, we introduce the following spaces. Let
$(S,\Si
,\sigma)$ be any measure space. We consider the completions $\cS
_q^p$, $\cD
_{q,q}^p$ and $\cD_{p,q}^p$ of the space of all simple functions in the
respective norms
%
\begin{eqnarray}
\label{eqn:normsSICom} \|F\|_{\cS_q^p} & =& \biggl(\E \biggl\| \biggl(\int
_{\R_+\ti J} |F|^2 \,dt\ti \,d\nu \biggr)^{{1}/{2}}
\biggr\|_{L^q(S)}^p \biggr)^{{1}/{p}},
\nonumber
\\
\|F\|_{\cD_{q,q}^p} & = &\biggl(\E \biggl(\int_{\R_+\ti J} \|F\|
_{L^q(S)}^q \,dt\ti \,d\nu \biggr)^{{p}/{q}}
\biggr)^{{1}/{p}},
\\
\|F\|_{\cD_{p,q}^p} & =& \biggl(\int_{\R_+\ti J} \E\|F
\|_{L^q(S)}^p \,dt\ti \,d\nu \biggr)^{{1}/{p}}.
\nonumber
\end{eqnarray}
We use the following notation. If $A,B$ are quantities depending on a
parameter $\alpha$, then we write $A\lesssim_{\alpha} B$ if there is a
constant $c_{\alpha}>0$ depending only on $\alpha$ such that $A\leq
c_{\alpha} B$. We write $A\simeq_{\alpha} B$ if both $A\lesssim
_{\alpha
} B$ and $B\lesssim_{\alpha} A$ hold. Also, we use $\chi_A$ to denote
the indicator function of a set $A$. Finally, to avoid ambiguity, let
us mention that we always take the notation $a<p,q<b$ to mean that both
$a<p<b$ and $a<q<b$ hold.
%
\begin{theorem}[(It\^{o} isomorphism)]\label{thm1.1}  Let $1<p,q<\infty
$. For any $B\in\cJ$, any $t>0$ and any simple, adapted $L^q(S)$-valued
process $F$,
%
\begin{equation}
\label{eqn:summarySILqPoisson} \biggl(\E\sup_{0<s\leq t}\biggl \|\int_{(0,s]\ti B}
F \,d\tilde{N}\biggr \|_{L^q(S)}^p \biggr)^{{1}/{p}} \simeq
_{p,q} \|F\chi_{(0,t]\ti
B}\|_{\cI_{p,q}},
\end{equation}
where $\cI_{p,q}$ is given by
\begin{eqnarray*}
\cS_{q}^p \cap\cD_{q,q}^p \cap
\cD_{p,q}^p &\qquad& \mathrm{if}\ 2\leq q\leq p<\infty,
\\
\cS_{q}^p \cap\bigl(\cD_{q,q}^p +
\cD_{p,q}^p\bigr) &\qquad& \mathrm{if}\ 2\leq p\leq q<\infty,
\\
\bigl(\cS_{q}^p \cap\cD_{q,q}^p
\bigr) + \cD_{p,q}^p&\qquad& \mathrm{if}\ 1<p<2\leq q<\infty,
\\
\bigl(\cS_{q}^p + \cD_{q,q}^p\bigr)
\cap\cD_{p,q}^p &\qquad& \mathrm{if}\ 1<q<2\leq p<\infty,
\\
\cS_{q}^p + \bigl(\cD_{q,q}^p \cap
\cD_{p,q}^p\bigr) &\qquad& \mathrm{if}\ 1<q\leq p\leq2,
\\
\cS_{q}^p + \cD_{q,q}^p +
\cD_{p,q}^p &\qquad&\mathrm{if}\ 1<p\leq q\leq2.
\end{eqnarray*}
Moreover, the estimate $\lesssim_{p,q}$ in (\ref
{eqn:summarySILqPoisson}) remains valid if $q=1$.
\end{theorem}
To understand the estimates in (\ref{eqn:summarySILqPoisson}), recall
that if $X$ and $Y$ are two Banach spaces which are continuously
embedded in some Hausdorff topological vector space, then their
intersection $X\cap Y$ and sum $X+Y$ are Banach spaces under the norms
\[
\|z\|_{X\cap Y} = \max\bigl\{\|z\|_{X},\|z\|_{Y}\bigr\}
\]
and
\[
\|z\|_{X+Y} = \inf\bigl\{\|x\|_{X} + \|y\|_{Y}\dvtx
z=x+y, x \in X, y \in Y\bigr\}.
\]
So, for example, if $2\leq p\leq q<\infty$ then $\|F\chi_{(0,t]\ti
B}\|
_{\cI_{p,q}}$ is equal to
\begin{eqnarray*}
&& \max \biggl[ \biggl(\E \biggl\| \biggl(\int_{(0,t]\ti B}
|F|^2 \,dt\ti \,d\nu \biggr)^{{1}/{2}} \biggr\|_{L^q(S)}^p
\biggr)^{{1}/{p}},
\\
&&\qquad\hspace*{3pt} \inf \biggl\{ \biggl(\E \biggl(\int_{(0,t]\ti B}
\|F_1\|_{L^q(S)}^q \,dt\ti \,d\nu \biggr)^{{p}/{q}}
\biggr)^{{1}/{p}}
\\
&&\hspace*{24pt}\qquad\quad{} + \biggl(\int_{(0,t]\ti B} \E\|F_2
\|_{L^q(S)}^p \,dt\ti \,d\nu \biggr)^{{1}/{p}} \biggr\}
\biggr],
\end{eqnarray*}
where the infimum is taken over all decompositions $F=F_1+F_2$ with
$F_1 \in\cD_{q,q}^p$ and $F_2 \in\cD_{p,q}^p$.

In comparison, recall that if $W$ is a Gaussian random measure on $\R
_+\ti J$, then for any $1<p,q<\infty$,
\[
\biggl(\E\sup_{0<s\leq t} \biggl\|\int_{(0,s]\ti B} F \,dW \biggr\|
_{L^q(S)}^p \biggr)^{{1}/{p}} \simeq_{p,q} \|F
\chi_{(0,t]\ti
B}\|_{\cS_q^p}.
\]
In the proof of the latter inequalities, as well as the more general
results in \cite{NVW07}, crucial use is made of the fact that any
mean-zero, real-valued Gaussian random variable has a standard normal
distribution once we divide it by its standard deviation. It is the
lack of this type of stability of Poisson random variables that
accounts for the more involved isomorphisms in Theorem~\ref{thm1.1}.

The result in Theorem~\ref{thm1.1} improves and extends
all the known estimates for $L^q$-valued Poisson stochastic integrals.
In fact, it seems that only the estimate ``$\lesssim_{p,q}$'' in (\ref
{eqn:summarySILqPoisson}) was obtained earlier in \cite{Hau11} for
$q=2$, $p=2^n$ for some $n \in\N$ (see also~\cite{MPR10} for a
near-optimal estimate for $q=2$, $2\leq p<\infty$). As it turns out,
this estimate is optimal. In all other cases, our optimal estimates
improve the results in the literature. We make a detailed comparison
with existing results at the end of Section~\ref
{sec:ItoPoissonNC}.

The proof of Theorem~\ref{thm1.1} relies on the
following decoupling inequalities. Let $\tilde{N}^c$ be a copy of
$\tilde{N}$ defined on a different probability space $(\Om_c,\cF
_c,\bP
_c)$, so that $\tilde{N}^c$ is independent of both $\tilde{N}$ and the simple,
adapted process $F$. If $X$ is a UMD Banach space, then for any
$1<p<\infty$,
%
\begin{equation}
\label{eqn:decouplingSIIntro} \biggl(\E \biggl\|\int_{(0,t]\ti B} F \,d\tilde{N}
\biggr\|_{X}^p \biggr)^{{1}/{p}} \simeq_{p,X}
\biggl(\E\E_c \biggl\|\int_{(0,t]\ti B} F \,d\tilde
{N}^c \biggr\|_{X}^p \biggr)^{{1}/{p}}.
\end{equation}
These inequalities are a special case of the decoupling inequalities
for martingale difference sequences in UMD Banach spaces due to
McConnell \cite{McC89} and Hitczenko \cite{HitUP}. A relatively simple
direct proof of (\ref{eqn:decouplingSIIntro}) can be found in, for
example \cite{Ver06}, Theorem 2.4.1. For completeness, we reproduce
this argument in Appendix~\ref{app:decoupling}. Observe that for a
simple process $F$, the decoupled stochastic integral on the right-hand
side can be written as a sum of conditionally independent, mean-zero
random variables. Thus, the key to obtaining an It\^{o} isomorphism as
in (\ref{eqn:ItoIsomorphismPoisson}) lies in answering the following
question: given $1\leq p<\infty$ and a Banach space~$X$, can we find
constants $c_{p,X},C_{p,X}$ depending only on $p$ and $X$ such that for
any sequence of independent, mean-zero $X$-valued random variables
$(\xi_i)$
%
\begin{equation}
\label{eqn:questionRos} c_{p,X} \bigl|\!\bigl|\!\bigl|(\xi_i)
\bigr|\!\bigr|\!\bigr|_{p,X} \leq
\biggl(\Ex\biggl \|\sum_i \xi_i
\biggr\|_X^p \biggr)^{{1}/{p}} \leq C_{p,X}
\bigl|\!\bigl|\!\bigl|(\xi_i)\bigr|\!\bigr|\!\bigr|_{p,X}
\end{equation}
for a suitable norm $\tnorm{\cdot}_{p,X}$ which can be computed
explicitly in terms of the (moments of the) individual summands $\xi
_i$? These kind of inequalities can be termed vector-valued Rosenthal
inequalities, since in the case $X=\C$ the well-known answer to this
question is due to Rosenthal \cite{Ros70}: For $2\leq p<\infty$, there
exists an absolute constant $c$ such that
%
\begin{eqnarray}
\label{eqn:ClassicalRosIntro} \qquad\biggl(\E \biggl|\sum_i
\xi_i \biggr|^p \biggr)^{{1}/{p}} & \leq& c
\frac{p}{\log p} \max \biggl\{ \biggl(\sum_i \E|
\xi_i|^p \biggr)^{{1}/{p}}, \biggl(\sum
_i \E|\xi_i|^2
\biggr)^{{1}/{2}} \biggr\},
\nonumber
\\[-8pt]
\\[-8pt]
\nonumber
\biggl(\E \biggl|\sum_i \xi_i
\biggr|^p \biggr)^{{1}/{p}} & \geq&\frac{1}{2} \max \biggl\{
\biggl(\sum_i \E|\xi_i|^p
\biggr)^{{1}/{p}}, \biggl(\sum_i \E|\xi
_i|^2 \biggr)^{{1}/{2}} \biggr\}.
\end{eqnarray}
A version of (\ref{eqn:ClassicalRosIntro}) for noncommutative random
variables, as well as a version for $1<p\leq2$, was recently obtained
by Junge and Xu \cite{JuX08}. Their main results yield two-sided bounds
of the form (\ref{eqn:questionRos}) if $X$ is a (noncommutative)
$L^q$-space and $p=q$. Various upper bounds for the moments of a
martingale with values in a uniformly $2$-smooth Banach space were
obtained by Pinelis \cite{Pin94}. However, these results lead to a
two-sided estimate of the form (\ref{eqn:questionRos}) only if $X$ is a
Hilbert space (see \cite{Pin94}, Theorem 5.2).

Our main result in this direction provides Rosenthal-type inequalities
for independent random variables taking values in a noncommutative
$L^q$-space. We state the version for classical $L^q$-spaces. We
consider the following norms on the linear space of all finite
sequences $(f_i)$ of random variables in $L^{\infty}(\Om;L^q(S))$. For
$1\leq p,q<\infty$, we set
%
\begin{eqnarray}
\label{eqn:normsRosCom} \bigl\|(f_i)\bigr\|_{S_{q}} & =& \biggl\| \biggl(\sum
_i \E|f_i|^2 \biggr)^{{1}/{2}}
\biggr\|_{L^q(S)},
\nonumber
\\[-8pt]
\\[-8pt]
\nonumber
\bigl\|(f_i)\bigr\|_{D_{p,q}} & =& \biggl(\sum
_i\E\|f_i\|_{L^q(S)}^p
\biggr)^{{1}/{p}}.
\end{eqnarray}
%
\begin{theorem}
\label{thm:summaryRosIntro} Let $1<p,q<\infty$ and let $(S,\Si
,\sigma)$ be
a measure space. If $(\xi_i)$ is a sequence of independent, mean-zero
random variables taking values in $L^q(S)$, then
%
\begin{equation}
\label{eqn:summaryRos} \biggl(\E \biggl\|\sum_i
\xi_i \biggr\| _{L^q(S)}^p \biggr)^{{1}/{p}}
\simeq_{p,q} \bigl\|(\xi_i)\bigr\|_{s_{p,q}},
\end{equation}
where $s_{p,q}$ is given by
\begin{eqnarray*}
S_{q} \cap D_{q,q} \cap D_{p,q} &\qquad& \mathrm{if}\ 2\leq
q\leq p<\infty,
\\
S_{q} \cap(D_{q,q} + D_{p,q}) &\qquad& \mathrm{if}\ 2\leq p
\leq q<\infty,
\\
(S_{q} \cap D_{q,q}) + D_{p,q} &\qquad&\mathrm{if}\ 1<p<2
\leq q<\infty,
\\
(S_{q} + D_{q,q}) \cap D_{p,q} &\qquad& \mathrm{if}\ 1<q<2
\leq p<\infty,
\\
S_{q} + (D_{q,q} \cap D_{p,q}) &\qquad& \mathrm{if}\ 1<q
\leq p\leq2,
\\
S_{q} + D_{q,q} + D_{p,q} &\qquad& \mathrm{if}\ 1<p\leq q
\leq2.
\end{eqnarray*}
Moreover, the estimate $\lesssim_{p,q}$ in (\ref{eqn:summaryRos})
remains valid if $p=1$, $q=1$ or both.
\end{theorem}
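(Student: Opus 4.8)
The plan is to build the six estimates in \eqref{eqn:summaryRos} out of the single case $p=q$, using in addition the classical scalar Rosenthal inequality \eqref{eqn:ClassicalRosIntro}, the Banach-space geometry of $L^q(S)$ (type, cotype and Khintchine-type inequalities), Minkowski's integral inequality, and a truncation scheme. Observe that for $p=q$ the left-hand side of \eqref{eqn:summaryRos} equals $\|\sum_i\xi_i\|_{L^q(\Omega\times S)}$ and, by Fubini, $\|(\xi_i)\|_{D_{q,q}}=(\sum_i\|\xi_i\|_{L^q(\Omega\times S)}^q)^{1/q}$, so that \eqref{eqn:summaryRos} is precisely the specialisation of the noncommutative Rosenthal inequalities of Junge and Xu to the commutative von Neumann algebra $L^\infty(\Omega\times S)$: for $q\geq 2$ it asserts $\|\sum_i\xi_i\|_{L^q(\Omega\times S)}\simeq_q\max\{\|(\xi_i)\|_{S_q},\|(\xi_i)\|_{D_{q,q}}\}$, and for $1<q\leq 2$ its sum-version $\|\sum_i\xi_i\|_{L^q(\Omega\times S)}\simeq_q\|(\xi_i)\|_{S_q+D_{q,q}}$. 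By the usual symmetrisation inequalities I would also assume the $\xi_i$ symmetric (replace $(\xi_i)$ by $(\eps_i\xi_i)$ with $(\eps_i)$ an independent Rademacher sequence; none of the six norms $s_{p,q}$ changes by more than a fixed factor, nor under contraction by a bounded scalar sequence). Finally, the six right-hand sides are organised by the order type of the triple $(p,q,2)$: roughly, $S_q$ carries the Gaussian/CLT part, $D_{p,q}$ the ``single dominant summand'' part, and $D_{q,q}$ an intermediate $q$-th moment part, while the position of $p,q$ relative to $2$ decides whether a given pair of these is combined by $\cap$ or by $+$.

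\textbf{Lower estimate.}
First I would estimate $\|(\xi_i)\|_{s_{p,q}}$ piece by piece. A constituent occurring inside an intersection must be dominated by $(\E\|\sum_i\xi_i\|_{L^q}^p)^{1/p}$ on its own. For $D_{p,q}$ (which sits in an intersection precisely when $p\geq 2$) this is the Banach-space form of Rosenthal's disjointification argument: symmetrise, condition on all but one summand, and use that the $p$-th moment of $\sum_i\xi_i$ dominates that of $\max_i\|\xi_i\|_{L^q(S)}$, which for independent summands is comparable to $\|(\xi_i)\|_{D_{p,q}}$. For $S_q$ and $D_{q,q}$ one lowers the exponent (via $L^q(\Omega)\subseteq L^p(\Omega)$ on a probability space, or a reverse H\"older step) to the case $p=q$ and invokes Junge and Xu, after a contraction by Rademachers and the lower $L^q(S)$-Khintchine inequality in the case of $S_q$. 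A constituent occurring inside a sum need not be dominated on its own; instead one exhibits an explicit decomposition of $(\xi_i)$, obtained by truncating each $\xi_i$ at a level comparable to $(\E\|\sum_i\xi_i\|_{L^q}^p)^{1/p}$ and re-centering, in which the small parts fall on the $S_q/D_{q,q}$ side and the large parts on the $D_{p,q}$ side, with all norms controlled by the left-hand side.

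\textbf{Upper estimate.}
Here a truncation at level $\lambda\simeq_p\|(\xi_i)\|_{D_{p,q}}$ reduces matters to a sequence $(\eta_i)$ with $\|\eta_i\|_{L^q(S)}\leq 2\lambda$ almost surely: the discarded large parts contribute at most $\lesssim_p\|(\xi_i)\|_{D_{p,q}}$ by a Rosenthal-type estimate for sums of independent nonnegative random variables, and truncation followed by re-centering increases neither $\|\cdot\|_{S_q}$ nor $\|\cdot\|_{D_{q,q}}$ by more than a constant. For the bounded sequence one uses the $p=q$ anchor to control $\E\|\sum_i\eta_i\|_{L^q(S)}^q$ and must then pass to the $p$-th moment. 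When $p\leq q$ one can lower the exponent ($L^p(\Omega)\subseteq L^q(\Omega)$) and feed the result through Junge and Xu, Minkowski's inequality and the type/cotype inequalities of $L^q(S)$, splitting off the $D_{p,q}$ and $S_q$ pieces and recombining over the infimum decomposition dictated by the form of $s_{p,q}$. When $p>q$ — the genuinely intersection-type regime — one instead exploits the uniform bound $\|\eta_i\|_{L^q(S)}\leq 2\lambda$: a Talagrand-type convex concentration inequality for $\|\sum_i\eta_i\|_{L^q(S)}$ (equivalently, the self-bounding martingale it produces) upgrades the $q$-th moment to the $p$-th at the sole cost of an additive term comparable to $\lambda\simeq\|(\xi_i)\|_{D_{p,q}}$, yielding the three-term bound.

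\textbf{The hard part.}
The delicate step is exactly this last one — the upper estimate in the pure-intersection cases, most sharply $2\leq q\leq p$. The obvious routes are too lossy: applying the scalar Rosenthal inequality fibrewise over $S$ after exchanging $L^p(\Omega)$ and $L^q(S)$ by Minkowski, or running a plain bounded-difference martingale, both reintroduce the exponent $2$ and leave behind a $\|(\eta_i)\|_{D_{2,q}}$-type term which is \emph{not} dominated by $\max\{\|(\xi_i)\|_{S_q},\|(\xi_i)\|_{D_{q,q}},\|(\xi_i)\|_{D_{p,q}}\}$. To close the argument one must genuinely use that $\|\sum_i\eta_i\|_{L^q(S)}$ concentrates with a sub-Gaussian parameter comparable to the support radius $\lambda$ alone, with no dependence on the number of summands — or replace the concentration step by a carefully iterated, multi-level truncation preserving the $D_{p,q}$-scale. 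A related bit of care is needed at the endpoints $p=2$ (a direct Hilbertian/orthogonality argument stands in for the concentration step) and $p=1$ (the one-sided bound is forced through by truncation rather than interpolation), which is why only $\lesssim_{p,q}$ is claimed there.
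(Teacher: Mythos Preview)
Your strategy is workable in outline but differs from the paper's on both halves, and the crucial step in your upper bound contains a gap. For the lower bounds you propose direct arguments and explicit truncation decompositions; the paper instead obtains all of $\gtrsim_{p,q}$ by \emph{duality} from the upper bounds: once $\lesssim_{p,q}$ is proved in every regime, the identification $(s_{p,q})^*=s_{p',q'}$ (via $(X\cap Y)^*=X^*+Y^*$, $(X+Y)^*=X^*\cap Y^*$) together with a short pairing argument --- pair $(\xi_i)$ against an arbitrary $(\eta_i)$, replace $\eta_i$ by $\E(\eta_i|\xi_i)-\E\eta_i$ to restore independence and mean zero, and apply the already-established upper bound in the dual exponents --- yields the reverse inequality in one stroke. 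This is considerably cleaner than constructing decompositions by hand in the sum-type cases.

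For the upper bound in the hard regime $2\leq q\leq p$, your concentration claim is wrong as stated: $\|\sum_i\eta_i\|_{L^q}$ does \emph{not} concentrate sub-Gaussianly with parameter comparable to the truncation level $\lambda$ alone, independently of the number of summands --- already for $n$ i.i.d.\ scalar summands bounded by $\lambda$ the fluctuation scale is $\sqrt{n}\,\lambda$, and the norm is not self-bounding in the sense needed for variance-free concentration. The right tool to raise moments is simply Hoffmann--J{\o}rgensen's inequality $(\E\|\sum_i\xi_i\|^p)^{1/p}\lesssim_p\E\|\sum_i\xi_i\|+(\E\max_i\|\xi_i\|^p)^{1/p}$, which the paper itself invokes for the case $1\leq q\leq 2\leq p$; with this, your reduction to the $p=q$ anchor goes through and truncation is not even necessary. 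The paper's own route for $2\leq p,q<\infty$ is different again: it proves directly, by a square-function bootstrap (symmetrize, Khintchine, center the squares $|\xi_i|^2-\E|\xi_i|^2$, re-apply symmetrization and Khintchine, and close a quadratic inequality via a H\"older interpolation between $\ell^2$ and $\ell^q$), the two-sided bound with right-hand side $\max\{\|(\xi_i)\|_{S_q},(\E(\sum_i\|\xi_i\|_{L^q}^q)^{p/q})^{1/p}\}$, and only then splits the second term into $D_{q,q}$ and $D_{p,q}$ by the scalar positive Rosenthal inequality. This avoids treating the Junge--Xu $p=q$ result as a black box and handles the two sub-cases $q\leq p$ and $p\leq q$ uniformly.
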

The notational similarity between the spaces introduced in (\ref
{eqn:normsSICom}) and (\ref{eqn:normsRosCom}) is intentional. Indeed,
when applying Theorem~\ref{thm:summaryRosIntro} to the decoupled
Poisson stochastic integral on the right-hand side of (\ref
{eqn:decouplingSIIntro}), the spaces $S_q$, $D_{q,q}$, and $D_{p,q}$
give rise to $\cS^p_q$, $\cD^p_{q,q}$ and $\cD^p_{p,q}$,
respectively.

If $p=q$, then the result in Theorem~\ref{thm:summaryRosIntro} (as well
as its generalization in Theorem~\ref{thm:summaryRosenthalLqNC}) is a
special case of the noncommutative Rosenthal inequalities in \cite
{JuX08} and the only novelty here is a new proof. However, in
applications of Theorem~\ref{thm1.1}, and hence of
Theorem~\ref{thm:summaryRosIntro}, one is typically also interested in
the case $p\neq q$.

As said before, we can even prove an extension of the It\^{o}
isomorphism in Theorem~\ref{thm1.1} in which $L^q(S)$
is replaced by a general noncommutative $L^q$-space associated with a
semifinite von Neumann algebra $\cM$. This result is stated and proved
in Theorem~\ref{thm:summarySILqPoissonNC} below. The proof proceeds
along the same lines as the result for classical $L^q$-spaces and in
particular requires a version of the Rosenthal-type inequalities stated
above for random variables taking values in a noncommutative
$L^q$-space, which we prove in Theorem~\ref{thm:summaryRosenthalLqNC}.
As a by-product of the proof of Theorem~\ref{thm:summaryRosenthalLqNC},
we take the opportunity to observe the following estimates for the
moments of the operator norm of a sum of independent, mean-zero $d_1\ti
d_2$ random matrices $(x_i)$, which may be of independent interest. If
$2\leq p<\infty$ and $d=\min\{d_1,d_2\}$, then
\begin{eqnarray*}
\biggl(\Ex \biggl\|\sum_i x_i
\biggr\|^p \biggr)^{{1}/{p}} & \leq& C_{p,d} \max \biggl\{ \biggl\|
\biggl(\sum_i \Ex|x_i|^2
\biggr)^{{1}/{2}}\biggr \|,\biggl \| \biggl(\sum_i
\Ex\bigl|x_i^*\bigr|^2 \biggr)^{{1}/{2}} \biggr\|,\\
&&\hspace*{120pt} C_{{p}/{2},d} \Bigl(\Ex\max_i \|x_i
\|^p \Bigr)^{{1}/{p}} \biggr\},
\end{eqnarray*}
where $C_{p,d}$ is of order $\max\{\sqrt{p},\sqrt{\log d}\}$. In
Section~\ref{sec:randomMatrices}, we compare this result to known
estimates for random matrices.

An application of Theorem~\ref{thm1.1} is discussed in
\cite{DMN12}.

\section{$L^q$-valued Rosenthal inequalities}

We start by proving Theorem~\ref{thm:summaryRosIntro}. Throughout, we
fix a measure space $(S,\Si,\sigma)$. Let us collect some tools that we
will use in the proof. First recall the Khintchine inequalities for
$L^q(S)$. Let $(r_i)$ be a Rademacher sequence, that is, a sequence of
independent, identically distributed random variables satisfying $\bP
(r_i=1)=\bP(r_i=-1)=1/2$. Then, for any $0<p,q<\infty$ and any finite
sequence $(x_i)$ in $L^q(S)$ we have
%
\begin{equation}
\label{eqn:KhiComLp} \biggl(\E \biggl\|\sum_{i}
r_i x_i \biggr\|_{L^q(S)}^p
\biggr)^{{1}/{p}} \simeq _{p,q}\biggl \| \biggl(\sum
_{i}|x_i|^2 \biggr)^{{1}/{2}}
\biggr\|_{L^q(S)}.
\end{equation}
We will frequently use this result in combination with the following
well-known symmetrization inequalities (see, e.g., \cite{LeT91}, Lemma
6.3). Let $1\leq p<\infty$, let $X$ be a Banach space and $(\xi_i)$ a
sequence of independent, mean-zero $X$-valued random variables. If
$(r_i)$ is a Rademacher sequence defined on a probability space $(\Om
_r,\cF_r,\bP_r)$, then
%
\begin{equation}
\label{eqn:symmetrization} \qquad\frac{1}{2} \biggl(\E \biggl\|\sum_i
\xi_i \biggr\|_{X}^p \biggr)^{{1}/{p}} \leq
\biggl(\E_r\E \biggl\|\sum_i
r_i \xi_i \biggr\|_{X}^p
\biggr)^{{1}/{p}} \leq2 \biggl(\E \biggl\|\sum_i
\xi_i \biggr\|_{X}^p \biggr)^{{1}/{p}}.
\end{equation}
As a first consequence, we find the following useful estimates.
%
\begin{lemma}
\label{lem:squareFunEst2}
Suppose that $1\leq p,q\leq2$. Let $(\xi_i)$ be a finite sequence of
independent, mean-zero $L^q(S)$-valued random variables. Then
\[
\biggl(\E \biggl\|\sum_i \xi_i
\biggr\|_{L^q(S)}^p \biggr)^{{1}/{p}} \lesssim _{p,q} \biggl\|
\biggl(\sum_i \E|\xi_i|^2
\biggr)^{{1}/{2}} \biggr\|_{L^q(S)}.
\]
On the other hand, if $2\leq p,q<\infty$ then
\[
\biggl\| \biggl(\sum_i \E|\xi_i|^2
\biggr)^{{1}/{2}} \biggr\|_{L^q(S)} \lesssim _{p,q} \biggl(\E \biggl\|
\sum_i \xi_i\biggr \|_{L^q(S)}^p
\biggr)^{{1}/{p}}.
\]
\end{lemma}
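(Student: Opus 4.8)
The plan is to reduce both inequalities to the scalar Khintchine inequalities \eqref{eqn:KhiComLp} together with the symmetrization inequalities \eqref{eqn:symmetrization}, using the triangle inequality in the appropriate direction depending on whether the exponents are below or above $2$.

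\emph{First inequality ($1\le p,q\le 2$).} By \eqref{eqn:symmetrization} it suffices to bound $(\E_r\E\|\sum_i r_i\xi_i\|_{L^q(S)}^p)^{1/p}$, where the $(r_i)$ are a Rademacher sequence on an auxiliary space. Condition on the $(\xi_i)$ and apply the Khintchine inequality \eqref{eqn:KhiComLp} in the $r$-variables: this gives, for fixed realizations of the $\xi_i$,
\[
\Big(\E_r\Big\|\sum_i r_i\xi_i\Big\|_{L^q(S)}^p\Big)^{\frac{1}{p}} \simeq_{p,q} \Big\|\Big(\sum_i |\xi_i|^2\Big)^{\frac{1}{2}}\Big\|_{L^q(S)}.
\]
Now take $\E$ over the $\xi_i$ and use that $p\le q\le 2$ means we want to move the expectation inside. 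Since $p\le q$, by Minkowski's integral inequality (in the form $\|\cdot\|_{L^p(\Omega)}$ of an $L^{q/2}$-norm, with $p/2\le q/2$, i.e. exponents $\le 1$ handled via the reverse direction) we get
\[
\Big(\E\Big\|\Big(\sum_i |\xi_i|^2\Big)^{\frac{1}{2}}\Big\|_{L^q(S)}^p\Big)^{\frac{1}{p}} \le \Big\|\Big(\sum_i \E|\xi_i|^2\Big)^{\frac{1}{2}}\Big\|_{L^q(S)};
\]
concretely, $\|(\sum_i|\xi_i|^2)^{1/2}\|_{L^q(S)}^2 = \|\sum_i|\xi_i|^2\|_{L^{q/2}(S)}$, and since $q/2\le 1$ the functional $g\mapsto \|g\|_{L^{q/2}(S)}$ is superadditive, so $\E$ of this is at most $\|\sum_i\E|\xi_i|^2\|_{L^{q/2}(S)}$ by a second application of Minkowski (now with $p/2\le 1$, requiring care but following the same superadditivity). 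Chaining the displays gives the claim.

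\emph{Second inequality ($2\le p,q<\infty$).} Again start from \eqref{eqn:symmetrization}, which now gives $(\E\|\sum_i\xi_i\|_{L^q(S)}^p)^{1/p}\gtrsim (\E_r\E\|\sum_i r_i\xi_i\|_{L^q(S)}^p)^{1/p}$. Condition on the $\xi_i$ and use \eqref{eqn:KhiComLp} in the $r$-variables as before to obtain
\[
\Big(\E_r\E\Big\|\sum_i r_i\xi_i\Big\|_{L^q(S)}^p\Big)^{\frac{1}{p}} \simeq_{p,q} \Big(\E\Big\|\Big(\sum_i|\xi_i|^2\Big)^{\frac{1}{2}}\Big\|_{L^q(S)}^p\Big)^{\frac{1}{p}}.
\]
Now the exponents $p/2,q/2\ge 1$, so Minkowski's integral inequality runs the other way: $\|(\sum_i\E|\xi_i|^2)^{1/2}\|_{L^q(S)}^2 = \|\sum_i \E|\xi_i|^2\|_{L^{q/2}(S)} \le \sum_i\|\E|\xi_i|^2\|_{L^{q/2}(S)} = \sum_i\|\E|\xi_i|^2\|_{L^{q/2}(S)}$, and then $\E\|\cdot\|_{L^{q/2}(S)}\ge \|\E(\cdot)\|_{L^{q/2}(S)}$ plus $\|\cdot\|_{L^p(\Omega)}\ge \|\cdot\|_{L^1(\Omega)}$ since $p/2\ge 1$. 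More cleanly: $\|(\sum_i\E|\xi_i|^2)^{1/2}\|_{L^q(S)} = \|\sum_i\E|\xi_i|^2\|_{L^{q/2}(S)}^{1/2} \le (\E\|\sum_i|\xi_i|^2\|_{L^{q/2}(S)})^{1/2} \le (\E\|\sum_i|\xi_i|^2\|_{L^{q/2}(S)}^{p/2})^{1/p} = (\E\|(\sum_i|\xi_i|^2)^{1/2}\|_{L^q(S)}^p)^{1/p}$, using Jensen's inequality in $\Omega$ and then $L^1(\Omega)\hookrightarrow L^{p/2}(\Omega)$ is the wrong direction — instead use that raising to power $p/2\ge 1$ and taking $\E$ only increases, i.e. $(\E Y)^{p/2}\le \E Y^{p/2}$ by Jensen with the convex function $y\mapsto y^{p/2}$. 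Combining with the previous display yields the asserted lower bound.

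\emph{Main obstacle.} The routine but slightly delicate point is keeping track of which direction Minkowski's/Jensen's inequality goes in each regime: for $p,q\le 2$ one exploits the \emph{superadditivity} of $g\mapsto\|g\|_{L^{q/2}}$ and concavity of $t\mapsto t^{p/2}$ to pull $\E$ inside past the square-function, whereas for $p,q\ge 2$ one uses subadditivity/convexity to do the reverse. No new ideas are needed beyond \eqref{eqn:KhiComLp} and \eqref{eqn:symmetrization}; the proof is a careful bookkeeping of these elementary convexity facts.
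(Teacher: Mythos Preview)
Your approach is essentially identical to the paper's: symmetrization plus Khintchine reduces to the square function $(\E\|(\sum_i|\xi_i|^2)^{1/2}\|_{L^q}^p)^{1/p}=(\E\|\sum_i|\xi_i|^2\|_{L^{q/2}}^{p/2})^{1/p}$, and then two applications of Jensen (concavity of $t\mapsto t^{p/2}$ and of $g\mapsto\|g\|_{L^{q/2}}$ on the positive cone when $p/2,q/2\le 1$, convexity when $p/2,q/2\ge 1$) move the expectation inside. Two clean-up points: the hypothesis ``$p\le q$'' you invoke in the first part is neither assumed nor needed---your ``concretely'' sentence uses only $p/2\le 1$ and $q/2\le 1$ separately, which is what the paper does; and the second part should be rewritten without the false starts, keeping only the final chain $\|\E g\|_{L^{q/2}}\le \E\|g\|_{L^{q/2}}\le (\E\|g\|_{L^{q/2}}^{p/2})^{2/p}$.
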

\begin{pf}
Let $1\leq p,q\leq2$. Combining (\ref{eqn:symmetrization}) and (\ref
{eqn:KhiComLp}) yields
\begin{eqnarray*}
\biggl(\E \biggl\|\sum_i \xi_i
\biggr\|_{L^q(S)}^p \biggr)^{{1}/{p}} & \simeq& \hspace*{-2pt}_{p,q}
\biggl(\E \biggl\| \biggl(\sum_i |\xi_i|^2
\biggr)^{1/2} \biggr\| _{L^q(S)}^p \biggr)^{{1}/{p}}
\\
& = &\biggl(\E \biggl\|\sum_i |\xi_i|^2
\biggr\|_{L^{q/2}(S)}^{p/2} \biggr)^{{1}/{p}}
\\
& \leq& \biggl\|\sum_i \E|\xi_i|^2
\biggr\|_{L^{{q}/{2}}(S)}^{1/2} = \biggl\| \biggl(\sum_i
\E|\xi_i|^2 \biggr)^{{1}/{2}} \biggr\| _{L^q(S)}.
\end{eqnarray*}
Note that in the final inequality we apply Jensen's
inequality, using that $\frac{p}{2},\frac{q}{2}<1$. If we assume
$2\leq
p,q<\infty$, then this inequality is reversed.
\end{pf}
We recall the notions of type and cotype. A Banach space $X$ is said to
have \emph{type~$s$} for some $1\leq s\leq2$ if for any finite
sequence $(x_i)$ in $X$
\[
\biggl(\Ex \biggl\|\sum_i r_i
x_i \biggr\|_X^2 \biggr)^{{1}/{2}} \lesssim
_{s,X} \biggl(\sum_i\|x_i
\|_X^s \biggr)^{{1}/{s}}.
\]
A Banach space $X$ is said to have \emph{cotype $s$} for some $2\leq
s<\infty$ if for any finite sequence $(x_i)$ in $X$
\[
\biggl(\sum_i\|x_i
\|_X^s \biggr)^{{1}/{s}} \lesssim_{s,X}
\biggl(\Ex \biggl\| \sum_i r_i
x_i \biggr\|_X^2 \biggr)^{{1}/{2}}.
\]
It is well known that any $L^q$-space with $1\leq q<\infty$ has type
$\min\{q,2\}$ and cotype $\max\{q,2\}$. The following observation is
well known, we include a proof for the convenience of the reader. The
main ingredients are Kahane's inequalities (see, e.g., \cite{LeT91},
Theorem 4.7): for any $0<p,q<\infty$ there exists a constant $\kappa
_{p,q}$ such that for any Banach space $X$ and $x_1,\ldots,x_n \in X$,
%
\begin{equation}
\label{eqn:Kahane} \Biggl(\E \Biggl\|\sum_{i=1}^n
r_i x_i \Biggr\|_X^p
\Biggr)^{{1}/{p}} \leq \kappa_{p,q} \Biggl(\E \Biggl\|\sum
_{i=1}^n r_i x_i
\Biggr\|_X^q \Biggr)^{{1}/{q}}.
\end{equation}
%
\begin{lemma}
\label{lem:typeCotypeEst}
Fix $1\leq p<\infty$. Let $X$ be a Banach space and $(\xi_i)$ be a
finite sequence of independent, mean-zero $X$-valued random variables.
If $X$ has type $1\leq s\leq2$, then
\[
\biggl(\Ex \biggl\|\sum_i \xi_i
\biggr\|_X^p \biggr)^{{1}/{p}} \lesssim _{p,s,X}
\biggl(\Ex \biggl(\sum_i \|\xi_i
\|_X^s \biggr)^{p/s} \biggr)^{{1}/{p}}.
\]
On the other hand, if $X$ has cotype $2\leq s<\infty$, then
\[
\biggl(\Ex \biggl(\sum_i \|\xi_i
\|_X^s \biggr)^{p/s} \biggr)^{{1}/{p}}
\lesssim_{p,s,X} \biggl(\Ex \biggl\|\sum_i
\xi_i \biggr\|_X^p \biggr)^{{1}/{p}}.
\]
\end{lemma}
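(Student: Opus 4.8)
The plan is to reduce both estimates to the Rademacher case via the symmetrization inequalities (\ref{eqn:symmetrization}), then to pass between the $L^p$- and $L^2$-norms of the Rademacher average by Kahane's inequalities (\ref{eqn:Kahane}), and finally to invoke the definition of type (resp.\ cotype) pathwise in the underlying randomness.

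First I would prove the type $s$ estimate. By (\ref{eqn:symmetrization}),
\[
\Big(\Ex\Big\|\sum_i \xi_i\Big\|_X^p\Big)^{\frac{1}{p}} \le 2\Big(\Ex_r\Ex\Big\|\sum_i r_i\xi_i\Big\|_X^p\Big)^{\frac{1}{p}},
\]
and by Fubini's theorem the right-hand side equals $2\big(\Ex\,\Ex_r\|\sum_i r_i\xi_i\|_X^p\big)^{1/p}$. For each fixed realization of $(\xi_i)$, Kahane's inequalities (\ref{eqn:Kahane}) give $\big(\Ex_r\|\sum_i r_i\xi_i\|_X^p\big)^{1/p} \le \kappa_{p,2}\big(\Ex_r\|\sum_i r_i\xi_i\|_X^2\big)^{1/2}$, and the type $s$ property of $X$, applied with the vectors $\xi_i(\cdot)$ frozen, bounds this further by $\kappa_{p,2}\,c_{s,X}\big(\sum_i\|\xi_i\|_X^s\big)^{1/s}$. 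Raising to the $p$-th power, integrating over the probability space on which $(\xi_i)$ lives, and taking $p$-th roots yields the first assertion.

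For the cotype $s$ estimate I would run the same chain in the opposite direction: pathwise, the cotype $s$ property gives $\big(\sum_i\|\xi_i\|_X^s\big)^{1/s} \le c_{s,X}\big(\Ex_r\|\sum_i r_i\xi_i\|_X^2\big)^{1/2}$; Kahane's inequality (\ref{eqn:Kahane}) with the exponents $2$ and $p$ interchanged gives $\big(\Ex_r\|\sum_i r_i\xi_i\|_X^2\big)^{1/2} \le \kappa_{2,p}\big(\Ex_r\|\sum_i r_i\xi_i\|_X^p\big)^{1/p}$; taking $L^p$-norms in the underlying probability space, applying Fubini, and then using (\ref{eqn:symmetrization}) once more produces the bound by $2\kappa_{2,p}c_{s,X}\big(\Ex\|\sum_i\xi_i\|_X^p\big)^{1/p}$.

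I do not expect a genuine obstacle here; the argument is a routine assembly of standard tools. The only point that needs a little care is that type and cotype are defined in terms of the $L^2(\Om_r)$-norm of the Rademacher average, so Kahane's inequalities are precisely what is needed to bridge to the $L^p$-norm appearing in the statement, and one must apply Fubini to interchange the Rademacher expectation with the expectation over $(\xi_i)$ before invoking the type/cotype bound pathwise.
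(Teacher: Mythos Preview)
Your proposal is correct and follows essentially the same approach as the paper: symmetrization, then Kahane's inequalities to pass between the $L^p$- and $L^2$-norms of the Rademacher average, and finally the type (resp.\ cotype) inequality applied pathwise. The paper's proof is simply a more compressed presentation of exactly this chain.
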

\begin{pf}
Suppose $X$ has type $s$. By symmetrization, Kahane's inequalities and
the type $s$ inequality we obtain
\begin{eqnarray*}
\biggl(\Ex \biggl\|\sum_i \xi_i
\biggr\|_X^p \biggr)^{{1}/{p}} & \simeq& \biggl(\Ex
\Ex_r \biggl\|\sum_i r_i
\xi_i \biggr\|_X^p \biggr)^{{1}/{p}}
\nonumber
\\
& \simeq&\hspace*{-4pt}_{p} \biggl(\Ex \biggl(\Ex_r \biggl\|\sum
_i r_i \xi_i \biggr\|
_X^2 \biggr)^{p/2} \biggr)^{{1}/{p}}
\lesssim_{s,X} \biggl(\Ex \biggl(\sum_i
\| \xi_i\|_X^s \biggr)^{p/s}
\biggr)^{{1}/{p}}.
\end{eqnarray*}
The second assertion is proved similarly.
\end{pf}
The following result is the key to the Rosenthal-type inequalities in
the cases where $2\leq p,q<\infty$.
%
\begin{theorem}
\label{thm:2pqqp} Suppose that $2\leq p,q<\infty$. If $(\xi_i)$ is a
finite sequence of independent, mean-zero $L^q(S)$-valued random
variables, then
%
\begin{eqnarray}
\label{eqn:2pqqp} && \biggl(\E \biggl\|\sum_i
\xi_i \biggr\|_{L^q(S)}^p \biggr)^{{1}/{p}}
\nonumber
\\[-8pt]
\\[-8pt]
\nonumber
&&\qquad \simeq_{p,q} \max \biggl\{ \biggl\| \biggl(\sum
_i \E|\xi_i|^2
\biggr)^{1/2} \biggr\|_{L^q(S)}, \biggl(\E \biggl(\sum
_i \|\xi_i\| _{L^q(S)}^q
\biggr)^{{p}/{q}} \biggr)^{{1}/{p}} \biggr\}.
\end{eqnarray}
\end{theorem}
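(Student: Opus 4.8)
The plan is to get the lower estimate ``$\gtrsim_{p,q}$'' straight from Lemmas~\ref{lem:squareFunEst2} and~\ref{lem:typeCotypeEst}, and to get the upper estimate by reducing, via symmetrisation and the Khintchine inequalities, to a one-sided Rosenthal estimate for sums of independent \emph{positive} elements of $L^{q/2}(S)$, which I would then prove by a self-improving (absorption) argument based on two elementary pointwise inequalities for positive functions. The lower bound is immediate: since $q\geq 2$, $L^q(S)$ has cotype $q$, so the cotype part of Lemma~\ref{lem:typeCotypeEst} with $s=q$ gives $\big(\E\big(\sum_i\|\xi_i\|_{L^q(S)}^q\big)^{p/q}\big)^{1/p}\lesssim_{p,q}\big(\E\|\sum_i\xi_i\|_{L^q(S)}^p\big)^{1/p}$, while the second assertion of Lemma~\ref{lem:squareFunEst2} (valid as $2\leq p,q<\infty$) gives $\big\|\big(\sum_i\E|\xi_i|^2\big)^{1/2}\big\|_{L^q(S)}\lesssim_{p,q}\big(\E\|\sum_i\xi_i\|_{L^q(S)}^p\big)^{1/p}$; taking the maximum yields ``$\gtrsim_{p,q}$'' in (\ref{eqn:2pqqp}).

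For the upper bound I would first apply symmetrisation (\ref{eqn:symmetrization}), then Kahane's inequalities (\ref{eqn:Kahane}) conditionally on $(\xi_i)$, then the Khintchine inequalities (\ref{eqn:KhiComLp}) in $L^q(S)$, to obtain
\[
\Big(\E\Big\|\sum_i\xi_i\Big\|_{L^q(S)}^p\Big)^{1/p}\simeq_{p,q}\Big(\E\Big\|\sum_i|\xi_i|^2\Big\|_{L^{q/2}(S)}^{p/2}\Big)^{1/p}.
\]
Writing $\eta_i:=|\xi_i|^2$, $r:=q/2\geq 1$ and $\pi:=p/2\geq 1$, and using $\|\eta_i\|_{L^{q/2}(S)}^{q/2}=\|\xi_i\|_{L^q(S)}^q$ together with $\|\sum_i\E\eta_i\|_{L^{q/2}(S)}=\|(\sum_i\E|\xi_i|^2)^{1/2}\|_{L^q(S)}^2$, it then suffices to prove, for every finite sequence $(\eta_i)$ of independent \emph{positive} elements of $L^r(S)$,
\begin{equation*}
\Big(\E\Big\|\sum_i\eta_i\Big\|_{L^r(S)}^{\pi}\Big)^{1/\pi}\lesssim_{\pi,r}\Big\|\sum_i\E\eta_i\Big\|_{L^r(S)}+\Big(\E\Big(\sum_i\|\eta_i\|_{L^r(S)}^{r}\Big)^{\pi/r}\Big)^{1/\pi};
\tag{$\star$}
\end{equation*}
undoing the substitution, $(\star)$ gives precisely the upper estimate in (\ref{eqn:2pqqp}).

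To prove $(\star)$, decompose $\eta_i=\E\eta_i+\zeta_i$ with $\zeta_i:=\eta_i-\E\eta_i$; the triangle inequality in $L^{\pi}(\Om;L^r(S))$ handles the $\E\eta_i$ part. For the mean-zero part, symmetrisation, Kahane and Khintchine (now legitimate, as $\zeta_i$ is mean-zero) give $\E\|\sum_i\zeta_i\|_{L^r(S)}^{\pi}\simeq_{\pi,r}\E\|(\sum_i\zeta_i^2)^{1/2}\|_{L^r(S)}^{\pi}$, and from $|\zeta_i|\leq\eta_i+\E\eta_i$ and $(\sum_i a_i^2)^{1/2}\leq\sum_i a_i$ one bounds $(\sum_i\zeta_i^2)^{1/2}\leq\sqrt2\,(\sum_i\eta_i^2)^{1/2}+\sqrt2\sum_i\E\eta_i$ pointwise. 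Now the two key pointwise inequalities for positive functions enter: $\sum_i\eta_i^2\leq(\max_j\eta_j)\sum_i\eta_i$, whence Hölder in $S$ gives $\|(\sum_i\eta_i^2)^{1/2}\|_{L^r(S)}\leq\|\max_j\eta_j\|_{L^r(S)}^{1/2}\,\|\sum_i\eta_i\|_{L^r(S)}^{1/2}$; and $(\max_j\eta_j)^r\leq\sum_j\eta_j^r$, whence $\|\max_j\eta_j\|_{L^r(S)}\leq(\sum_j\|\eta_j\|_{L^r(S)}^{r})^{1/r}$. Combining these with Cauchy--Schwarz in $\Om$ and writing $A:=(\E\|\sum_i\eta_i\|_{L^r(S)}^{\pi})^{1/\pi}$, $M:=(\E(\sum_j\|\eta_j\|_{L^r(S)}^r)^{\pi/r})^{1/\pi}$, $\mu:=\|\sum_i\E\eta_i\|_{L^r(S)}$, and using $A\lesssim_{\pi}\mu+(\E\|\sum_i\zeta_i\|_{L^r(S)}^{\pi})^{1/\pi}$, one arrives at the self-referential bound $A^{\pi}\lesssim_{\pi,r}\mu^{\pi}+M^{\pi/2}A^{\pi/2}$; since $A<\infty$ a priori (because each $\xi_i$ lies in $L^p(\Om;L^q(S))$), Young's inequality absorbs $M^{\pi/2}A^{\pi/2}$ into the left-hand side and yields $A\lesssim_{\pi,r}\mu+M$, which is $(\star)$. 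The main obstacle is this upper bound: a naive termwise estimate of $(\star)$—for instance applying the scalar Rosenthal inequality pointwise in $S$ after Minkowski's integral inequality—loses a genuine factor (as one already sees in the regime $p<q$), so the crux is to set up and close the self-referential estimate for $A$, with the positivity of the $\eta_i$ exploited exactly through the two pointwise bounds $\sum_i\eta_i^2\leq(\max_j\eta_j)\sum_i\eta_i$ and $(\max_j\eta_j)^r\leq\sum_j\eta_j^r$.
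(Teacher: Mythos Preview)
Your argument is correct, and the lower bound is handled exactly as in the paper. For the upper bound, however, your route differs from the paper's proof of this theorem. The paper first reduces (as you do) to estimating $(\E\|(\sum_i|\xi_i|^2)^{1/2}\|_{L^q}^p)^{1/p}$, then subtracts the mean $\E|\xi_i|^2$ and splits into two cases: for $2\leq q\leq 4$ it uses that $L^{q/2}$ has type $q/2$ together with Lemma~\ref{lem:typeCotypeEst}, while for $q>4$ it applies Khintchine once more and a three-fold H\"older interpolation (with $\tfrac14=\tfrac{\theta}{2}+\tfrac{1-\theta}{q}$) to reach the inequality $a^2\lesssim a^{2\theta}b^{2(1-\theta)}+c^2$, which is then solved.

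Your approach instead exploits positivity via the two pointwise bounds $\sum_i\eta_i^2\leq(\max_j\eta_j)\sum_i\eta_i$ and $(\max_j\eta_j)^r\leq\sum_j\eta_j^r$, arriving directly at a quadratic self-referential estimate $A\lesssim\mu+M^{1/2}A^{1/2}$ without any case distinction. This is in fact precisely the commutative specialisation of the paper's proof of the \emph{noncommutative} analogue (Theorem~\ref{thm:2pqqpNC}), where the factorisation $\sum_i|\xi_i|^4=x^*y^*yx$ with $x=\mathrm{col}(|\xi_i|)$ and $y=\mathrm{diag}(|\xi_i|)$ plays the role of your first pointwise bound; the paper itself remarks after Theorem~\ref{thm:2pqqpNC} that this gives a different proof even when $\cM$ is commutative. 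Your version is arguably cleaner (no case split), while the paper's commutative proof keeps the $q\leq4$ case entirely elementary via the type inequality.

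One small point: your justification that $A<\infty$ a priori ``since each $\xi_i$ lies in $L^p(\Omega;L^q(S))$'' is not given by the hypotheses. The clean fix is to note that for a \emph{finite} sequence, if $M<\infty$ (which one may assume, else the upper bound is vacuous) then the triangle inequality in $L^{q/2}(S)$ plus H\"older in $\ell^n$ give $A\leq n^{1/2-1/q}M<\infty$, so the absorption step is legitimate.
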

\begin{pf}
We first prove the estimate $\gtrsim_{p,q}$. By Lemma~\ref{lem:squareFunEst2},
\[
\biggl\| \biggl(\sum_i \E|\xi_i|^2
\biggr)^{{1}/{2}} \biggr\|_{L^q(S)} \lesssim _{p,q} \biggl(\E \biggl\|
\sum_i \xi_i \biggr\|_{L^q(S)}^p
\biggr)^{{1}/{p}}.
\]
Moreover, since $L^q(S)$ has cotype $q$ Lemma~\ref{lem:typeCotypeEst} implies
\begin{eqnarray*}
\biggl(\E \biggl(\sum_i \|\xi_i
\|_{L^q(S)}^q \biggr)^{{p}/{q}} \biggr)^{{1}/{p}} &
\lesssim_{p,q} \biggl(\E \biggl\|\sum_i
\xi_i \biggr\| _{L^q(S)}^p \biggr)^{{1}/{p}}.
\end{eqnarray*}
We now prove the reverse inequality in (\ref{eqn:2pqqp}). By
symmetrization and the Khintchine inequalities (\ref{eqn:KhiComLp}),
%
\begin{equation}
\label{eqn:estIns} \biggl(\E \biggl\|\sum_i
\xi_i \biggr\|_{L^q(S)}^p \biggr)^{{1}/{p}} \simeq
_{p,q} \biggl(\E \biggl\| \biggl(\sum_i |
\xi_i|^2 \biggr)^{1/2}\biggr \| _{L^q(S)}^p
\biggr)^{{1}/{p}}.
\end{equation}
By the triangle inequality, we obtain
%
\begin{eqnarray}
\label{eqn:simpleEstFors} && \biggl(\E \biggl\| \biggl(\sum_i |
\xi_i|^2 \biggr)^{{1}/{2}}\biggr \| _{L^q(S)}^p
\biggr)^{{1}/{p}}
\nonumber
\\
&&\qquad = \biggl(\E \biggl\|\sum_i |\xi_i|^2
\biggr\|_{L^{q/2}(S)}^{p/2} \biggr)^{{1}/{p}}
\\
&&\qquad \leq \biggl( \biggl(\E\biggl \|\sum_i |
\xi_i|^2 - \E|\xi_i|^2 \biggr\|
_{L^{q/2}(S)}^{p/2} \biggr)^{2/p} + \biggl\|\sum
_i \E |\xi _i|^2
\biggr\|_{L^{{q}/{2}}(S)} \biggr)^{{1}/{2}}.\nonumber
\end{eqnarray}
Suppose first that $q\leq4$. Then $L^{{q}/{2}}(S)$ has type
$\frac
{q}{2}$, so by Lemma~\ref{lem:typeCotypeEst},
\begin{eqnarray*}
&& \biggl(\E \biggl\|\sum_i |\xi_i|^2
- \E|\xi_i|^2 \biggr\|_{L^{q/2}(S)}^{p/2}
\biggr)^{2/p}
\\
& &\qquad\lesssim_{p,q} \biggl(\E \biggl(\sum_i
\bigl\| |\xi_i|^2 - \E|\xi _i|^2\bigr\|
_{L^{{q}/{2}}(S)}^{{q}/{2}} \biggr)^{{p}/{q}} \biggr)^{2/p}
\\
&&\qquad \leq \biggl(\E \biggl(\sum_i \|
\xi_i\|_{L^q(S)}^{q} \biggr)^{p/q}
\biggr)^{2/p} + \biggl(\sum_i \bigl\|\E|
\xi_i|^2\bigr\|_{L^{q/2}(S)}^{{q}/{2}}
\biggr)^{2/q}
\\
&&\qquad \leq \biggl(\E \biggl(\sum_i \|
\xi_i\|_{L^q(S)}^{q} \biggr)^{p/q}
\biggr)^{2/p} + \E \biggl(\sum_i \bigl\| |
\xi_i|^2\bigr\|_{L^{q/2}(S)}^{{q}/{2}}
\biggr)^{2/q}
\\
& &\qquad\leq2 \biggl(\E \biggl(\sum_i \|
\xi_i\|_{L^q(S)}^{q} \biggr)^{p/q}
\biggr)^{2/p},
\end{eqnarray*}
where in the final two steps we apply Jensen's inequality, using that
the $\ell^{{q}/{2}}(L^{{q}/{2}}(S))$-norm is convex, and
subsequently use H\"{o}lder's inequality.

Suppose now that $q>4$. By applying symmetrization and the Khintchine
inequalities (\ref{eqn:KhiComLp}), we find
%
\begin{eqnarray}
\label{eqn:AuxEstq2sNo1} & &\biggl(\E \biggl\|\sum_i |
\xi_i|^2 - \E|\xi_i|^2
\biggr\|_{L^{q/2}(S)}^{p/2} \biggr)^{2/p}
\nonumber
\\
& &\qquad\simeq_{p,q} \biggl(\E \biggl\| \biggl(\sum_i
\bigl| |\xi_i|^2 - \E|\xi _i|^2\bigr|^2
\biggr)^{{1}/{2}} \biggr\|_{L^{{q}/{2}}(S)}^{p/2} \biggr)^{2/p}
\nonumber
\\[-8pt]
\\[-8pt]
\nonumber
&&\qquad \leq \biggl(\E \biggl\| \biggl(\sum_i |
\xi_i|^{4} \biggr)^{1/2} \biggr\| _{L^{{q}/{2}}(S)}^{p/2}
\biggr)^{2/p} + \biggl\| \biggl(\sum_i \bigl|\E|
\xi_i|^2\bigr|^2 \biggr)^{{1}/{2}}
\biggr\|_{L^{{q}/{2}}(S)}
\\
& &\qquad\leq \biggl(\E \biggl\| \biggl(\sum_i |
\xi_i|^{4} \biggr)^{1/4} \biggr\| _{L^{q}(S)}^{p}
\biggr)^{2/p} +\biggl \|\sum_i \E|
\xi_i|^2 \biggr\| _{L^{{q}/{2}}(S)}.\nonumber
\end{eqnarray}
Since $q>4$, there is some $0<\theta<\frac{1}{2}$ such that $\frac
{1}{4} = \frac{\theta}{2} + \frac{1-\theta}{q}$. By applying H\"
{o}lder's inequality three times (the second and third time with
parameters $\frac{1}{q}=\frac{\theta}{q} + \frac{1-\theta}{q}$ and
$\frac{1}{p}=\frac{\theta}{p} + \frac{1-\theta}{p}$, resp.),
we obtain
%
\begin{eqnarray}
\label{eqn:AuxEstq2sNo2} && \biggl(\E \biggl\| \biggl(\sum_i |
\xi_i|^{4} \biggr)^{1/4}\biggr \| _{L^{q}(S)}^{p}
\biggr)^{2/p}
\nonumber
\\
&&\qquad \leq \biggl(\E \biggl\| \biggl(\sum_i |
\xi_i|^{2} \biggr)^{{\theta}/{2}} \biggl(\sum
_i |\xi_i|^{q}
\biggr)^{{(1-\theta)}/{q}} \biggr\| _{L^{q}(S)}^{p} \biggr)^{2/p}
\nonumber
\\
&&\qquad \leq \biggl(\E \biggl( \biggl\| \biggl(\sum_i |
\xi_i|^{2} \biggr)^{{\theta}/{2}}\biggr \|_{L^{{q}/{\theta}}(S)} \biggl\|
\biggl(\sum_i |\xi _i|^{q}
\biggr)^{{(1-\theta)}/{q}} \biggr\|_{L^{{q}/{(1-\theta)}}(S)} \biggr)^{p}
\biggr)^{2/p}
\nonumber
\\[-8pt]
\\[-8pt]
\nonumber
&&\qquad \leq \biggl(\E \biggl\| \biggl(\sum_i |
\xi_i|^{2} \biggr)^{{\theta}/{2}} \biggr\|_{L^{{q}/{\theta}}(S)}^{{p}/{\theta}}
\biggr)^{{2\theta}/{p}}\\
&&\hspace*{36pt}{}\times \biggl(\E \biggl\| \biggl(\sum_i
|\xi_i|^{q} \biggr)^{{(1-\theta)}/{q}}\biggr \|_{L^{{q}/{(1-\theta)}}(S)}^
{{p}/{(1-\theta)}}
\biggr)^{{2(1-\theta)}/{p}}
\nonumber
\\
&&\qquad = \biggl(\E\biggl \| \biggl(\sum_i |
\xi_i|^{2} \biggr)^{{1}/{2}} \biggr\| _{L^{q}(S)}^{p}
\biggr)^{{2\theta}/{p}} \biggl(\E \biggl\| \biggl(\sum_i
|\xi _i|^{q} \biggr)^{{1}/{q}} \biggr\|_{L^{q}(S)}^{p}
\biggr)^{{2(1-\theta)}/{p}}.\nonumber
\end{eqnarray}
Combining (\ref{eqn:simpleEstFors}), (\ref{eqn:AuxEstq2sNo1}) and
(\ref
{eqn:AuxEstq2sNo2}), we arrive at the inequality
\[
a^2 \lesssim_{p,q} a^{2\theta}b^{2(1-\theta)} +
c^2,
\]
where we set $a = (\E\|(\sum_i |\xi_i|^2)^{{1}/{2}}\|
_{L^q(S)}^p)^{{1}/{p}}$, $b=(\Ex(\sum_i \|\xi_i\|
_{L^q(S)}^q)^{p/q})^{{1}/{p}}$ and $c=\|(\sum_i \E|\xi_i|^2)
^{1/2}\|
_{L^q(S)}$. Notice that if $a\leq b$ then the claim immediately follows
from (\ref{eqn:estIns}). Hence, we may assume $a>b$. Since $0<2\theta
<1$ we then have
\[
a^{2\theta}b^{2(1-\theta)} = b^2 \biggl(\frac{a}{b}
\biggr)^{2\theta} \leq ab.
\]
Thus, we obtain the inequality
\[
a^2 \lesssim_{p,q} ab + c^2.
\]
Solving this quadratic inequality, we find that $a\lesssim_{p,q} \max
\{
b,c\}$. That is,
\begin{eqnarray*}
&& \biggl(\E \biggl\| \biggl(\sum_i |
\xi_i|^2 \biggr)^{{1}/{2}}\biggr \| _{L^q(S)}^p
\biggr)^{{1}/{p}}
\\
&&\qquad \lesssim_{p,q} \max \biggl\{ \biggl(\Ex \biggl(\sum
_i \|\xi_i\| _{L^q(S)}^q
\biggr)^{{p}/{q}} \biggr)^{{1}/{p}},\biggl \| \biggl(\sum
_i \E|\xi _i|^2
\biggr)^{{1}/{2}}\biggr \|_{L^q(S)} \biggr\}.
\end{eqnarray*}
The result now follows from (\ref{eqn:estIns}). This completes the proof.
\epf
\end{pf}
Recall the spaces $s_{p,q}$ defined in the statement of Theorem~\ref
{thm:summaryRosIntro}. In the proof of this result, we shall make use
of the fact that for any $1<p,q<\infty$
%
\begin{equation}
\label{eqn:dualspq} (s_{p,q})^* = s_{p',q'},\qquad \biggl(
\frac{1}{p} + \frac{1}{p'} = 1, \frac
{1}{q} +
\frac{1}{q'} = 1\biggr)
\end{equation}
holds isometrically. This follows from the following general principle.
Suppose that $X$ and $Y$ are two Banach spaces which are continuously
embedded in some Hausdorff topological vector space and assume moreover
that $X\cap Y$ is dense in both $X$ and $Y$. Then we have
%
\begin{equation}
\label{eqn:sumIntersectionDuality} (X\cap Y)^* = X^* + Y^*,\qquad (X+Y)^* = X^*\cap Y^*
\end{equation}
isometrically. The duality brackets under these identifications are
given by
\[
\bigl\langle x,x^*\bigr\rangle= \bigl\langle x,x^*|_{X\cap Y}\bigr\rangle\qquad
\bigl(x^* \in X^* + Y^*\bigr),
\]
where $x^*|_{X\cap Y}$ denotes the restriction of $x^*$ to $X\cap Y$, and
\[
\bigl\langle x, x^*\bigr\rangle= \bigl\langle y,x^*\bigr\rangle+ \bigl\langle
z,x^*\bigr\rangle\qquad \bigl(x^* \in X^*\cap Y^*, x=y+z \in X+Y\bigr),
\]
respectively; see, for example, \cite{KPS82}, Theorem I.3.1. In our case of
interest, the spaces $S_{q}$, $D_{p,q}$ and $D_{q,q}$ have dense
intersection and, therefore, the duality of these individual spaces
imply together with (\ref{eqn:sumIntersectionDuality}) that (\ref
{eqn:dualspq}) holds, with associated duality bracket
\[
\bigl\langle(f_i),(g_i)\bigr\rangle= \sum
_i \E\int f_i g_i \,d\sigma.
\]
We need two more ingredients for the proof of Theorem~\ref
{thm:summaryRosIntro}. The first are the hypercontractive-type
inequalities due to Hoffmann--J{\o}rgensen \cite{Hof74} (see also
\cite
{KwW92,LeT91} for a proof yielding a constant of optimal order)
%
\begin{equation}
\label{eqn:HJTKSIntro} \qquad\biggl(\E \biggl\|\sum_i
\xi_i \biggr\|_X^p \biggr)^{{1}/{p}} \lesssim
\frac{p}{\log2p} \biggl(\E \biggl\|\sum_i
\xi_i \biggr\|_X + \Bigl(\E\max_i \|
\xi_i\|_X^p \Bigr)^{{1}/{p}} \biggr),
\end{equation}
valid for any $1\leq p<\infty$ and any sequence $(\xi_i)$ of
independent, mean-zero random variables taking values in a Banach space
$X$. Finally, let us recall the Rosenthal inequalities for a sequence
$(f_i)$ of positive scalar-valued random variables: if $1\leq p<\infty
$, then
%
\begin{equation}
\label{eqn:RosPos} \biggl(\E\biggl |\sum_{i}
f_i \biggr|^p \biggr)^{{1}/{p}} \lesssim_p
\max \biggl\{ \biggl(\sum_i\E|f_i|^p
\biggr)^{{1}/{p}}, \sum_i
\E|f_i| \biggr\}.
\end{equation}
We are now ready to prove our first main result.
\begin{pf*}{Proof of Theorem~\ref{thm:summaryRosIntro}}
Let us note that the inequalities ``$\gtrsim_{p,q}$'' in (\ref
{eqn:summaryRos}) follow by duality once the reverse inequalities have
been established. Indeed, if $(\eta_i)$ is a finite sequence in
$s_{p',q'}$ of norm $1$, then
%
\begin{eqnarray}
\label{eqn:dualArgInd1} \bigl\langle(\xi_i),(\eta_i)\bigr
\rangle& =& \sum_i \E\int(\xi_i\eta
_i)\,d\sigma
\nonumber
\\
& =& \sum_i \E\int\bigl(\xi_i \bigl(
\E(\eta_i|\xi_i) - \E(\eta _i)\bigr)\bigr)\,d
\sigma
\nonumber
\\
& =& \sum_{i,j} \E\int\bigl(\xi_i\bigl(
\E(\eta_j|\xi_j) - \E(\eta _j)\bigr)\bigr)\,d
\sigma
\\
& =& \E\int \biggl(\sum_i\xi_i
\biggr) \biggl(\sum_j\E(\eta_j|
\xi_j) - \E(\eta _j) \biggr)\,d\sigma
\nonumber
\\
& \leq &\biggl(\E \biggl\|\sum_i \xi_i
\biggr\|_{L^q(S)}^p \biggr)^{{1}/{p}} \biggl(\E\biggl \|\sum
_j \E(\eta_j|\xi_j) - \E(
\eta_j) \biggr\| _{L^{q'}(S)}^{p'} \biggr)^{{1}/{p'}}.\nonumber
\end{eqnarray}
Since the elements $\E(\eta_j|\xi_j) - \E(\eta_j)$ are independent and
mean-zero,
%
\begin{eqnarray}
\label{eqn:dualArgInd2} \bigl\langle(\xi_i),(\eta_i)\bigr
\rangle & \lesssim&\hspace*{-2pt}_{p',q'} \biggl(\E \biggl\|\sum_i
\xi_i \biggr\| _{L^q(S)}^p \biggr)^{{1}/{p}} \bigl\|
\bigl(\E(\eta_j|\xi_j)-\E(\eta_j)\bigr)
\bigr\|_{s_{p',q'}}
\nonumber
\\[-8pt]
\\[-8pt]
\nonumber
& \leq&2 \biggl(\E \biggl\|\sum_i \xi_i
\biggr\|_{L^q(S)}^p \biggr)^{{1}/{p}}.
\end{eqnarray}
By (\ref{eqn:dualspq}), the claim follows by taking the supremum over
all $(\eta_i)$ as above. We now prove the estimates $\lesssim_{p,q}$
case by case.

\emph{Case $2\leq q\leq p<\infty$}: Recall that Theorem~\ref{thm:2pqqp}
says that
\begin{eqnarray*}
&& \biggl(\E \biggl\|\sum_i \xi_i
\biggr\|_{L^q(S)}^p \biggr)^{{1}/{p}}
\\
&&\qquad \lesssim_{p,q} \max \biggl\{ \biggl\| \biggl(\sum
_i \E|\xi_i|^2
\biggr)^{1/2} \biggr\|_{L^q(S)}, \biggl(\E \biggl(\sum
_i \|\xi_i\| _{L^q(S)}^q
\biggr)^{{p}/{q}} \biggr)^{{1}/{p}} \biggr\}.
\end{eqnarray*}
Since $q\leq p$, applying (\ref{eqn:RosPos}) with $f_i = \|\xi\|
_{L^q(S)}^q$ yields
\begin{eqnarray*}
&&\biggl(\E \biggl(\sum_i \|\xi_i
\|_{L^q(S)}^q \biggr)^{{p}/{q}} \biggr)^{{1}/{p}}
\nonumber
\\[-8pt]
\\[-8pt]
\nonumber
&&\qquad\lesssim_{p,q} \max \biggl\{ \biggl(\sum_i
\Ex\|\xi_i\| _{L^q(S)}^p \biggr)^{{1}/{p}},
\biggl(\sum_i \Ex\|\xi_i
\|_{L^q(S)}^q \biggr)^{1/q} \biggr\}.
\end{eqnarray*}

\emph{Case $2\leq p\leq q<\infty$}: If $p\leq q$, the contractive
embeddings $L^q(\Om)\subset L^p(\Om)$ and $\ell^p\subset\ell^q$ imply
%
\begin{equation}
\label{eqn:littlesInf0} \biggl(\Ex \biggl(\sum_i \|
\xi_i\|_{L^q(S)}^q \biggr)^{{p}/{q}}
\biggr)^{{1}/{p}} \leq \biggl(\sum_i \Ex\|
\xi_i\|_{L^q(S)}^q \biggr)^{{1}/{q}}
\end{equation}
and
%
\begin{equation}
\label{eqn:littlesInf1} \biggl(\Ex \biggl(\sum_i \|
\xi_i\|_{L^q(S)}^q \biggr)^{{p}/{q}}
\biggr)^{{1}/{p}} \leq \biggl(\sum_i \Ex\|
\xi_i\|_{L^q(S)}^p \biggr)^{{1}/{p}}.
\end{equation}
By the triangle inequality,
\[
\biggl(\Ex \biggl(\sum_i \|\xi_i
\|_{L^q(S)}^q \biggr)^{{p}/{q}} \biggr)^{{1}/{p}} \leq
\bigl\|(\xi_i)\bigr\|_{D_{p,q}+D_{q,q}}.
\]
The asserted estimate now follows from Theorem~\ref{thm:2pqqp}.

\emph{Case $1\leq p\leq q\leq2$}: Let $(\eta_{i}) \in S_q$, $(\theta
_i) \in D_{p,q}$ and $(\kappa_i) \in D_{q,q}$ be
such that $\xi_i = \eta_{i} + \theta_i + \kappa_i$. Then
\[
\xi_i = \E(\eta_{i}|\xi_i) - \E(
\eta_{i}) + \E(\theta_i|\xi_i) - \E (
\theta_i) + \E(\kappa_i|\xi_i) - \E(
\kappa_i).
\]
By Lemma~\ref{lem:squareFunEst2},
%
\begin{eqnarray}
\label{eqn:1pq2EstPf} &&\biggl(\E \biggl\|\sum_i \E(
\eta_{i}|\xi_i) - \E(\eta_{i}) \biggr\|
_{L^q(S)}^p \biggr)^{{1}/{p}}\nonumber \\
&&\qquad \lesssim_{p,q} \biggl\|
\biggl(\sum_i \E\bigl|\E (\eta_{i}|
\xi_i) - \E(\eta_{i})\bigr|^2
\biggr)^{{1}/{2}}\biggr \|_{L^q(S)}
\\
&&\qquad \leq 2\biggl \| \biggl(\sum_i \E|
\eta_{i}|^2 \biggr)^{{1}/{2}}\biggr \|_{L^q(S)},\nonumber
\end{eqnarray}
where the final step follows from the triangle inequality and Jensen's
inequality. Now apply Lemma~\ref{lem:typeCotypeEst} [using that
$L^q(S)$ has type $q$], (\ref{eqn:littlesInf1}) and Jensen's inequality
to find
\begin{eqnarray*}
&&\biggl(\E \biggl\|\sum_i \E(\theta_i|
\xi_i) - \E(\theta_i) \biggr\| _{L^q(S)}^p
\biggr)^{{1}/{p}} \\
&&\qquad \lesssim_{p,q} \biggl(\Ex \biggl(\sum
_i \bigl\|\E(\theta_i|\xi_i) - \E (
\theta _i)\bigr\|_{L^q(S)}^q \biggr)^{{p}/{q}}
\biggr)^{{1}/{p}}
\\
&&\qquad \leq \biggl(\sum_i \E\bigl\|\E(
\theta_i|\xi_i) - \E(\theta_i)\bigr\|
_{L^q(S)}^p \biggr)^{{1}/{p}}
\\
&&\qquad \leq 2 \biggl(\sum_i \E\|\theta_i
\|_{L^q(S)}^p \biggr)^{{1}/{p}}.
\end{eqnarray*}
Similarly, Lemma~\ref{lem:typeCotypeEst}, (\ref{eqn:littlesInf0}) and
Jensen's inequality yield
\[
\biggl(\E \biggl\|\sum_i \E(\kappa_i|
\xi_i) - \E(\kappa_i)\biggr \| _{L^q(S)}^p
\biggr)^{{1}/{p}} \lesssim_{p,q} \biggl(\sum
_i \E\| \kappa_i\| _{L^q(S)}^q
\biggr)^{{1}/{q}}.
\]
The asserted estimate now follows by the triangle inequality.

\emph{Case $1\leq q\leq p\leq2$}: The proof is very similar to the
previous case. Let $(\eta_i) \in S_q$ and $(\theta_i) \in D_{p,q}\cap
D_{q,q}$ be such that $\xi_i = \eta_i + \theta_i$, then
\[
\xi_i = \E(\eta_{i}|\xi_i) - \E(
\eta_{i}) + \E(\theta_i|\xi_i) - \E (
\theta_i).
\]
By the same argument as in (\ref{eqn:1pq2EstPf}),
\[
\biggl(\E \biggl\|\sum_i \E(\eta_{i}|
\xi_i) - \E(\eta_{i}) \biggr\| _{L^q(S)}^p
\biggr)^{{1}/{p}} \lesssim_{p,q} \biggl\| \biggl(\sum
_i \E|\eta _{i}|^2
\biggr)^{{1}/{2}}\biggr \|_{L^q(S)}.
\]
Moreover, successively applying Lemma~\ref{lem:typeCotypeEst}, the
Rosenthal inequality (\ref{eqn:RosPos}) (using that $q\leq p$) and
Jensen's inequality yields
\begin{eqnarray*}
&& \biggl(\E\biggl \|\sum_i \E(\theta_i|
\xi_i) - \E(\theta_i) \biggr\| _{L^q(S)}^p
\biggr)^{{1}/{p}}
\\
&&\qquad \lesssim_{p,q} \biggl(\Ex \biggl(\sum_i
\bigl\|\E(\theta_i|\xi_i) - \E (\theta _i)
\bigr\|_{L^q(S)}^q \biggr)^{{p}/{q}} \biggr)^{{1}/{p}}
\nonumber
\\
&&\qquad \lesssim_{p,q} \max \biggl\{ \biggl(\sum
_i \E\bigl\|\E(\theta_i|\xi_i) - \E (
\theta_i)\bigr\|_{L^q(S)}^p \biggr)^{{1}/{p}},
\\
&&\hspace*{41pt}\quad\qquad \biggl(\sum_i \E\bigl\|\E(\theta_i|
\xi_i) - \E(\theta_i)\bigr\| _{L^q(S)}^q
\biggr)^{{1}/{q}} \biggr\}
\\
&&\qquad \leq2 \max \biggl\{ \biggl(\sum_i \E\|
\theta_i\|_{L^q(S)}^p \biggr)^{{1}/{p}},
\biggl(\sum_i \E\|\theta_i
\|_{L^q(S)}^q \biggr)^{1/q} \biggr\}.
\end{eqnarray*}
The result now follows by the triangle inequality.

\emph{Case $1\leq q\leq2\leq p<\infty$}: By Hoffmann--J{\o}rgensen's
inequality (\ref{eqn:HJTKSIntro}), we have
\[
\biggl(\E\biggl \|\sum_i \xi_i
\biggr\|_{L^q(S)}^p \biggr)^{{1}/{p}} \lesssim _p
\max \biggl\{ \biggl(\E \biggl\|\sum_i
\xi_i \biggr\|_{L^q(S)}^q \biggr)^{1/q}, \Bigl(
\E\max_i \|\xi_i\|_{L^q(S)}^p
\Bigr)^{{1}/{p}} \biggr\}.
\]
By the previous case (with $p=q$), we have
\[
\biggl(\E \biggl\|\sum_i \xi_i
\biggr\|_{L^q(S)}^q \biggr)^{{1}/{q}} \simeq _{p,q} \bigl\|(
\xi_i)\bigr\|_{S_{q}+D_{q,q}}
\]
and obviously
\[
\Bigl(\E\max_i \|\xi_i\|_{L^q(S)}^p
\Bigr)^{{1}/{p}} \leq \biggl(\sum_i \E\|
\xi_i\|_{L^q(S)}^p \biggr)^{{1}/{p}}.
\]

\emph{Case $1\leq p\leq2\leq q<\infty$}: Let $\xi_i = \eta_i +
\theta
_i$ with $(\eta_i)\in S_q\cap D_{q,q}$ and $(\theta_i) \in D_{p,q}$.
Then, $\xi_i = \E(\eta_i|\xi_i) - \E(\eta_i) + \E(\theta_i|\xi
_i) - \E
(\theta_i)$. Since the $\E(\eta_i|\xi_i) - \E(\eta_i)$ are independent
and mean-zero, we can subsequently use H\"{o}lder's inequality and the
already established estimate in the case $p=q\geq2$ to find
\begin{eqnarray*}
&& \biggl(\E \biggl\|\sum_i \E(\eta_i|
\xi_i) - \E(\eta_i) \biggr\| _{L^q(S)}^p
\biggr)^{{1}/{p}}
\\
&&\qquad \leq \biggl(\E \biggl\|\sum_i \E(
\eta_i|\xi_i) - \E(\eta_i) \biggr\|
_{L^q(S)}^q \biggr)^{{1}/{q}}
\\
&&\qquad \lesssim_{p,q} \max \biggl\{ \biggl(\sum
_i \E\bigl\|\E(\eta_i|\xi_i) - \E(\eta
_i)\bigr\|_{L^q(S)}^q \biggr)^{{1}/{q}},
\\
&&\hspace*{40pt}\qquad\quad \biggl \| \biggl(\sum_i \E\bigl|\E(\eta_i|
\xi_i) - \E(\eta_i)\bigr|^2 \biggr)^{1/2}
\biggr\|_{L^q(S)} \biggr\}
\\
&&\qquad \leq2 \max \biggl\{ \biggl(\sum_i \E\|
\eta_i\|_{L^q(S)}^q \biggr)^{1/q}, \biggl\|
\biggl(\sum_i \E|\eta_i|^2
\biggr)^{{1}/{2}}\biggr \| _{L^q(S)} \biggr\}.
\end{eqnarray*}
On the other hand, as $L^q(S)$ has type $2$, it has type $p$ and
therefore Lemma~\ref{lem:typeCotypeEst} implies
\begin{eqnarray*}
&&\biggl(\E \biggl\|\sum_i \E(\theta_i|
\xi_i) - \E(\theta_i)\biggr \| _{L^q(S)}^p
\biggr)^{{1}/{p}} \\
&&\qquad \lesssim_{p,q} \biggl(\sum
_i \E\bigl\| \E (\theta_i|\xi_i) - \E(
\theta_i)\bigr\|_{L^q(S)}^p \biggr)^{{1}/{p}}
\\
&&\qquad \leq 2 \biggl(\sum_i \E\|\theta_i
\|_{L^q(S)}^p \biggr)^{{1}/{p}}.
\end{eqnarray*}
The claimed inequality now follows by the triangle inequality. This
completes the proof.
\end{pf*}

\section{It\^{o}-isomorphisms: Classical $L^q$-spaces}
\label{sec:ItoPoissonClas}

In this section, we present a proof of the It\^{o} isomorphism stated
in Theorem~\ref{thm1.1}. Let us first define the
Poisson stochastic integral.
%
\begin{definition}
Let $(\Om,\cF,\bP)$ be a probability space and let $(E,\cE,\mu)$
be a
measure space. We say that a random measure $N$ on $E$ is a \emph
{Poisson random measure} if the following conditions hold:
\begin{longlist}[(ii)]
\item[(i)] For disjoint $A_1,\ldots,A_n \in\cE$ the random variables
$N(A_1),\ldots,N(A_n)$ are independent and
\[
N \Biggl(\bigcup_{i=1}^n
A_i \Biggr) = \sum_{i=1}^n
N(A_i),
\]
\item[(ii)] For any $A \in\cE$ with $\mu(A)<\infty$ the random variable
$N(A)$ is Poisson distributed with parameter $\mu(A)$.
\end{longlist}
Let $\cE_{\mu} = \{A \in\cE \dvtx  \mu(A)<\infty\}$. Then the random
measure $\tilde{N}$ on $(E,\cE_{\mu},\mu)$ defined by
\[
\tilde{N}(A):= N(A) - \mu(A)\qquad (A \in\cE_{\mu}),
\]
is called the \emph{compensated Poisson random measure} associated
with $N$.
\end{definition}
As is well known, one can always construct a Poisson random measure on
any given $\sigma$-finite measure space $(E,\cE,\mu)$; see, for example,
\cite
{Sat99}.

Throughout, we let $(J,\cJ,\nu)$ be a $\sigma$-finite measure space
and we
fix a Poisson random measure $N$ on $\R_+\ti J$. To arrive at a
satisfactory stochastic integration theory with respect to the
associated compensated Poisson random measure, we need to impose the
following standard compatibility assumption.
%
\begin{assumption}
\label{ass:PoissonFiltration} Throughout we fix a filtration $(\cF
_t)_{t>0}$ such that for any $0\leq s<t<\infty$ and any $A \in\cJ$ the
random variable
$\tilde{N}((s,t]\ti A)$ is $\cF_t$-measurable and independent of $\cF_s$.
\end{assumption}
%
\begin{definition}
\label{def:simplePoissonSI} Fix a Banach space $X$ and let $F\dvtx \Om\ti
\R
_+\ti J\rightarrow X$. We say that $F$ is a \emph{simple, adapted
$X$-valued process} if there is a
finite partition $\pi=\{0=t_1<\cdots<t_{l+1}<\infty\}$ of $\R_+$,
$F_{i,j,k} \in L^{\infty}(\cF_{t_i})$, $x_{i,j,k} \in X$ and disjoint
sets $A_1,\ldots A_m$ in
$\cJ$ satisfying $\nu(A_j)<\infty$ for $i=1,\ldots,l$, $j=1,\ldots,m$
and $k=1,\ldots,n$ such that
%
\begin{equation}
\label{eqn:simpleBanach} F = \sum_{i=1}^l\sum
_{j=1}^m\sum_{k=1}^{n}
F_{i,j,k} \oto\chi_{(t_i,t_{i+1}]}\oto\chi_{A_j} \oto
x_{i,j,k}.
\end{equation}
Given $t>0$ and $B \in\cJ$, we define the \emph{\textup{(}compensated\textup{)} Poisson
stochastic integral of $F$} on $(0,t]\ti B$ with respect to $\tilde
{N}$ by
\[
\int_{(0,t]\ti B} F \,d\tilde{N} = \sum_{i=1}^l
\sum_{j=1}^m\sum
_{k=1}^n F_{i,j,k} \tilde{N}\bigl(
(t_{i}\wedge t,t_{i+1}\wedge t]\times(A_j
\cap B)\bigr) \oto x_{i,j,k},
\]
where $s\wedge t:=\min\{s,t\}$.
\end{definition}
The following elementary observation will be important for our proof.
The upper estimate in (\ref{eqn:PoissonMoments}) in the case $1\leq
p\leq2$ was noted earlier in \cite{BrH09}, Lemma~C.3.
%
\begin{lemma}
\label{lem:PoissonMoments}
Let $N$ be a Poisson distributed random variable with parameter $0\leq
\lambda\leq1$. Then for every $1\leq p<\infty$ there exist constants
$b_p, c_p>0$ such that
%
\begin{equation}
\label{eqn:PoissonMoments} b_p \lambda\leq\E|N-\lambda|^p \leq
c_p \lambda.
\end{equation}
\end{lemma}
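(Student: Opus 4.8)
The plan is to compute directly with the Poisson mass function. Writing
\[
\E|N-\lambda|^p = e^{-\lambda}\sum_{k=0}^{\infty} |k-\lambda|^p \frac{\lambda^k}{k!},
\]
the strategy is to extract the first one or two terms of this series, which already carry the correct order $\lambda$, and to bound the remaining terms crudely using that $0\le\lambda\le 1$ and $p\ge 1$.

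For the upper bound I would split off the $k=0$ summand, which equals $e^{-\lambda}\lambda^p\le\lambda^p\le\lambda$. For $k\ge 1$ we have $|k-\lambda|\le k$, so the tail is at most $e^{-\lambda}\sum_{k\ge 1}k^p\lambda^k/k!$; using $k^p/k!=k^{p-1}/(k-1)!$, pulling out one factor $\lambda$, and reindexing $j=k-1$ gives $e^{-\lambda}\lambda\sum_{j\ge 0}(j+1)^{p-1}\lambda^j/j!\le \lambda\sum_{j\ge 0}(j+1)^{p-1}/j!=:K_p\,\lambda$, where the last series converges since $(j+1)^{p-1}/j!$ decays superexponentially. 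Adding the two contributions yields $\E|N-\lambda|^p\le c_p\lambda$ with $c_p=1+K_p$.

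For the lower bound the key observation is that no single term of the series does the job uniformly over $\lambda\in(0,1]$: the $k=1$ term $e^{-\lambda}|1-\lambda|^p\lambda$ degenerates as $\lambda\to 1$, whereas the $k=0$ term $e^{-\lambda}\lambda^p$ is only of order $\lambda^p=o(\lambda)$ as $\lambda\to 0$ when $p>1$. I would therefore keep both and split into two cases. If $\lambda\le\frac12$, then $|1-\lambda|\ge\frac12$, so $\E|N-\lambda|^p\ge e^{-\lambda}|1-\lambda|^p\lambda\ge e^{-1}2^{-p}\lambda$. If $\lambda>\frac12$, then $\E|N-\lambda|^p\ge e^{-\lambda}\lambda^p=e^{-\lambda}\lambda\cdot\lambda^{p-1}\ge e^{-1}2^{-(p-1)}\lambda\ge e^{-1}2^{-p}\lambda$. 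Hence $b_p=e^{-1}2^{-p}$ works. (Alternatively, one keeps the $k=0$ and $k=1$ terms together and notes that $\lambda^{p-1}+(1-\lambda)^p$ is continuous and strictly positive on the compact interval $[0,1]$, hence bounded below by a positive constant depending only on $p$.)

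There is no real obstacle; the only point requiring a little care is precisely the remark that the lower bound needs two terms of the Poisson series (or an equivalent case distinction), not just one. The boundary case $\lambda=0$ is trivial, since then $N\equiv 0$ and both sides vanish.
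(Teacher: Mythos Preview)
Your proof is correct and noticeably more elementary than the paper's. The paper treats the ranges $p\ge 2$ and $1\le p<2$ separately: for $p\ge 2$ the upper bound is obtained via the moment generating function (reducing to even integer moments and observing that the $n$-th centred moment is $\lambda$ times a polynomial in $\lambda$), while the lower bound is derived by comparing the series termwise to the variance $\E|N-\lambda|^2=\lambda$ and analysing the sign of an explicit function $f_p(\lambda)$ on $[0,1]$; the case $1\le p<2$ is then handled by a Cauchy--Schwarz reduction to the case $4-p\ge 2$. By contrast, you give a single argument valid for all $p\ge 1$: the upper bound comes from the crude estimate $|k-\lambda|\le k$ for $k\ge 1$ together with one reindexing, and the lower bound from a case distinction at $\lambda=1/2$ keeping only the $k=0$ or $k=1$ term. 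Your route is shorter and yields explicit constants $b_p=e^{-1}2^{-p}$ and $c_p=1+\sum_{j\ge 0}(j+1)^{p-1}/j!$; the paper's approach, while longer, makes the connection to the second moment structure more visible. Your alternative remark (that $\lambda^{p-1}+(1-\lambda)^p$ is continuous and positive on $[0,1]$) is in fact closer in spirit to the paper's lower-bound argument.
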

\begin{pf}
The inequalities are trivial if $\lambda=0$, so we may assume $\lambda
>0$. Suppose first that $2\leq p<\infty$. We begin by proving the
inequality on the left-hand side of (\ref{eqn:PoissonMoments}). We have
%
\begin{eqnarray}
\label{eqn:momGen1} \E|N-\lambda|^p & =& \sum
_{k=0}^{\infty} |k-\lambda|^p
\frac
{\lambda^k
e^{-\lambda}}{k!}
\nonumber
\\[-8pt]
\\[-8pt]
\nonumber
& \geq&\sum_{k=2}^{\infty} |k-
\lambda|^2 \frac{\lambda^k
e^{-\lambda
}}{k!} + |\lambda|^p
e^{-\lambda} + |1-\lambda|^p\lambda e^{-\lambda}.
\end{eqnarray}
Hence,
%
\begin{eqnarray}
\label{eqn:momGen2}&& \E|N-\lambda|^p\nonumber\\
&&\qquad  \geq\E|N-\lambda|^2 - |
\lambda|^2 e^{-\lambda} - |1-\lambda|^2\lambda
e^{-\lambda} + |\lambda|^p e^{-\lambda} + |1-
\lambda|^p\lambda e^{-\lambda}
\nonumber
\\[-8pt]
\\[-8pt]
\nonumber
&&\qquad = \lambda+ \lambda e^{-\lambda}\bigl(-\lambda- (1-\lambda)^2 +
\lambda ^{p-1} + (1-\lambda)^p\bigr)
\\
&&\qquad = \lambda\bigl(1 + e^{-\lambda}f_p(\lambda)\bigr),\nonumber
\end{eqnarray}
where
%
\begin{equation}
\label{eqn:fp} f_p(\lambda) = \lambda^{p-1} -
\lambda^2 + \lambda- 1 + (1-\lambda)^p.
\end{equation}
One easily sees that $\min_{0\leq\lambda\leq1}(1+e^{-\lambda
}f_p(\lambda)) = b_p>0$. Indeed,
\[
1+e^{-\lambda}f_p(\lambda) > 1 + e^{-\lambda}\bigl(-
\lambda^2+\lambda -1\bigr) + e^{-\lambda}(1-\lambda)^p.
\]
Now,
\[
1 + e^{-\lambda}\bigl(-\lambda^2 + \lambda- 1\bigr) +
e^{-\lambda}(1-\lambda)^p > 0
\]
if and only if
\[
(1-\lambda)^p > -e^{\lambda} + \lambda^2 - \lambda+
1 = -2\lambda+ \frac{\lambda^2}{2} - \frac{\lambda^3}{6} - \frac{\lambda^4}{24} -
\cdots.
\]
Clearly, this holds if $0\leq\lambda\leq1$. This proves the left-hand
side inequality of (\ref{eqn:PoissonMoments}) if $2\leq p<\infty$. We
now consider the right-hand side inequality. It suffices to prove this
in the case where $p$ is an even integer $n$. Since the moment
generating function of $N-\lambda$ is given by
\[
\E\bigl(e^{t(N-\lambda)}\bigr) = e^{\lambda(e^t-1-t)} = \exp \Biggl(\lambda \sum
_{n=2}^\infty\frac{t^n}{n!} \Biggr),
\]
it is easy to see that the $n$th moment of $N-\lambda$ can be written
as $\lambda p_n(\lambda)$ for some polynomial $p_n$ with positive
coefficients. Since $\max_{0\leq\lambda\leq1}|p_n(\lambda)|\leq c_n$
for some constant $c_n>0$, our proof for the case $2\leq p<\infty$ is
complete.

Suppose now that $1\leq p<2$. Then, by the Cauchy--Schwartz inequality,
\begin{eqnarray*}
\lambda& =& \E|N-\lambda|^2 = \E|N-\lambda|^{p/2}|N-\lambda
|^{2-{p}/{2}}
\\
& \leq&\bigl(\E|N-\lambda|^p\bigr)^{{1}/{2}}\bigl(\E|N-
\lambda|^{4-p}\bigr)^{{1}/{2}}.
\end{eqnarray*}
Since $4-p\geq2$, we find by the above that
\[
\lambda^2 \leq\E|N-\lambda|^p\E|N-\lambda|^{4-p}
\leq\E |N-\lambda|^p c_{4-p}\lambda.
\]
To prove the right-hand side inequality in (\ref{eqn:PoissonMoments}),
note that if $1\leq p<2$ the inequalities in (\ref{eqn:momGen1}) and
(\ref{eqn:momGen2}) reverse and, therefore,
\[
\E|N-\lambda|^p \leq\lambda\max_{0\leq\lambda\leq
1}
\bigl(1+e^{-\lambda
}f_p(\lambda)\bigr),
\]
where $f_p$ is the continuous function defined in (\ref{eqn:fp}).
\end{pf}
%
\begin{remark}
\label{rem:assumptionSimpleProcess} By refining the partition $\pi$ in
Definition~\ref{def:simplePoissonSI} if necessary, we can and will
always assume that
$(t_{i+1}-t_i)\nu(A_j)\leq1$ for all $i=1,\ldots,l$, $j=1,\ldots,m$.
This will allow us to apply Lemma~\ref{lem:PoissonMoments} to the
compensated Poisson random
variables $\tilde{N}((t_{i}\wedge t,t_{i+1}\wedge t]\times(A_j\cap B))$.
\end{remark}
Let us finally record the following easy observation for further reference.
%
\begin{lemma}
\label{lem:conditionSimple} Suppose that $(E,\cE,\mu)$ is a $\sigma
$-finite measure space and let $X$ be a Banach space. Let $A_1,\ldots
,A_n$ be disjoint sets in $\Si$
satisfying $\mu(A_i)<\infty$ and let $\cA$ be the $\sigma$-algebra
generated by $A_1,\ldots,A_n$. Then, for any $G \in L^1(E;X)$ supported
on $\bigcup_{i=1}^n A_i$,
\[
\E(G|\cA) = \sum_{i=1}^n
\chi_{A_i} y_i
\]
for certain $y_i \in X$.
\end{lemma}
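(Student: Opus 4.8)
The plan is to compute $\E(G|\cA)$ directly by decomposing $E$ into the atoms of $\cA$ and averaging $G$ over each. First I would observe that, writing $A_0:=E\setminus\bigcup_{i=1}^n A_i$, the disjointness of the $A_i$ forces $\cA$ to consist precisely of the finite unions $\bigcup_{i\in I}A_i$ with $I\subseteq\{0,1,\dots,n\}$; thus $A_0,A_1,\dots,A_n$ are the atoms of $\cA$. By hypothesis $\mu(A_i)<\infty$ for $1\le i\le n$, and $G$ vanishes $\mu$-a.e.\ on $A_0$ since $G$ is supported on $\bigcup_{i=1}^n A_i$.

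Next I would write down the candidate. For each $i\in\{1,\dots,n\}$ with $\mu(A_i)>0$, set $y_i:=\mu(A_i)^{-1}\int_{A_i}G\,d\mu\in X$; this Bochner integral is well defined because $\chi_{A_i}G\in L^1(E;X)$. For the indices with $\mu(A_i)=0$ put $y_i:=0$, the choice being immaterial up to null sets. Let $H:=\sum_{i=1}^n\chi_{A_i}y_i$; this is $\cA$-measurable, lies in $L^1(E;X)$ as a finite sum of integrable functions, and vanishes on $A_0$.

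It then remains to verify the defining averaging identity. For $B=\bigcup_{i\in I}A_i\in\cA$ one computes
$$\int_B H\,d\mu=\sum_{\substack{i\in I\\ i\ge1}}\mu(A_i)\,y_i=\sum_{\substack{i\in I\\ i\ge1}}\int_{A_i}G\,d\mu=\int_{B\setminus A_0}G\,d\mu=\int_B G\,d\mu,$$
the last step using that $G=0$ a.e.\ on $A_0$. Hence $H$ satisfies the characterizing property of the conditional expectation, so $\E(G|\cA)=H=\sum_{i=1}^n\chi_{A_i}y_i$, which is exactly the asserted form.

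The argument is essentially routine; the only point needing a little care is that $(E,\cA,\mu)$ need not be $\sigma$-finite when $\mu(A_0)=\infty$, so $\E(G|\cA)$ should be read as the unique $\cA$-measurable element of $L^1(E;X)$ satisfying the averaging identity above. The support hypothesis on $G$ together with the finiteness of the $\mu(A_i)$ makes this unambiguous, so no genuine obstacle arises.
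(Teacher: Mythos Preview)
Your proof is correct and follows essentially the same approach as the paper: both identify the atoms of $\cA$ (you call the complement $A_0$, the paper calls it $A_{n+1}$), write down the candidate $H=\sum_i \chi_{A_i}\,\mu(A_i)^{-1}\int_{A_i}G\,d\mu$, and verify the averaging identity. Your remark about the possible failure of $\sigma$-finiteness of $(E,\cA,\mu)$ when $\mu(A_0)=\infty$ is a nice extra bit of care that the paper leaves implicit.
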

\begin{pf}
Let $A_{n+1}$ be the complement of $\bigcup_{i=1}^n A_i$ in $E$. Since
$A_1,\ldots,A_{n+1}$ are disjoint, $\cA$ is actually a finite algebra
consisting of $A_1,\ldots,A_{n+1}$ and all their possible unions.
Moreover, for any $1\leq i\leq n$,
\[
\int_{A_i} G \,d\mu= \int_{A_i} \sum
_{\{1\leq j\leq n  \dvtx  \mu
(A_j)\neq0\}} \bigl(\mu(A_j)
\bigr)^{-1} \biggl(\int_{A_j} G \,d\mu \biggr) \chi
_{A_j} \,d\mu.
\]
Since $\int_{A_{n+1}} G  \,d\mu=0$ by assumption, we conclude that
\[
\E(G|\cA) = \sum_{\{1\leq j\leq n  \dvtx  \mu(A_j)\neq0\}} \bigl(\mu (A_j)
\bigr)^{-1} \chi_{A_j} \int_{A_j} G \,d\mu.
\]
\upqed\end{pf}
We are now ready to prove Theorem~\ref{thm1.1}.
\begin{pf*}{Proof of Theorem~\ref{thm1.1}}
Using Assumption~\ref{ass:PoissonFiltration}, it is not difficult to
show that the process $(\int_{(0,s]\ti B} F  \,d\tilde{N})_{s>0}$ is a
martingale. Therefore, the map
\[
s\mapsto \biggl\|\int_{(0,s]\ti B} F \,d\tilde{N} \biggr\|_{L^q(S)}
\]
defines a positive submartingale in $L^p(\Om)$ and by Doob's maximal
inequality (see, e.g., \cite{ReY91}, Theorem 1.7) we have for any $p>1$,
\[
\biggl(\E\sup_{0<s\leq t} \biggl\|\int_{(0,s]\ti B} F \,d
\tilde {N} \biggr\| _{L^q(S)}^p \biggr)^{{1}/{p}} \leq
p' \biggl(\E \biggl\|\int_{(0,t]\ti B} F \,d\tilde{N}
\biggr\|_{L^q(S)}^p \biggr)^{{1}/{p}},
\]
where $\frac{1}{p} + \frac{1}{p'} = 1$. Moreover, $L^q(S)$ has the UMD
property if $1<q<\infty$, so in view of the decoupling inequalities
(\ref{eqn:decouplingSIIntro}) it suffices to prove
%
\begin{equation}
\label{eqn:PoissonSINoSup} \biggl(\E\E_c\biggl \|\int_{(0,t]\ti B} F \,d
\tilde{N}^c \biggr\| _{L^q(S)}^p \biggr)^{{1}/{p}}
\simeq_{p,q} \|F\chi_{(0,t]\ti B}\|_{\cI_{p,q}},
\end{equation}
where $\tilde{N}^c$ is an independent copy of $\tilde{N}$ on a
probability space $(\Om_c,\cF_c,\bP_c)$. We show this in the cases
$2\leq q\leq p<\infty$ and $1<p\leq q\leq2$ in detail. All the main
technical difficulties occur in these two cases. For the similar proof
in the other cases, we refer the reader to Appendix~\ref
{app:remainingCases}. Let $F$ be the simple adapted process given in
(\ref{eqn:simpleBanach}), taking Remark~\ref
{rem:assumptionSimpleProcess} into account. We may assume that
$t=t_{l+1}$ and $B=\bigcup_{j=1}^m A_j$. We write $\tilde{N}_{i,j}^c:=
\tilde{N}^c((t_i,t_{i+1}]\ti A_j)$ for brevity.

\emph{Case $2\leq q\leq p<\infty$}: Set $y_{i,j} = \sum_{k=1}^n
F_{i,j,k} \oto x_{i,j,k}$, then the doubly indexed sequence $d_{i,j} =
y_{i,j}\tilde{N}_{i,j}^c$ satisfies
\[
\int_{(0,t]\ti B} F \,d\tilde{N} = \sum_{i,j}
d_{i,j}.
\]
Moreover, for any fixed $\omega\in\Om$ the sequence $(d_{i,j}(\omega
))_{i,j}$ consists of independent, mean-zero random variables. By
applying Theorem~\ref{thm:summaryRosIntro} pointwise in $\Om$, we find
\begin{eqnarray*}
&&\biggl(\E_c \biggl\|\sum_{i,j}
d_{i,j} \biggr\|_{L^q(S)}^p \biggr)^{1/p}\\
&&\qquad
\simeq_{p,q} \max \biggl\{ \biggl\| \biggl(\sum_{i,j}
\E _{c}|d_{i,j}|^2 \biggr)^{{1}/{2}}
\biggr\|_{L^q(S)}, \biggl(\sum_{i,j} \E_{c}
\|d_{i,j}\|_{L^q(S)}^q \biggr)^{{1}/{q}}, \\
&&\hspace*{192pt}\biggl(
\sum_{i,j} \E_c\|d_{i,j}
\|_{L^q(S)}^p \biggr)^{{1}/{p}} \biggr\}
\end{eqnarray*}
and by taking the $L^p(\Om)$-norm on both sides we arrive at
\begin{eqnarray*}
&& \biggl(\E\E_c \biggl\|\sum_{i,j}
d_{i,j} \biggr\|_{L^q(S)}^p \biggr)^{1/p}
\\
&&\qquad \simeq_{p,q} \max \biggl\{ \biggl(\E\biggl \| \biggl(\sum
_{i,j} \E _{c}|d_{i,j}|^2
\biggr)^{{1}/{2}}\biggr \|_{L^q(S)}^p \biggr)^{{1}/{p}},
\\
&&\hspace*{39pt}\qquad\quad \biggl(\E \biggl(\sum_{i,j} \E_{c}
\|d_{i,j}\|_{L^q(S)}^q \biggr)^{p/q}
\biggr)^{{1}/{p}}, \biggl(\sum_{i,j} \E
\E_c\|d_{i,j}\| _{L^q(S)}^p
\biggr)^{{1}/{p}} \biggr\}.
\end{eqnarray*}
Using Lemma~\ref{lem:PoissonMoments} and Remark~\ref
{rem:assumptionSimpleProcess}, we compute
%
\begin{eqnarray}
\label{eqn:compSI1} && \biggl(\E \biggl\| \biggl(\sum_{i,j}
\E_{c}|d_{i,j}|^2 \biggr)^{1/2}
\biggr\|_{L^q(S)}^p \biggr)^{{1}/{p}}
\nonumber
\\
& &\qquad= \biggl(\E \biggl\| \biggl(\sum_{i,j}
|y_{i,j}|^2\E_c\bigl|\tilde {N}_{i,j}^c\bigr|^2
\biggr)^{{1}/{2}} \biggr\|_{L^q(S)}^p \biggr)^{{1}/{p}}
\\
&&\qquad = \biggl(\E \biggl\| \biggl(\sum_{i,j}
|y_{i,j}|^2(t_{i+1}-t_i) \nu
(A_j) \biggr)^{{1}/{2}} \biggr\|_{L^q(S)}^p
\biggr)^{{1}/{p}} = \|F\|_{\cS_q^p}\nonumber
\end{eqnarray}
and
%
\begin{eqnarray}
\label{eqn:compSI2} && \biggl(\E \biggl(\sum_{i,j}
\E_{c}\|d_{i,j}\|_{L^q(S)}^q
\biggr)^{p/q} \biggr)^{{1}/{p}}
\nonumber
\\
&&\qquad = \biggl(\E \biggl(\sum_{i,j} \|y_{i,j}
\|_{L^q(S)}^q\E_c\bigl|\tilde {N}_{i,j}^c\bigr|^q
\biggr)^{{p}/{q}} \biggr)^{{1}/{p}}
\\
&&\qquad \simeq_{q} \biggl(\E \biggl(\sum_{i,j}
\|y_{i,j}\|_{L^q(S)}^q(t_{i+1}-t_i)
\nu(A_j) \biggr)^{{p}/{q}} \biggr)^{{1}/{p}} = \|F
\|_{\cD_{q,q}^p}.\nonumber
\end{eqnarray}
Finally,
%
\begin{eqnarray}
\label{eqn:compSI3} &&\biggl(\sum_{i,j} \E\E_c
\|d_{i,j}\|_{L^q(S)}^p \biggr)^{{1}/{p}}\nonumber\\[2pt]
&&\qquad =
\biggl(\sum_{i,j} \E\|y_{i,j}
\|_{L^q(S)}^p\E_c\bigl|\tilde{N}_{i,j}^c\bigr|^p
\biggr)^{{1}/{p}}
\\[2pt]
&&\qquad \simeq_p \biggl(\sum_{i,j} \E
\|y_{i,j}\|_{L^q(S)}^p(t_{i+1}-t_i)
\nu (A_j) \biggr)^{{1}/{p}} = \|F\|_{\cD_{p,q}^p}.\nonumber
\end{eqnarray}
We conclude that (\ref{eqn:PoissonSINoSup}) holds.

\emph{Case $1<p\leq q\leq2$}: Let $\cI_{\mathrm{elem}}$ denote the
linear space of all simple functions on $\Om\ti\R_+\ti J\ti S$ with
support\vspace*{2pt} of finite measure. Note that $\cI_{\mathrm{elem}}$ is dense in
$\cS_{q}^p$, $\cD_{p,q}^p$ and $\cD_{q,q}^p$. Hence,\vspace*{1pt} if we fix $\eps
>0$, we can find a decomposition $F=F_1+F_2+F_3$ with $F_{\alpha} \in
\cI_{\mathrm{elem}}$ for $\al=1,2,3$ such that
\[
\|F_1\|_{\cS_q^p} + \|F_2\|_{\cD_{p,q}^p} +
\|F_3\|_{\cD_{q,q}^p} \leq \|F\|_{\cI_{p,q}} + \eps.
\]
Clearly, we may assume that $F_{1},F_2$ and $F_3$ have the same support
in $\R_+\ti J$ as~$F$. Let $\cA$ be the sub-$\sigma$-algebra of $\cB
(\R
_+)\ti\cJ$ generated by the sets $(t_i,t_{i+1}]\ti A_j$. The associated
conditional expectation $\E(\cdot|\cA)$ is well defined, as $\cJ$ is
$\sigma$-finite. By Lemma~\ref{lem:conditionSimple}, $\E(F_{\alpha
}|\cA)$
is of the form
\[
\E(F_{\alpha}|\cA) = \sum_{i,j,k}
F_{i,j,k,\alpha} \oto\chi _{(t_i,t_{i+1}]} \oto\chi_{A_j} \oto
x_{i,j,k,\alpha} \qquad(\alpha=1,2,3).
\]
Let $y_{i,j,\alpha} = \sum_{k=1}^n F_{i,j,k,\alpha} \oto
x_{i,j,k,\alpha
}$ and set $d_{i,j,\alpha} = y_{i,j,\alpha}N_{i,j}^c$, so that $d_{i,j}
= y_{i,j}N_{i,j}^c$ satisfies
\[
d_{i,j} = d_{i,j,1} + d_{i,j,2} + d_{i,j,3}.
\]
We apply Theorem~\ref{thm:summaryRosIntro} pointwise in $\Om$ to find
\begin{eqnarray*}
&& \biggl(\E_c \biggl\|\sum_{i,j}
d_{i,j} \biggr\|_{L^q(S)}^p \biggr)^{{1}/{p}}
\\[2pt]
&&\qquad \lesssim_{p,q} \biggl\| \biggl(\sum_{i,j}
\E_{c}|d_{i,j,1}|^2 \biggr)^{1/2}
\biggr\|_{L^q(S)}
\\[2pt]
&&\hspace*{13pt}\qquad\quad{} + \biggl(\sum_{i,j} \E_c
\|d_{i,j,2}\|_{L^q(S)}^p \biggr)^{1/p}
+
\biggl(\sum_{i,j} \E_{c}
\|d_{i,j,3}\|_{L^q(S)}^q \biggr)^{{1}/{q}}.
\end{eqnarray*}
By taking $L^p(\Om)$-norms on both sides and using the triangle
inequality, we obtain
\begin{eqnarray*}
&& \biggl(\E\E_c \biggl\|\sum_{i,j}
d_{i,j} \biggr\|_{L^q(S)}^p \biggr)^{1/p}
\\
&&\qquad \lesssim_{p,q} \biggl(\E \biggl\| \biggl(\sum_{i,j}
\E _{c}|d_{i,j,1}|^2 \biggr)^{{1}/{2}}
\biggr\|_{L^q(S)}^p \biggr)^{{1}/{p}}
\\
&&\hspace*{13pt}\qquad\quad{} + \biggl(\sum_{i,j} \E\E_c
\|d_{i,j,2}\|_{L^q(S)}^p \biggr)^{1/p} +
\biggl(\E \biggl(\sum_{i,j} \E_{c}
\|d_{i,j,3}\|_{L^q(S)}^q \biggr)^{p/q}
\biggr)^{{1}/{p}}.
\end{eqnarray*}
By the computations in (\ref{eqn:compSI1}), (\ref{eqn:compSI2}) and
(\ref{eqn:compSI3}),
%
\begin{eqnarray}
\label{eqn:normEstimatesFalpha} \biggl(\E \biggl\| \biggl(\sum_{i,j}
\E_{c}|d_{i,j,1}|^2 \biggr)^{1/2}
\biggr\|_{L^q(S)}^p \biggr)^{{1}/{p}} & =&\bigl \|\E(F_1|
\cA)\bigr\|_{\cS_q^p} \leq\| F_1\|_{\cS_q^p},
\nonumber
\\
\biggl(\sum_{i,j} \E\E_c
\|d_{i,j,2}\|_{L^q(S)}^p \biggr)^{{1}/{p}} &
\simeq_p &\bigl\|\E(F_2|\cA)\bigr\|_{\cD_{p,q}^p} \leq
\|F_2\|_{\cD_{p,q}^p},
\\
\biggl(\E \biggl(\sum_{i,j} \E_{c}
\|d_{i,j,3}\|_{L^q(S)}^q \biggr)^{p/q}
\biggr)^{{1}/{p}} & \simeq_q&\bigl\|\E(F_3|\cA)
\bigr\|_{\cD_{q,q}^p} \leq\|F_3\|_{\cD_{q,q}^p}.\nonumber
\end{eqnarray}
We conclude that
\begin{eqnarray*}
\biggl(\E\E_c \biggl\|\int_{(0,t]\ti B} F \,d
\tilde{N}^c \biggr\| _{L^q(S)}^p \biggr)^{{1}/{p}} &
\lesssim_{p,q}& \|F_1\|_{\cS_q^p} + \|F_2
\|_{\cD
_{p,q}^p} + \|F_3\|_{\cD_{q,q}^p}
\\
& \leq&\|F\|_{\cI_{p,q}} + \eps.
\end{eqnarray*}
We deduce the reverse inequality by duality. If $p',q'$ are the H\"
{o}lder conjugates of $p$ and $q$, then $(\cS_q^p)^*=\cS_{q'}^{p'}$,
$(\cD_{q,q}^p)^* = \cD_{q',q'}^{p'}$ and $(\cD_{p,q}^p)^* = \cD
_{p',q'}^{p'}$. Therefore, it follows from (\ref
{eqn:sumIntersectionDuality}) that $\cI_{p,q}^* = \cI_{p',q'}$. We let
\[
\langle F,G\rangle= \int_{\Om\ti\R_+\ti J\ti S} FG \,d\bP \,dt \,d\nu \,d\sigma
\]
denote the associated duality bracket. If $G \in\cI_{\mathrm{elem}}$
has the same support as $F$, then $\E(G|\cA)$ is of the form
\[
\E(G|\cA) = \sum_{i,j,k} G_{i,j,k}\oto
\chi_{(t_i,t_{i+1}]} \oto \chi _{A_j} \oto x_{i,j,k}^*,
\]
where $G_{i,j,k} \in L^{\infty}(\Om)$. Now,
%
\begin{eqnarray}
\label{eqn:simpleEstDualSI}
\langle F,G\rangle& =& \bigl\langle F,\E(G|\cA)\bigr\rangle
\nonumber
\\
& =& \sum_{i,j,k}\E(F_{i,j,k}
G_{i,j,k}) \,dt\ti \,d\nu\bigl((t_i,t_{i+1}]\ti
A_j\bigr) \bigl\langle x_{i,j,k},x_{i,j,k}^*\bigr\rangle
\\
& =& \sum_{i,j,k}\E(F_{i,j,k}
G_{i,j,k}) \,dt\ti \,d\nu\bigl((t_i,t_{i+1}]\ti
A_j\bigr) \bigl\langle x_{i,j,k},x_{i,j,k}^*\bigr\rangle
\nonumber
\\
& =& \sum_{i,j,k,l,m,n} \E(F_{i,j,k}
G_{l,m,n}) \E_c\bigl(\tilde {N}_{i,j}^c
\tilde{N}_{l,m}^c\bigr) \bigl\langle x_{i,j,k},x_{l,m,n}^*
\bigr\rangle
\nonumber
\\
& =& \sum_{i,j,k,l,m,n} \E\E_c
\bigl(F_{i,j,k} \tilde{N}_{i,j}^c G_{l,m,n}
\tilde{N}_{l,m}^c\bigl\langle x_{i,j,k},x_{l,m,n}^*
\bigr\rangle\bigr)
\nonumber
\\
& =& \biggl\langle\sum_{i,j,k} F_{i,j,k}
\tilde{N}_{i,j}^c x_{i,j,k}, \sum
_{l,m,n}G_{l,m,n} \tilde{N}_{l,m}^c
x_{l,m,n}^* \biggr\rangle
\nonumber
\\
& \leq& \biggl\|\int_{(0,t]\ti B} F \,d\tilde{N}^c
\biggr\|_{L^p(\Om\ti
\Om
_c;L^q(S))}
\nonumber
\\
&&{} \times \biggl\|\sum_{l,m,n}G_{l,m,n}
\tilde{N}_{l,m}^c x_{l,m,n}^*\biggr \| _{L^{p'}(\Om\ti\Om_c;L^{q'}(S))}.\nonumber
\end{eqnarray}
Since $2\leq q'\leq p'<\infty$, our previously established case
implies that
\[
\biggl\|\sum_{l,m,n}G_{l,m,n} \tilde{N}_{l,m}^c
x_{l,m,n}^* \biggr\| _{L^{p'}(\Om\ti\Om_c;L^{q'}(S))} \lesssim_{p,q}
 \bigl\|\E(G|\cA)\bigr\|
_{\cI
_{p',q'}} \leq\|G\|_{\cI_{p',q'}}.
\]
Summarizing, we find
\[
\langle F,G\rangle\lesssim_{p,q}\biggl \|\int_{(0,t]\ti B} F \,d
\tilde {N}^c \biggr\|_{L^p(\Om;L^q(S))}\|G\|_{\cI_{p',q'}}.
\]
Taking the supremum over all $G \in\cI_{\mathrm{elem}}$ yields the
result.

For the proof of the final assertion, note that $L^1(S)$ is not a UMD
space. However, for any $1\leq p<\infty$, the one-sided decoupling inequality
\[
\biggl(\E \biggl\|\int_{(0,t]\ti B} F \,d\tilde{N} \biggr\| _{L^1(S)}^p
\biggr)^{{1}/{p}} \lesssim_{p} \biggl(\E\E_c\biggl \|\int
_{(0,t]\ti B} F \,d\tilde{N}^c\biggr \|_{L^1(S)}^p
\biggr)^{{1}/{p}}
\]
still holds, see \cite{CoV12}. The remainder of the proof is the same
as in the case $q>1$.
\end{pf*}
%
\begin{remark}
It is clear from the proof that the inequality
\[
\E \biggl\|\int_{(0,t]\ti B} F \,d\tilde{N} \biggr\|_{L^q(S)} \lesssim
_{q} \| F\|_{\cI_{1,q}}
\]
is valid if $1\leq q<\infty$.
\end{remark}

\section{Preliminaries on noncommutative $L^q$-spaces}

We now turn to the extension of the It\^{o} isomorphism in Theorem~\ref
{thm1.1} to integrands taking values in a
noncommutative $L^q$-space. We begin by reviewing some facts on
noncommutative $L^q$-spaces. References for proofs of the results
presented below can be found in the survey \cite{PiX03}. Let $\cM$ be a
von Neumann algebra acting on a complex Hilbert space $H$, which is
equipped with a normal, semi-finite faithful trace $\tr$. We say that
a closed,
densely defined linear operator $x$ on $H$ is \emph{affiliated} with
the von Neumann algebra $\cM$ if $ux=xu$ for any unitary element $u$ in
the commutant $\cM'$ of~$\cM$. For such an operator, we define its \emph{distribution
function} by
\[
d(v;x) = \tr\bigl(e^{|x|}(v,\infty)\bigr)\qquad (v\geq0),
\]
where $e^{|x|}$ is the spectral measure of $|x|$. The \emph{decreasing
rearrangement} of $x$ is defined by
\[
\mu_t(x) = \inf\bigl\{v>0 \dvtx d(v;x)\leq t\bigr\}\qquad (t\geq0).
\]
We call $x$ \emph{$\tr$-measurable} if $d(v;x)<\infty$ for some $v>0$.
We let $S(\tr)$ denote the linear space of all $\tr$-measurable
operators. One can show that
$S(\tr)$ is a metrizable, complete topological $*$-algebra with respect
to the measure topology. Moreover, the trace $\tr$ extends to a trace
(again denoted by $\tr$) on
the set $S(\tr)_{+}$ of positive $\tr$-measurable operators by setting
%
\begin{equation}
\label{eqn:traceStau} \tr(x) = \int_0^{\infty}
\mu_t(x) \,dt\qquad \bigl(x \in S(\tr)_+\bigr).
\end{equation}
For $0<q<\infty$, we define
%
\begin{equation}
\label{eqn:NCLqNorm} \|x\|_{L^q(\cM)} = \bigl(\tr\bigl(|x|^q\bigr)
\bigr)^{{1}/{q}} \qquad\bigl(x \in S(\tr)\bigr).
\end{equation}
The linear space $L^{q}(\cM,\tr)$ of all $x \in S(\tr)$ satisfying
$\|
x\|_{L^q(\cM)}<\infty$ is called the \textit{noncommutative
$L^q$-space} associated with the pair
$(\cM,\tr)$. We usually denote $L^q(\cM,\tr)$ by $L^{q}(\cM)$ for
brevity. The map $\|\cdot\|_{L^q(\cM)}$ in (\ref{eqn:NCLqNorm}) defines
a norm (or $q$-norm if $0<q<1$)
on the space $L^q(\cM)$ under which it becomes a Banach space
(resp., quasi-Banach space). It can alternatively be viewed as
the completion of $\cM$ in the
(quasi-)norm $\|\cdot\|_{L^q(\cM)}$. We use the expression $L^{\infty
}(\cM)$ to denote $\cM$ equipped with its operator norm. By (\ref
{eqn:traceStau}) and using that
$\mu(|x|^q)=\mu(x)^q$, the noncommutative $L^q$-(quasi-)norm can
alternatively be computed as
\[
\|x\|_{L^q(\cM)} = \biggl(\int_0^{\infty}
\mu_t(x)^q \,dt \biggr)^{1/q} \qquad\bigl(x \in
L^q(\cM)\bigr).
\]
If $(S,\Si,\sigma)$ is a Maharam measure space, then $\cM=L^{\infty}(S)$
is a von Neumann algebra, which can be equipped with the normal,
semifinite faithful trace $\tr(f)=\int f  \,d\sigma$. In this case,
$L^q(\cM
)$ coincides with the usual Bochner space $L^q(S)$. Another familiar
example is obtained by taking $\cM=B(H)$, for a Hilbert space $H$. If
$B(H)$ is equipped with its standard trace, then the associated
noncommutative $L^q$-spaces are the usual Schatten spaces.

Below we shall use the following facts. First, recall H\"
{o}lder's inequality: if $0<q,r,s\leq\infty$ are such
that $\frac{1}{q} = \frac{1}{r} + \frac{1}{s}$ and $x \in L^r(\cM
)$, $y
\in L^s(\cM)$, then $xy \in L^q(\cM)$ and
%
\begin{equation}
\label{eqn:NCHolderLq} \|xy\|_{L^q(\cM)} \leq\|x\|_{L^r(\cM)}\|y\|
_{L^s(\cM)}.
\end{equation}
For $1\leq q<\infty$ and $\frac{1}{q} + \frac{1}{q'} = 1$, the familiar
duality $L^q(\cM)^*=L^{q'}(\cM)$ holds isometrically, with the duality
bracket given by $\langle x,y\rangle= \tr(xy)$. In particular,
$L^q(\cM
)$ is reflexive if and only if $1<q<\infty$ and $L^1(\cM)=\cM_*$
isometrically, where $\cM_*$ is the predual of $\cM$. We recall that
$L^q(\cM)$ is a UMD Banach space if and only if $1<q<\infty$. If
$1\leq
q<\infty$, then $L^q(\cM)$ has type $\min\{q,2\}$ and cotype $\max\{
q,2\}$.

We conclude this section by describing the column and row spaces and
their conditional versions. Let $1\leq q<\infty$. For
a finite sequence $(x_{i})$ in $L^{q}(\cM)$, we define
%
\begin{eqnarray}
\label{eqn:colRowNormLq} \bigl\|(x_{i})\bigr\|_{L^{q}(\cM;\ell^{2}_{c})} & =& \biggl\| \biggl(\sum
_{i} x_{i}^{*}x_{i}
\biggr)^{{1}/{2}}\biggr \|_{L^{q}(\cM)},
\nonumber
\\[-8pt]
\\[-8pt]
\nonumber
\bigl\|(x_{i})\bigr\|_{L^{q}(\cM;\ell^{2}_{r})} & =&\biggl \| \biggl(\sum
_{i} x_{i}x_{i}^{*}
\biggr)^{{1}/{2}} \biggr\|_{L^{q}(\cM)}.
\end{eqnarray}
Given $x_1,\ldots,x_n$, we let $\operatorname{diag}(x_i)$, $\operatorname
{row}(x_i)$ and $\operatorname{col}(x_i)$ denote the matrix with the $x_i$ on
its diagonal, first row and first column, respectively, and zeroes
elsewhere. Let $\cM\vNT B(\ell^2)$ be the von Neumann tensor product
equipped with its product trace $\tr\ot\operatorname{Tr}$. By noting that
\begin{eqnarray*}
\Biggl\| \Biggl(\sum_{i=1}^n
x_{i}^{*}x_{i} \Biggr)^{{1}/{2}}\Biggr \|
_{L^{q}(\cM)} & =
&\bigl\|\operatorname{col}(x_i)\bigr\|_{L^q(\cM\vNT B(\ell^2))},
\\
\Biggl\| \Biggl(\sum_{i=1}^n
x_{i}x_{i}^* \Biggr)^{{1}/{2}}\Biggr \| _{L^{q}(\cM
)} &
= &\bigl\|\operatorname{row}(x_i)\bigr\|_{L^q(\cM\vNT B(\ell^2))},
\end{eqnarray*}
one sees that the expressions in (\ref{eqn:colRowNormLq}) define two
norms on the linear space of all finitely nonzero sequences in $L^q(\cM
)$. The completions of this
space in these norms are called the \emph{column} and \emph{row
space}, 
respectively.

We shall need a conditional version of these two spaces. Suppose that
$\cN$ is a von Neumann algebra equipped with a normal, semifinite
faithful trace $\sigma$ and let $\cK$ be a von Neumann subalgebra such
that $\sigma|_{\cK}$ is again semifinite. Let $\cE\dvtx \cN\rightarrow
\cK$ be
the conditional expectation with respect to $\cK$. For a finite sequence
$(x_i)$ in $\cN$, we define
%
\begin{eqnarray}
\label{eqn:colRowNormLqConditional}
\bigl \|(x_i)\bigr\|_{L^q(\cN;\cE,\ell^2_c)} & =&\biggl \| \biggl(\sum
_i \cE |x_i|^2
\biggr)^{{1}/{2}} \biggr\|_{L^q(\cN)},
\nonumber
\\[-8pt]
\\[-8pt]
\nonumber
\bigl\|(x_i)\bigr\|_{L^q(\cN;\cE,\ell^2_r)} & =& \biggl\| \biggl(\sum
_i \cE \bigl|x_i^*\bigr|^2
\biggr)^{{1}/{2}} \biggr\|_{L^q(\cN)}.
\end{eqnarray}
Using techniques from Hilbert $C^*$-modules, it was shown by M. Junge
\cite{Jun02} that
\begin{eqnarray*}
&& \bigl\{(x_i)_{i=1}^n \dvtx x_i
\in\cN, n\geq1, \bigl\|(x_i)\bigr\|_{L^q(\cN
;\cE
,\ell^2_c)}<\infty\bigr\} \quad\mathrm{and}
\\
&& \bigl\{(x_i)_{i=1}^n \dvtx x_i
\in\cN, n\geq1, \bigl\|(x_i)\bigr\|_{L^q(\cN
;\cE
,\ell^2_r)}<\infty\bigr\}
\end{eqnarray*}
are normed linear spaces. By taking the completion of these spaces, we
obtain the \textit{conditional column} and \textit{row space},
respectively. Moreover, one can identify these spaces with complemented
subspaces of $L^q(\cK;\ell^2_c)$ and $L^q(\cK;\ell^2_r)$ and in this
way show that for any $1<q<\infty$ and $\frac{1}{q} + \frac{1}{q'}
= 1$
%
\begin{equation}
\label{eqn:dualcondColRow}\qquad \bigl(L^q\bigl(\cN;\cE,\ell^2_c
\bigr)\bigr)^* = L^{q'}\bigl(\cN;\cE,\ell^2_r
\bigr), \qquad\bigl(L^q\bigl(\cN;\cE,\ell ^2_r
\bigr)\bigr)^* = L^{q'}\bigl(\cN;\cE,\ell^2_c
\bigr),
\end{equation}
isometrically, with duality bracket given by
\[
\bigl\langle(x_i),(y_i)\bigr\rangle= \sum
_i \tr(x_i y_i).
\]
We refer to Section~2 of \cite{Jun02} for more information.

\section{$L^q$-valued Rosenthal inequalities: Noncommutative case}

In this section, we prove an extension of Theorem~\ref{thm:summaryRosIntro} for random variables taking values in a
noncommutative $L^q$-space. To state our main result, we introduce the
following norms on the linear space of all finite sequences $(f_i)$ of
random variables in $L^{\infty}(\Om;L^q(\cM))$, which serve as
substitutes for the norms considered in (\ref{eqn:normsRosCom}). First,
for $1\leq p,q<\infty$ we define
%
\begin{equation}
\label{eqn:DpqNormNC} \bigl\|(f_i)\bigr\|_{D_{p,q}} = \biggl(\sum
_i\E\|f_i\|_{L^q(\cM)}^p
\biggr)^{{1}/{p}}
\end{equation}
and we consider a column and row version of the space $S_q$ in
considered earlier, that is, we set
%
\begin{eqnarray}
\label{eqn:SqNormsNC} \bigl\|(f_i)\bigr\|_{S_{q,c}} & =& \biggl\| \biggl(\sum
_i \E|f_i|^2 \biggr)^{1/2}
\biggr\|_{L^q(\cM)},
\nonumber
\\[-8pt]
\\[-8pt]
\nonumber
\bigl\|(f_i)\bigr\|_{S_{q,r}} & =& \biggl\| \biggl(\sum
_i \E\bigl|f_i^*\bigr|^2
\biggr)^{1/2}\biggr \|_{L^q(\cM)}.
\end{eqnarray}
Here, $f_i^*$ denotes the (pointwise) adjoint of $f_i$. To see that the
latter two expressions define two norms, we identify them with a
particular instance of the conditional row and column norms in (\ref
{eqn:colRowNormLqConditional}). We let $\cN$ be the tensor product von
Neumann algebra $L^{\infty}(\Om)\vNT\cM$, equipped with the tensor
product trace $\E\ot\tr$. Let us recall that, for any $1\leq
q<\infty$,
the map defined on simple functions in the Bochner space $L^q(\Om
;L^q(\cM))$ by
\[
I_q \biggl(\sum_i
\chi_{A_i} x_i \biggr) = \sum_i
\chi_{A_i} \ot x_i
\]
extends to an isometric isomorphism
%
\begin{equation}
\label{eqn:tensorBochnerIden} L^q\bigl(\Om;L^q(\cM)\bigr) =
L^q\bigl(L^{\infty
}(\Om )\vNT\cM\bigr).
\end{equation}
Let $\cK$ be the von Neumann subalgebra of $\cN$ given by $\cK=\C
\id\ot
\cM$ and let $\cE$ be the associated conditional expectation. Under the
identification (\ref{eqn:tensorBochnerIden}), the element $\cE(f)$
coincides with the Bochner integral $\E(f)$, whenever $f \in L^q(\cN)$.
In particular, for any finite sequence $(f_i)$ in $\cN$,
\[
\label{eqn:abrevCondNorm} \bigl\|(f_i)\bigr\|_{L^q(\cN;\cE,\ell^2_c)} =
 \bigl\| (f_i)\bigr\|
_{S_{q,c}}, \qquad\bigl\|(f_i)\bigr\|_{L^q(\cN;\cE,\ell^2_r)} =\bigl \|(f_i)
\bigr\|_{S_{q,r}}.
\]
We denote by $D_{p,q}$, $S_{q,c}$ and $S_{q,r}$ the completion of the
linear space of all finite sequences $(f_i)$ of random variables in
$L^{\infty}(\Om;L^q(\cM))$ with respect to the norms in (\ref
{eqn:DpqNormNC}) and (\ref{eqn:SqNormsNC}). By (\ref
{eqn:dualcondColRow}), we have the duality
\[
(S_{q,c})^* = S_{q',r}, \qquad (S_{q,r})^* =
S_{q',c} \qquad\biggl(1<q<\infty, \frac
{1}{q} + \frac{1}{q'} = 1
\biggr).
\]
We are now ready to state the extension of Theorem~\ref{thm:summaryRosIntro}.
%
\begin{theorem}
\label{thm:summaryRosenthalLqNC} Let $1<p,q<\infty$. If $(\xi_i)$ is a
finite sequence of independent, mean-zero $L^q(\cM)$-valued random
variables, then
%
\begin{equation}
\label{eqn:summaryRosenthalLqNC} \biggl(\E \biggl\|\sum_i
\xi_i \biggr\|_{L^q(\cM)}^p \biggr)^{1/p} \simeq
_{p,q} \bigl\|(\xi_i)\bigr\|_{s_{p,q}},
\end{equation}
where $s_{p,q}$ is given by
\begin{eqnarray*}
S_{q,c} \cap S_{q,r} \cap D_{q,q} \cap
D_{p,q} &\qquad& \mathrm{if}\ 2\leq q\leq p<\infty,
\\
S_{q,c} \cap S_{q,r} \cap(D_{q,q} +
D_{p,q}) &\qquad& \mathrm{if}\ 2\leq p\leq q<\infty,
\\
(S_{q,c} \cap S_{q,r} \cap D_{q,q}) +
D_{p,q} &\qquad& \mathrm{if}\ 1<p<2\leq q<\infty,
\\
(S_{q,c} + S_{q,r} + D_{q,q}) \cap D_{p,q}&\qquad&
\mathrm{if}\ 1<q<2\leq p<\infty,
\\
S_{q,c} + S_{q,r} + (D_{q,q} \cap D_{p,q})&\qquad&
 \mathrm{if}\ 1<q\leq p\leq2,
\\
S_{q,c} + S_{q,r} + D_{q,q} + D_{p,q} &\qquad&
\mathrm{if}\ 1<p\leq q\leq2.
\end{eqnarray*}
\end{theorem}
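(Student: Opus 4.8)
The plan is to follow the proof of Theorem~\ref{thm:summaryRosIntro} step by step, with the commutative Khintchine inequalities (\ref{eqn:KhiComLp}) replaced throughout by the noncommutative Khintchine inequalities of Lust-Piquard and Pisier: for $2\le q<\infty$ one has $(\E_r\|\sum_i r_i x_i\|_{L^q(\cM)}^q)^{1/q}\simeq_q\max\{\|(x_i)\|_{L^q(\cM;\ell^2_c)},\|(x_i)\|_{L^q(\cM;\ell^2_r)}\}$, while for $0<q\le 2$ the right-hand side becomes the infimum of $\|(y_i)\|_{L^q(\cM;\ell^2_c)}+\|(z_i)\|_{L^q(\cM;\ell^2_r)}$ over all decompositions $x_i=y_i+z_i$. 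Combined with (\ref{eqn:symmetrization}) this immediately gives the noncommutative analogue of Lemma~\ref{lem:squareFunEst2}: if $1\le p,q\le 2$ then $(\E\|\sum_i\xi_i\|_{L^q(\cM)}^p)^{1/p}\lesssim_{p,q}\|(\xi_i)\|_{S_{q,c}+S_{q,r}}$, and if $2\le p,q<\infty$ then $\max\{\|(\xi_i)\|_{S_{q,c}},\|(\xi_i)\|_{S_{q,r}}\}\lesssim_{p,q}(\E\|\sum_i\xi_i\|_{L^q(\cM)}^p)^{1/p}$; where the commutative argument invokes Jensen's inequality one instead uses the Kadison--Schwarz inequality for the (trace-preserving) conditional expectation $\E(\cdot\,|\,\xi_i)$ and, when $q<2$, the concavity of $\|\cdot\|_{L^{q/2}(\cM)}$ on the positive cone. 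Lemma~\ref{lem:typeCotypeEst} is stated for arbitrary Banach spaces and needs no change, $L^q(\cM)$ having type $\min\{q,2\}$ and cotype $\max\{q,2\}$.

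The technical heart is the noncommutative version of Theorem~\ref{thm:2pqqp}: for $2\le p,q<\infty$,
\[
\Big(\E\Big\|\sum_i\xi_i\Big\|_{L^q(\cM)}^p\Big)^{\frac1p}\simeq_{p,q}\max\Big\{\|(\xi_i)\|_{S_{q,c}},\ \|(\xi_i)\|_{S_{q,r}},\ \Big(\E\Big(\sum_i\|\xi_i\|_{L^q(\cM)}^q\Big)^{\frac pq}\Big)^{\frac1p}\Big\}.
\]
The lower bound for each of the three terms is obtained exactly as in the commutative case (the $S_{q,c}$ and $S_{q,r}$ terms from symmetrization, noncommutative Khintchine and Kadison--Schwarz; the third term from the cotype $q$ of $L^q(\cM)$ via Lemma~\ref{lem:typeCotypeEst}). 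For the upper bound, symmetrization and noncommutative Khintchine reduce the estimate, via the symmetry $\xi_i\mapsto\xi_i^*$ (which interchanges $S_{q,c}$ and $S_{q,r}$ and fixes the third term), to bounding $(\E\|\sum_i\xi_i^*\xi_i\|_{L^{q/2}(\cM)}^{p/2})^{2/p}$. Splitting $\sum_i\xi_i^*\xi_i=\sum_i(\xi_i^*\xi_i-\E\xi_i^*\xi_i)+\sum_i\E\xi_i^*\xi_i$ and using the triangle inequality in $L^{q/2}(\cM)$, the last sum contributes exactly $\|(\xi_i)\|_{S_{q,c}}^2$, while the centred sum is a sum of independent, mean-zero, \emph{self-adjoint} elements of $L^{q/2}(\cM)$. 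When $2\le q\le 4$, $L^{q/2}(\cM)$ has type $q/2$ and one finishes exactly as in the corresponding step of the proof of Theorem~\ref{thm:2pqqp}, with Kadison--Schwarz in place of Jensen. When $q>4$ one applies symmetrization and noncommutative Khintchine a second time --- the self-adjointness is essential here, as it forces the row and column square functions to coincide --- and reduces by the triangle inequality in the column space to estimating $\|(\sum_i|\xi_i|^4)^{1/2}\|_{L^{q/2}(\cM)}=\|(\,|\xi_i|^2)_i\|_{L^{q/2}(\cM;\ell^2_c)}$ (the term coming from the means again contributing $\|(\xi_i)\|_{S_{q,c}}^2$). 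This column norm must then be interpolated between its $\ell^2_c$- and $\ell^q_c$-versions, the latter being isometric to $\ell^q$ of the sequence $(\|\xi_i\|_{L^q(\cM)})$. This interpolation step --- which must replace the concrete pointwise H\"{o}lder argument used in (\ref{eqn:AuxEstq2sNo2}) and rests on the complex interpolation theory for the spaces $L^r(\cM;\ell^s_c)$ --- is the main obstacle. Once it is in place, the argument closes by the same quadratic inequality $a^2\lesssim_{p,q}ab+c^2$ as in the commutative proof.

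The six cases are then treated as in the proof of Theorem~\ref{thm:summaryRosIntro}. In every case the lower estimate $\gtrsim_{p,q}$ follows by duality once all the upper estimates are in hand: from $(S_{q,c})^*=S_{q',r}$, $(S_{q,r})^*=S_{q',c}$, $(D_{p,q})^*=D_{p',q'}$ and $(D_{q,q})^*=D_{q',q'}$, together with (\ref{eqn:sumIntersectionDuality}), one gets $(s_{p,q})^*=s_{p',q'}$ with duality bracket $\langle(\xi_i),(\eta_i)\rangle=\sum_i\E\tr(\xi_i\eta_i)$, and the argument (\ref{eqn:dualArgInd1})--(\ref{eqn:dualArgInd2}) --- in which each $\eta_i$ is replaced by $\E(\eta_i\,|\,\xi_i)-\E\eta_i$, using that the conditional expectation is contractive on each of $S_{q',c}$, $S_{q',r}$, $D_{p',q'}$, $D_{q',q'}$ --- goes through verbatim (the six cases form three dual pairs, matching the stated list). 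The upper estimates are obtained case by case: for $2\le q\le p<\infty$ one combines the key inequality with the scalar Rosenthal inequality (\ref{eqn:RosPos}) applied to $f_i=\|\xi_i\|_{L^q(\cM)}^q$; for $2\le p\le q<\infty$ one combines it with the contractive inclusions $L^q(\Om)\subset L^p(\Om)$ and $\ell^p\subset\ell^q$; and in the four cases with $\min\{p,q\}<2$ one decomposes $\xi_i$ into the pieces dictated by the summands of $s_{p,q}$, conditions each piece onto the $\si$-algebra generated by $\xi_i$, and estimates the resulting independent sums separately using the noncommutative Lemma~\ref{lem:squareFunEst2}, the type $\min\{q,2\}$ of $L^q(\cM)$ (Lemma~\ref{lem:typeCotypeEst}), the scalar Rosenthal inequality (\ref{eqn:RosPos}), and --- in the case $1<q<2\le p<\infty$ --- Hoffmann-J{\o}rgensen's inequality (\ref{eqn:HJTKSIntro}) to first lower the exponent $p$ to $q$. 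Treating the cases in the order $2\le q\le p$, $2\le p\le q$, $1<q\le p\le 2$, $1<p\le q\le 2$, $1<p<2\le q$ (which invokes the already established case $p=q\ge 2$) and $1<q<2\le p$ (which invokes the already established case $p=q\le 2$) avoids any circularity and completes the proof.
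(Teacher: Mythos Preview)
Your overall strategy is correct and matches the paper's: establish noncommutative analogues of Lemma~\ref{lem:squareFunEst2} and Theorem~\ref{thm:2pqqp}, then run the case-by-case argument of Theorem~\ref{thm:summaryRosIntro} verbatim, with the lower bounds obtained by duality via $(s_{p,q})^*=s_{p',q'}$ and the conditioning trick (\ref{eqn:dualArgInd1})--(\ref{eqn:dualArgInd2}).

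The one genuine divergence is in the proof of the key inequality (the noncommutative Theorem~\ref{thm:2pqqp}). You propose to transplant the commutative argument, splitting into $q\le 4$ (type $q/2$) and $q>4$ (second symmetrization, then a column-space interpolation replacing the pointwise H\"older step (\ref{eqn:AuxEstq2sNo2}), which you correctly flag as the main obstacle). The paper instead gives a \emph{single} argument valid for all $2\le q<\infty$---and explicitly remarks that this argument differs from its own commutative proof even when $\cM$ is commutative. After the second symmetrization and Khintchine (the centred terms $|\xi_i|^2-\E|\xi_i|^2$ being self-adjoint, as you note), one must bound $(\E\|(\sum_i|\xi_i|^4)^{1/2}\|_{L^{q/2}(\cM)}^{p/2})^{2/p}$. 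The paper observes that if $x=\mathrm{col}(|\xi_i|)$ and $y=\mathrm{diag}(|\xi_i|)$ in $\cM\overline\otimes B(\ell^2)$, then $yx=\mathrm{col}(|\xi_i|^2)$ and hence $|yx|=(\sum_i|\xi_i|^4)^{1/2}$; so by the noncommutative H\"older inequality (\ref{eqn:NCHolderLq}),
\[
\Big\|\Big(\sum_i|\xi_i|^4\Big)^{\frac12}\Big\|_{L^{q/2}}=\|yx\|_{L^{q/2}}\le\|y\|_{L^q}\,\|x\|_{L^q}
=\Big(\sum_i\|\xi_i\|_{L^q}^q\Big)^{\frac1q}\Big\|\Big(\sum_i|\xi_i|^2\Big)^{\frac12}\Big\|_{L^q},
\]
followed by Cauchy--Schwarz in $\omega$. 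This yields the quadratic inequality $a^2\lesssim ab+c^2$ directly, with no interpolation parameter~$\theta$ and no case split. Your route via complex interpolation of the spaces $L^r(\cM;\ell^s_c)$ could be made to work, but the $\mathrm{col}/\mathrm{diag}$ matrix-H\"older device is considerably more elementary and sidesteps precisely the obstacle you identified.
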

To prove Theorem~\ref{thm:summaryRosenthalLqNC}, we shall need to
generalize Lemma~\ref{lem:squareFunEst2} and Theorem~\ref{thm:2pqqp}.
Let us first recall the noncommutative version of
Khintchine's inequalities (\ref{eqn:KhiComLp}).
%
\begin{theorem}[(Noncommutative Khintchine inequalities)]
\label{thm:KhintchineNC}  Let
$(r_i)$ be a Rademacher sequence and fix $1\leq p<\infty$. If $2\leq
q<\infty$, then, for any finite sequence $(x_i)$ in $L^q(\cM)$,
%
\begin{eqnarray}
\label{eqn:KhiNCUpperBigger2} && \biggl(\E \biggl\|\sum_i
r_i x_i\biggr \|_{L^q(\cM)}^p
\biggr)^{{1}/{p}}
\nonumber
\\[-8pt]
\\[-8pt]
\nonumber
& &\qquad\leq K_{p,q} \max \biggl\{ \biggl\| \biggl(\sum
_i|x_i|^2 \biggr)^{1/2}
\biggr\|_{L^q(\cM)}, \biggl\| \biggl(\sum_i\bigl|x_i^*\bigr|^2
\biggr)^{{1}/{2}} \biggr\| _{L^q(\cM)} \biggr\}
\end{eqnarray}
and
\begin{eqnarray*}
&& \biggl(\E \biggl\|\sum_i r_i
x_i\biggr \|_{L^q(\cM)}^2 \biggr)^{1/2} 
\\
&&\qquad \geq
\max \biggl\{\biggl \| \biggl(\sum_i|x_i|^2
\biggr)^{{1}/{2}}\biggr \| _{L^q(\cM)}, \biggl\| \biggl(\sum
_i\bigl|x_i^*\bigr|^2 \biggr)^{{1}/{2}}
\biggr\|_{L^q(\cM
)} \biggr\}.
\end{eqnarray*}
On the other hand, if $1\leq q\leq2$, then
\begin{eqnarray*}
&& \biggl(\E \biggl\|\sum_i r_i
x_i \biggr\|_{L^q(\cM)}^2 \biggr)^{1/2} 
\\
&&\qquad \leq
\inf \biggl\{ \biggl\| \biggl(\sum_i|y_i|^2
\biggr)^{{1}/{2}} \biggr\| _{L^q(\cM)} + \biggl\| \biggl(\sum
_i\bigl|z_i^*\bigr|^2 \biggr)^{{1}/{2}}
\biggr\|_{L^q(\cM
)} \biggr\}
\end{eqnarray*}
and
\begin{eqnarray*}
&& \biggl(\E \biggl\|\sum_i r_i
x_i \biggr\|_{L^q(\cM)}^q \biggr)^{1/q}
\\
&&\qquad \gtrsim_{p,q} \inf \biggl\{ \biggl\| \biggl(\sum
_i|y_i|^2 \biggr)^{1/2}
\biggr\|_{L^q(\cM)} +\biggl \| \biggl(\sum_i\bigl|z_i^*\bigr|^2
\biggr)^{{1}/{2}}\biggr \|_{L^q(\cM
)} \biggr\},
\end{eqnarray*}
where the infimum is taken over all decompositions $x_i = y_i + z_i$ in
$L^q(\cM)$.
\end{theorem}
%
\begin{remark}
Theorem~\ref{thm:KhintchineNC} was proved for $p=q$ in \cite
{Lus86,LuP91}. The general case immediately follows by applying
Kahane's inequalities (\ref{eqn:Kahane}). It is known that the constant
$\kappa_{p,q}$ in (\ref{eqn:Kahane}) satisfies $\kappa_{p,q}\leq
(p-1)^{1/2}/(q-1)^{1/2}$ if $1<q<p<\infty$ (see, e.g., \cite{PeG99},
Theorem 3.1). It was proved by Buchholz that $K_{2n}^{2n}=(2n)!/(2^n
n!)$ if $n \in\N$ (\cite{Buc01}, Theorem 5 and the remark following
it). From this, it follows that $K_{q,q}<\sqrt{q}$ if $q\geq2$.
Summarizing, if $2\leq q<p<\infty$, then
\[
K_{p,q}\leq\kappa_{p,q}K_{q,q}\leq(p-1)^{1/2}/(q-1)^{1/2}q^{1/2}
\leq\sqrt{2}\sqrt{p-1}
\]
and if $2\leq p\leq q<\infty$, then $K_{p,q}\leq K_{q,q}<\sqrt{q}$.
\end{remark}
In the proof of the next result, we use for $0<q\leq1$ and $\xi\in
L^1(\Om;L^q(\cM)_+)$,
%
\begin{equation}
\label{eqn:ExpLqLess1} \E\|\xi\|_{L^q(\cM)} \leq\|\E\xi\| _{L^q(\cM)}.
\end{equation}
This follows by approximation by step functions using the inequality
\[
\|x+y\|_{L^q(\cM)} \geq\|x\|_{L^q(\cM)} + \|y\|_{L^q(\cM)}\qquad
\bigl(x,y \in L^q(\cM)_+\bigr).
\]
%
\begin{lemma}
\label{lem:squareFunEst2NC} Let $(\xi_i)$ be a finite sequence of
independent, mean-zero $L^q(\cM)$-valued random variables. If $1\leq
p,q<2$, then
\begin{eqnarray*}
& &\biggl(\E \biggl\|\sum_i \xi_i
\biggr\|_{L^q(\cM)}^p \biggr)^{1/p}
\\
&&\qquad \leq4 \inf \biggl\{ \biggl\| \biggl(\sum_i \E|
\eta_i|^2 \biggr)^{1/2} \biggr\|_{L^q(\cM)} + \biggl\|
\biggl(\sum_i \E\bigl|\theta_i^*\bigr|^2
\biggr)^{{1}/{2}} \biggr\|_{L^q(\cM)} \biggr\},
\end{eqnarray*}
where the infimum is taken over all sequences $(\eta_i) \in S_{q,c}$
and $(\theta_i) \in S_{q,r}$ such that $\xi_i = \eta_i + \theta_i$.
On the
other hand, if $2\leq p,q<\infty$, then
\begin{eqnarray*}
&& 2 \biggl(\E \biggl\|\sum_i \xi_i
\biggr\|_{L^q(\cM)}^p \biggr)^{1/p}
\\
&&\qquad \geq\max \biggl\{ \biggl\| \biggl(\sum_i \E|
\xi_i|^2 \biggr)^{1/2} \biggr\| _{L^q(\cM)}, \biggl\|
\biggl(\sum_i \E\bigl|\xi_i^*\bigr|^2
\biggr)^{{1}/{2}} \biggr\|_{L^q(\cM)} \biggr\}.
\end{eqnarray*}
\end{lemma}
\begin{pf}
Suppose $1\leq p,q<2$. Let $(\alpha_i)$ be a finite sequence in
$S_{q,c}$ of independent, mean-zero $L^q(\cM)$-valued random
variables. By
symmetrization (\ref{eqn:symmetrization}) and Theorem~\ref{thm:KhintchineNC},
\begin{eqnarray*}
\biggl(\E \biggl\|\sum_i \alpha_i
\biggr\|_{L^q(\cM)}^p \biggr)^{1/p} & \leq&2 \biggl(\E
\E_r \biggl\|\sum_i r_i
\alpha_i \biggr\|_{L^q(\cM
)}^p \biggr)^{{1}/{p}}
\\
& \leq&2 \biggl(\E \biggl\| \biggl(\sum_i |
\alpha_i|^2 \biggr)^{1/2} \biggr\|
_{L^q(\cM)}^p \biggr)^{{1}/{p}}
\\
& =& 2 \biggl(\E \biggl\|\sum_i |\alpha_i|^2
\biggr\|_{L^{q/2}(\cM
)}^{p/2} \biggr)^{{1}/{p}}
\\
& \leq&2 \biggl(\E \biggl\|\sum_i |
\alpha_i|^2 \biggr\|_{L^{q/2}(\cM
)} \biggr)^{{1}/{2}}
\\
& \leq&2 \biggl\|\sum_i \E|\alpha_i|^2
\biggr\|_{L^{{q}/{2}}(\cM
)}^{1/2} = 2 \biggl\| \biggl(\sum
_i \E|\alpha_i|^2
\biggr)^{1/2}\biggr \| _{L^q(\cM)}.
\end{eqnarray*}
Note that in the final two inequalities we apply Jensen's inequality and (\ref{eqn:ExpLqLess1}), respectively, using that
$\frac
{p}{2},\frac{q}{2}<1$.
Applying this for $(\al_i^*)$ yields
\[
\biggl(\E \biggl\|\sum_i \alpha_i
\biggr\|_{L^q(\cM)}^p \biggr)^{1/p} \leq \biggl\| \biggl(\sum
_i \E\bigl|\alpha_i^*\bigr|^2
\biggr)^{{1}/{2}} \biggr\| _{L^q(\cM)}.
\]
Let $(\eta_i)$ and $(\theta_i)$ be finite sequences in $S_{q,c}$ and
$S_{q,r}$, respectively, such that $\xi_i = \eta_i + \theta_i$, then
$\xi_i =
\E(\eta_i|\xi_i) - \E(\eta_i) + \E(\theta_i|\xi_i) - \E(\theta_i)$.
Since $(\E(\eta_i|\xi_i) - \E(\eta_i))$ and $(\E(\theta_i|\xi
_i) - \E
(\theta_i))$ are sequences of
independent, mean-zero random variables, we obtain by the triangle
inequality and the above,
\begin{eqnarray*}
\biggl(\E \biggl\|\sum_i \xi_i
\biggr\|_{L^q(\cM)}^p \biggr)^{1/p} & \leq& 2 \biggl\| \biggl(\sum
_i \E\bigl|\E(\eta_i|\xi_i) -
\E(\eta_i)\bigr|^2 \biggr)^{1/2} \biggr\|_{L^q(\cM)}
\\
&&{} + 2 \biggl\| \biggl(\sum_i \E\bigl|\E\bigl(
\theta_i^*\bigr|\xi_i\bigr) - \E\bigl(\theta
_i^*\bigr)\bigr|^2 \biggr)^{{1}/{2}} \biggr\|_{L^q(\cM)}.
\end{eqnarray*}
Therefore, by the triangle inequality in $S_{q,c}$ and $S_{q,r}$ we find
\begin{eqnarray*}
&& \biggl(\E \biggl\|\sum_i \xi_i
\biggr\|_{L^q(\cM)}^p \biggr)^{1/p}
\\
&&\qquad \leq2 \biggl( \biggl\| \biggl(\sum_i \E\bigl|\E(
\eta_i|\xi_i)\bigr|^2 \biggr)^{1/2}
\biggr\|_{L^q(\cM)} + \biggl\| \biggl(\sum_i \E\bigl|\E(\eta
_i)\bigr|^2 \biggr)^{{1}/{2}} \biggr\|_{L^q(\cM)}
\\
&&\qquad\qquad{} + \biggl\| \biggl(\sum_i \E\bigl|\E\bigl(
\theta_i^*\bigr|\xi_i\bigr)\bigr|^2
\biggr)^{1/2} \biggr\|_{L^q(\cM)} + \biggl\| \biggl(\sum
_i \E\bigl|\E\bigl(\theta_i^*\bigr)\bigr|^2
\biggr)^{1/2} \biggr\|_{L^q(\cM)} \biggr)
\\
&&\qquad \leq4 \biggl( \biggl\| \biggl(\sum_i \E|
\eta_i|^2 \biggr)^{1/2} \biggr\| _{L^q(\cM)} + \biggl\|
\biggl(\sum_i \E\bigl|\theta_i^*\bigr|^2
\biggr)^{1/2}\biggr \|_{L^q(\cM)} \biggr).
\end{eqnarray*}
Note that the final step follows directly from Kadison's
inequality for (noncommutative) conditional expectations if $\eta
_i,\theta_i$ are, in addition,
in $L^{\infty}\vNT\cM$. For general $\eta_i$ and $\theta_i$ as above,
the asserted inequality then follows by a density argument. This proves
the first statement.

Suppose now that $2\leq p,q<\infty$. By symmetrization (\ref
{eqn:symmetrization}) and Theorem~\ref{thm:KhintchineNC},
\begin{eqnarray*}
&& 2 \biggl(\E \biggl\|\sum_i \xi_i
\biggr\|_{L^q(\cM)}^p \biggr)^{1/p}
\\
&&\qquad \geq \biggl(\E\E_r \biggl\|\sum_i
r_i \xi_i \biggr\|_{L^q(\cM)}^p
\biggr)^{1/p}
\\
& &\qquad\geq\max \biggl\{ \biggl(\E\biggl \| \biggl(\sum_i |
\xi_i|^2 \biggr)^{1/2} \biggr\|_{L^q(\cM)}^p
\biggr)^{{1}/{p}}, \biggl(\E \biggl\| \biggl(\sum_i
\bigl|\xi_i^*\bigr|^2 \biggr)^{{1}/{2}} \biggr\|_{L^q(\cM)}^p
\biggr)^{1/p} \biggr\}
\\
&&\qquad = \max \biggl\{ \biggl(\E \biggl\|\sum_i |
\xi_i|^2 \biggr\|_{L^{q/2}(\cM
)}^{p/2}
\biggr)^{{1}/{p}}, \biggl(\E \biggl\|\sum_i \bigl|\xi
_i^*\bigr|^2 \biggr\|_{L^{{q}/{2}}(\cM)}^{p/2}
\biggr)^{{1}/{p}} \biggr\}
\\
&&\qquad \geq\max \biggl\{ \biggl\|\sum_i \E|
\xi_i|^2 \biggr\|_{L^{q/2}(\cM
)}^{{1}/{2}}, \biggl\|\sum
_i \E\bigl|\xi_i^*\bigr|^2
\biggr\|_{L^{q/2}(\cM
)}^{{1}/{2}} \biggr\}
\\
&&\qquad = \max \biggl\{ \biggl\| \biggl(\sum_i \E|
\xi_i|^2 \biggr)^{1/2} \biggr\| _{L^q(\cM)}, \biggl\|
\biggl(\sum_i \E\bigl|\xi_i^*\bigr|^2
\biggr)^{1/2} \biggr\| _{L^q(\cM)} \biggr\}.
\end{eqnarray*}
This completes the proof.
\end{pf}
For our discussion in Section~\ref{sec:randomMatrices}, we will keep
track of the dependence of the constants on $p$ and $q$ in the inequalities
(\ref{eqn:2pqqpNC}) and (\ref{eqn:2pqqpNCLower}) below.
%
\begin{theorem}
\label{thm:2pqqpNC} Suppose that $2\leq p,q<\infty$. If $(\xi_i)$ is a
finite sequence of independent, mean-zero $L^q(\cM)$-valued random
variables, then
%
\begin{eqnarray}
\label{eqn:2pqqpNC}&& \biggl(\E \biggl\|\sum_i
\xi_i \biggr\|_{L^q(\cM)}^p \biggr)^{1/p} \nonumber\\
&&\qquad \leq
C_{p,q}(1+\sqrt{2}) \max \biggl\{ \biggl\| \biggl(\sum
_i \E|\xi_i|^2
\biggr)^{1/2} \biggr\|_{L^q(\cM)},
\biggl\| \biggl(\sum_i \E\bigl|\xi_i^*\bigr|^2
\biggr)^{{1}/{2}} \biggr\| _{L^q(\cM
)},
\\
&&\hspace*{181pt}C_{{p}/{2},{q}/{2}} \biggl(\E \biggl(
\sum_i \|\xi_i\| _{L^q(\cM
)}^q
\biggr)^{{p}/{q}} \biggr)^{{1}/{p}} \biggr\},\hspace*{-5pt}\nonumber
\end{eqnarray}
where $C_{p,q} = 2 K_{p,q}<\max\{2\sqrt{2}\sqrt{p-1},2\sqrt{q}\}$ and
$K_{p,q}$ is the constant in (\ref{eqn:KhiNCUpperBigger2}). Moreover,
if $\kappa_{p,q}$ is the constant in (\ref{eqn:Kahane}) then
%
\begin{eqnarray}
\label{eqn:2pqqpNCLower} &&\biggl(\E \biggl\|\sum_i
\xi_i \biggr\|_{L^q(\cM)}^p \biggr)^{1/p} \nonumber
\\
&&\qquad \geq
\frac{1}{2} \max \biggl\{(\kappa_{q,p})^{-1} \biggl(\E
\biggl(\sum_i \| \xi_i\|
_{L^q(\cM)}^q \biggr)^{{p}/{q}} \biggr)^{{1}/{p}},
\\
&&\hspace*{44pt}\qquad
\biggl\| \biggl(\sum_i \E|\xi_i|^2
\biggr)^{{1}/{2}} \biggr\| _{L^q(\cM)},
\biggl\| \biggl(\sum
_i \E\bigl|\xi_i^*\bigr|^2
\biggr)^{{1}/{2}} \biggr\| _{L^q(\cM)} \biggr\}.\nonumber
\end{eqnarray}
\end{theorem}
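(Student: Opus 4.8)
The plan is to mirror the proof of the commutative case (Theorem~\ref{thm:2pqqp}), replacing the scalar Khintchine inequalities by the noncommutative ones of Theorem~\ref{thm:KhintchineNC}; the only genuinely new work is the bookkeeping needed to extract the displayed constants, together with a handful of elementary operator inequalities forced on us by noncommutativity. I would prove the two estimates separately.

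\emph{The lower bound \textup{(\ref{eqn:2pqqpNCLower})}.} The two conditional square function terms follow at once from the second assertion of Lemma~\ref{lem:squareFunEst2NC}, which bounds each of them by $2(\E\|\sum_i\xi_i\|_{L^q(\cM)}^p)^{1/p}$. For the remaining term I would symmetrize via (\ref{eqn:symmetrization}), pass from the $p$-th to the $q$-th Rademacher moment by Kahane's inequality (\ref{eqn:Kahane}), losing the factor $\kappa_{q,p}^{-1}$, and then invoke the sharp cotype $q$ estimate $(\sum_i\|x_i\|_{L^q(\cM)}^q)^{1/q}\leq(\E_r\|\sum_i r_ix_i\|_{L^q(\cM)}^q)^{1/q}$, valid for $q\geq 2$ (obtained by iterating a noncommutative Clarkson inequality $\|a+b\|_{L^q(\cM)}^q+\|a-b\|_{L^q(\cM)}^q\geq 2(\|a\|_{L^q(\cM)}^q+\|b\|_{L^q(\cM)}^q)$); applied pointwise on $\Omega$ and combined with Fubini's theorem this gives the claim with the stated constant $\tfrac12\kappa_{q,p}^{-1}$.

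\emph{The upper bound \textup{(\ref{eqn:2pqqpNC})}.} Symmetrizing and applying the noncommutative Khintchine inequality (\ref{eqn:KhiNCUpperBigger2}) produces the factor $2K_{p,q}=C_{p,q}$ and reduces matters to estimating $\hat a:=(\E\|(\sum_i|\xi_i|^2)^{1/2}\|_{L^q(\cM)}^p)^{1/p}$; the row analogue is obtained by running the same argument for $(\xi_i^*)$, which is again independent and mean-zero with $\|\xi_i^*\|_{L^q(\cM)}=\|\xi_i\|_{L^q(\cM)}$. Now $\hat a^2=(\E\|\sum_i|\xi_i|^2\|_{L^{q/2}(\cM)}^{p/2})^{2/p}$, and the triangle inequality in $L^{p/2}(\Omega;L^{q/2}(\cM))$ splits off the deterministic term $\|\sum_i\E|\xi_i|^2\|_{L^{q/2}(\cM)}$, which is the square of the first term on the right of (\ref{eqn:2pqqpNC}). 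The remainder is governed by the sum of the independent, mean-zero, and---crucially---\emph{self-adjoint} operators $\zeta_i:=|\xi_i|^2-\E|\xi_i|^2\in L^{q/2}(\cM)$; self-adjointness collapses the two square functions produced by Khintchine's inequality into the single term $\|(\sum_i\zeta_i^2)^{1/2}\|_{L^{q/2}(\cM)}$. For $q>4$ I would apply (\ref{eqn:KhiNCUpperBigger2}) to $(\zeta_i)$ at the exponents $(p/2,q/2)$---this is the origin of the nested constant $C_{p/2,q/2}$---bound $(\sum_i\zeta_i^2)^{1/2}$ in the conditional column space of (\ref{eqn:colRowNormLqConditional}) by $(\sum_i|\xi_i|^4)^{1/2}$ plus the analogous expression formed from the means, and estimate the former by factoring the column matrix as $\mathrm{col}(|\xi_i|^2)=\mathrm{diag}(|\xi_i|)\,\mathrm{col}(|\xi_i|)$ and applying noncommutative Hölder; a Cauchy--Schwarz in $L^p(\Omega)$ then yields the geometric mean $\hat a\cdot(\E(\sum_i\|\xi_i\|_{L^q(\cM)}^q)^{p/q})^{1/p}$. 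The error terms built from the means are absorbed into $\|\sum_i\E|\xi_i|^2\|_{L^{q/2}(\cM)}$ by the same column factorization together with the classical trace inequality $\sum_i\tau(b_i^{q/2})\leq\tau((\sum_i b_i)^{q/2})$ for positive $b_i$ (a consequence of McCarthy's inequality). For $2\leq q\leq 4$ one instead uses the type $q/2$ property of $L^{q/2}(\cM)$ through Lemma~\ref{lem:typeCotypeEst}. In each case one is left with a quadratic inequality of the form $\hat a^2\lesssim_{p,q}\hat a\,b+c^2$ (respectively $\hat a^2\lesssim_{p,q}b^2+c^2$ when $q\leq 4$), $b$ and $c$ being the last two terms on the right of (\ref{eqn:2pqqpNC}), and solving it gives $\hat a\lesssim_{p,q}\max\{b,c\}$.

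The main obstacle is that the pointwise square function manipulations of the commutative argument---for instance the Hölder bound $(\sum_i|\xi_i|^4)^{1/4}\leq(\sum_i|\xi_i|^2)^{\theta/2}(\sum_i|\xi_i|^q)^{(1-\theta)/q}$---have no operator counterpart and must be replaced, throughout, by the triangle inequality in the conditional column and row spaces, by factorizations of column matrices combined with noncommutative Hölder, and by trace inequalities of McCarthy type; the self-adjointness of the centered squares $\zeta_i$ is the structural fact that keeps all of this under control. A secondary but delicate point is the careful tracking of the constants in every step, which---unlike in Theorem~\ref{thm:2pqqp}---is needed for the random matrix estimates in Section~\ref{sec:randomMatrices}.
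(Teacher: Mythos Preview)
Your proposal is correct and captures all the essential ideas of the paper's proof: the lower bound via Lemma~\ref{lem:squareFunEst2NC} plus the sharp cotype~$q$ inequality, and the upper bound via iterated symmetrization/Khintchine (exploiting self-adjointness of the centered squares $\zeta_i$), the column factorization $\mathrm{col}(|\xi_i|^2)=\mathrm{diag}(|\xi_i|)\,\mathrm{col}(|\xi_i|)$ together with noncommutative H\"older, and a final quadratic inequality.

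Two minor points of divergence are worth noting. First, you split into cases $q\leq 4$ and $q>4$, carrying over the structure of the commutative proof; the paper does not. The column factorization trick works uniformly for all $q\geq 2$ and replaces precisely the pointwise H\"older interpolation that forced the case split in Theorem~\ref{thm:2pqqp}; for $2\leq q<4$ one may still apply the Khintchine upper bound in $L^{q/2}(\cM)$ to the self-adjoint $\zeta_i$ via the trivial decomposition in the infimum form. Second, you absorb the mean error term $\|(\sum_i|\E|\xi_i|^2|^2)^{1/2}\|_{L^{q/2}(\cM)}$ into $c^2$ using a McCarthy-type trace inequality; the paper instead shows it is dominated by the random term $(\E\|(\sum_i|\xi_i|^4)^{1/2}\|_{L^{q/2}(\cM)}^{p/2})^{2/p}$ via Jensen's inequality applied to the Bochner integral $\E(\mathrm{col}(|\xi_i|^2))$. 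The paper's choice leads to the quadratic inequality $a^2\leq 2C_{p/2,q/2}\,ab+c^2$ and hence directly to the constant $(1+\sqrt{2})\max\{C_{p/2,q/2}b,c\}$ displayed in the theorem; your alternative is perfectly valid but would produce a slightly different explicit constant, which matters for the random matrix estimates in Section~\ref{sec:randomMatrices}.
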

\begin{pf}
We first prove (\ref{eqn:2pqqpNCLower}). By Lemma~\ref{lem:squareFunEst2NC},
\begin{eqnarray*}
&& \max \biggl\{ \biggl\| \biggl(\sum_i \E|
\xi_i|^2 \biggr)^{1/2} \biggr\| _{L^q(\cM)}, \biggl\|
\biggl(\sum_i \E\bigl|\xi_i^*\bigr|^2
\biggr)^{1/2} \biggr\| _{L^q(\cM)} \biggr\}
\\
&&\qquad \leq2 \biggl(\E \biggl\|\sum_i \xi_i
\biggr\|_{L^q(\cM)}^p \biggr)^{{1}/{p}}.
\end{eqnarray*}
By successively applying the cotype $q$ inequality for $L^q(\cM)$,
Kahane's inequalities (\ref{eqn:Kahane}) and (\ref
{eqn:symmetrization}), we see that
%
\begin{eqnarray}
\label{eqn:cotypeqConst1} &&\biggl(\E \biggl(\sum_i \|
\xi_i\|_{L^q(\cM)}^q \biggr)^{p/q}
\biggr)^{1/p} \nonumber\\
&&\qquad \leq \biggl(\E \biggl(\E_r \biggl\|\sum
_i r_i \xi_i
\biggr\|_{L^q(\cM)}^q \biggr)^{{p}/{q}} \biggr)^{{1}/{p}}
\nonumber\\[-8pt]\\[-8pt]\nonumber
&&\qquad \leq\kappa_{q,p} \biggl(\E\E_r \biggl\|\sum
_i r_i \xi_i \biggr\|
_{L^q(\cM
)}^p \biggr)^{{1}/{p}}
\nonumber\\
&&\qquad \leq2 \kappa_{q,p} \biggl(\E \biggl\|\sum_i
\xi_i \biggr\|_{L^q(\cM
)}^p \biggr)^{{1}/{p}}.
\nonumber
\end{eqnarray}
We refer to \cite{Fac87} for a proof that (\ref{eqn:cotypeqConst1})
holds with constant $1$.

We now prove (\ref{eqn:2pqqpNC}). By (\ref{eqn:symmetrization}) and
Theorem~\ref{thm:KhintchineNC}, we have
%
\begin{eqnarray}
\label{eqn:2pqqpNCFirst}&& \biggl(\E \biggl\|\sum_i
\xi_i \biggl\|_{L^q(\cM)}^p \biggr)^{1/p}
\nonumber
\\
&&\qquad \leq
2K_{p,q} \max \biggl\{ \biggl(\E \biggl\| \biggl(\sum
_i |\xi_i|^2 \biggr)^{1/2}
\biggr\|_{L^{q}(\cM)}^{p} \biggr)^{{1}/{p}},\\
&&\hspace*{87pt}
\biggl(\E \biggl\| \biggl(\sum_i \bigl|
\xi_i^*\bigr|^2 \biggr)^{1/2} \biggr\| _{L^{q}(\cM)}^{p}
\biggr)^{{1}/{p}} \biggr\}.\nonumber
\end{eqnarray}
By the triangle inequality in $L^{p/2}(\Om;L^{q/2}(\cM
))$, it follows that
%
\begin{eqnarray}
\label{eqn:2pqqpNCSquare1} \qquad&& \biggl(\E \biggl\| \biggl(\sum_i |
\xi_i|^2 \biggr)^{{1}/{2}} \biggr\| _{L^{q}(\cM
)}^{p}
\biggr)^{{1}/{p}}
\nonumber
\\
&&\qquad = \biggl(\E \biggl\|\sum_i |\xi_i|^2
\biggr\|_{L^{{q}/{2}}(\cM
)}^{p/2} \biggr)^{{1}/{p}}
\\
& &\qquad\leq \biggl( \biggl(\E\biggl \|\sum_i |
\xi_i|^2 - \E|\xi_i|^2 \biggr\|
_{L^{q/2}(\cM)}^{p/2} \biggr)^{2/p} + \biggl\|\sum
_i \E|\xi_i|^2 \biggr\|_{L^{{q}/{2}}(\cM)}
\biggr)^{{1}/{2}}.\nonumber
\end{eqnarray}
We now estimate the first term on the far right-hand side. By applying
(\ref{eqn:symmetrization}) and Theorem~\ref{thm:KhintchineNC} once
again, we obtain
%
\begin{eqnarray}
\label{eqn:2pqqpNCSquare2} && \biggl(\E \biggl\|\sum_i |
\xi_i|^2 - \E|\xi_i|^2
\biggr\|_{L^{q/2}(\cM
)}^{p/2} \biggr)^{2/p}
\nonumber
\\
&&\qquad \leq2K_{{p}/{2},{q}/{2}} \biggl(\E \biggl\| \biggl(\sum_i
\bigl| |\xi_i|^2 - \E|\xi_i|^2\bigr|^2
\biggr)^{{1}/{2}} \biggr\|_{L^{q/2}(\cM
)}^{p/2} \biggr)^{2/p}
\nonumber
\\[-9pt]
\\[-9pt]
\nonumber
& &\qquad\leq C_{{p}/{2},{q}/{2}} \biggl( \biggl(\E \biggl\| \biggl(\sum
_i |\xi _i|^4
\biggr)^{{1}/{2}} \biggr\|_{L^{{q}/{2}}(\cM)}^{p/2} \biggr)^{2/p}\\
&&\hspace*{82pt}{} +
\biggl\| \biggl(\sum_i \bigl|\E|\xi_i|^2\bigr|^2
\biggr)^{{1}/{2}} \biggr\|_{L^{{q}/{2}}(\cM
)} \biggr),\nonumber
\end{eqnarray}
where the final inequality is a consequence of the triangle inequality
in $L^{p/2}(\Om; L^{{q}/{2}}(\cM;\ell^2_c))$. Note that the
second term on the right-hand side is smaller than the first one. Indeed,
\begin{eqnarray}
\label{eqn:2pqqpNCSquare3}&& \biggl\| \biggl(\sum_i \bigl|\E|
\xi_i|^2\bigr|^2 \biggr)^{{1}/{2}} \biggr\|
_{L^{q/2}(\cM)} \nonumber\\
&&\qquad = \bigl\|\operatorname{col}\bigl(\E|\xi_i|^2
\bigr)\bigr\|_{L^{q/2}(\cM
\vNT B(\ell^2))}
\\
&&\qquad =\bigl \|\E\bigl(\operatorname{col}\bigl(|\xi_i|^2\bigr)\bigr)
\bigr\|_{L^{{q}/{2}}(\cM\vNT
B(\ell
^2))}
\nonumber
\\
&&\qquad \leq\E\bigl\|\operatorname{col}\bigl(|\xi_i|^2\bigr)
\bigr\|_{L^{{q}/{2}}(\cM\vNT
B(\ell
^2))}\nonumber
\\
&&\qquad \leq\bigl(\E\bigl\|\operatorname{col}\bigl(|\xi_i|^2\bigr)
\bigr\|_{L^{{q}/{2}}(\cM\vNT
B(\ell
^2))}^{p/2}\bigr)^{2/p}
\nonumber
\\
&&\qquad = \biggl(\E \biggl\| \biggl(\sum_i |
\xi_i|^4 \biggr)^{{1}/{2}} \biggr\| _{L^{{q}/{2}}(\cM)}^{p/2}
\biggr)^{2/p}.\nonumber
\end{eqnarray}
Write $x = \operatorname{col}(|\xi_i|)$ and $y=\operatorname{diag}(|\xi_i|)$ for
the matrices with the $|\xi_i|$ in their first column and diagonal,
respectively, and zeroes elsewhere. By the noncommutative H\"{o}lder
inequality (\ref{eqn:NCHolderLq}),
%
\begin{eqnarray}
\label{eqn:2pqqpNCSquare4} && \biggl(\E \biggl\| \biggl(\sum_i |
\xi_i|^4 \biggr)^{{1}/{2}} \biggr\| _{L^{q/2}(\cM)}^{p/2}
\biggr)^{2/p}
\nonumber
\\
& &\qquad= \bigl(\E\bigl\|\bigl(x^*y^*yx\bigr)^{{1}/{2}}
\bigr\|_{L^{{q}/{2}}(\cM\vNT B(\ell
^2))}^{p/2}
\bigr)^{2/p}
\nonumber
\\
&&\qquad = \bigl(\E\|yx\|_{L^{{q}/{2}}(\cM\vNT B(\ell^2))}^{p/2}\bigr)
^{2/p}
\nonumber
\\[-8pt]
\\[-8pt]
\nonumber
&&\qquad \leq\bigl(\E\bigl| \|y\|_{L^q(\cM\vNT B(\ell^2))}\|x\|_{L^q(\cM\vNT
B(\ell
^2))}\bigr|^{p/2}
\bigr)^{2/p}
\nonumber
\\
&&\qquad \leq\bigl(\E\|y\|_{L^q(\cM\vNT B(\ell^2))}^p\bigr)^{{1}/{p}}\bigl(\E
\|x\| _{L^q(\cM\vNT B(\ell^2))}^p\bigr)^{{1}/{p}}
\nonumber
\\
&&\qquad = \biggl(\E \biggl(\sum_i \|\xi_i
\|_{L^q(\cM)}^q \biggr)^{p/q} \biggr)^{{1}/{p}}
\biggl(\E \biggl\| \biggl(\sum_i |\xi_i|^2
\biggr)^{{1}/{2}} \biggr\|_{L^q(\cM)}^p \biggr)^{{1}/{p}}.\nonumber
\end{eqnarray}
Collecting our estimates (\ref{eqn:2pqqpNCSquare1}), (\ref
{eqn:2pqqpNCSquare2}), (\ref{eqn:2pqqpNCSquare3}) and (\ref
{eqn:2pqqpNCSquare4}), we find the quadratic
inequality
\[
a^2 \leq(2C_{{p}/{2},{q}/{2}}) ab + c^2,
\]
where we set $a = (\E\|(\sum_i |\xi_i|^2)^{{1}/{2}}\|_{L^q(\cM
)}^p)^{{1}/{p}}$, $b=(\E(\sum_i \|\xi_i\|_{L^q(\cM)}^q)^{p/q})^{{1}/{p}}$ and
$c=\|(\sum_i \E|\xi_i|^2)^{{1}/{2}}\|_{L^q(\cM)}$. Solving this
quadratic inequality, we obtain
\[
a\leq\tfrac{1}{2}\bigl(2C_{{p}/{2},{q}/{2}} b + \bigl((2C_{{p}/{2},{q}/{2}}b)^2
+ 4c^2\bigr)^{{1}/{2}}\bigr) \leq\tfrac{1+\sqrt {2}}{2} \max
\{2C_{{p}/{2},{q}/{2}} b, 2c\},
\]
that is,
\begin{eqnarray*}
&& \biggl(\E \biggl\| \biggl(\sum_i |
\xi_i|^2 \biggr)^{{1}/{2}} \biggr\| _{L^q(\cM
)}^p
\biggr)^{{1}/{p}}
\\
&&\qquad \leq(1+\sqrt{2}) \max \biggl\{ \biggl\| \biggl(\sum_i
\E|\xi _i|^2 \biggr)^{{1}/{2}}\biggr \|_{L^q(\cM)},\\
&&\hspace*{104pt}
C_{{p}/{2},{q}/{2}} \biggl(\E \biggl(\sum_i \|
\xi_i\|_{L^q(\cM)}^q \biggr)^{{p}/{q}}
\biggr)^{{1}/{p}} \biggr\}.
\end{eqnarray*}
Applying this to the sequence $(\xi_i^*)$, we obtain
\begin{eqnarray*}
&& \biggl(\E \biggl\| \biggl(\sum_i \bigl|
\xi_i^*\bigr|^2 \biggr)^{{1}/{2}} \biggr\| _{L^q(\cM
)}^p
\biggr)^{{1}/{p}}
\\
&&\qquad \leq(1+\sqrt{2}) \max \biggl\{ \biggl\| \biggl(\sum_i
\E\bigl|\xi _i^*\bigr|^2 \biggr)^{{1}/{2}}
\biggr\|_{L^q(\cM)},\\
&&\hspace*{103pt} C_{{p}/{2},{q}/{2}} \biggl(\E \biggl(\sum
_i \|\xi_i\|_{L^q(\cM)}^q
\biggr)^{{p}/{q}} \biggr)^{{1}/{p}} \biggr\}.
\end{eqnarray*}
Inequality (\ref{eqn:2pqqpNC}) now follows from (\ref{eqn:2pqqpNCFirst}).
\end{pf}
Note that even if $\cM$ is commutative, the proof of Theorem~\ref
{thm:2pqqpNC} is different from the one presented for Theorem~\ref{thm:2pqqp}.
We are now ready to prove Theorem~\ref{thm:summaryRosenthalLqNC}.
\begin{pf*}{Proof of Theorem~\ref{thm:summaryRosenthalLqNC}}
Observe that the spaces
$S_{q,c}$, $S_{q,r}$, $D_{p,q}$ and $D_{q,q}$ have dense intersection
and, therefore, the duality of these individual spaces imply together
with (\ref{eqn:sumIntersectionDuality}) that
\[
(s_{p,q})^* = s_{p',q'},\qquad \frac{1}{p} +
\frac{1}{p'} = 1,\qquad \frac
{1}{q}+\frac{1}{q'} = 1,
\]
with associated duality bracket
\[
\bigl\langle(f_i),(g_i)\bigr\rangle= \sum
_i \E\tr(f_i g_i).
\]
Thus, the lower estimates $\gtrsim_{p,q}$ in (\ref
{eqn:summaryRosenthalLqNC}) can be deduced from the upper ones using
the duality argument presented in (\ref{eqn:dualArgInd1}) and (\ref
{eqn:dualArgInd2}).

The upper estimates $\lesssim_{p,q}$ follow essentially as in the proof
of Theorem~\ref{thm:summaryRosIntro} once we replace the use of
Lemma~\ref{lem:squareFunEst2} and Theorem~\ref{thm:2pqqp} by their
noncommutative versions Lemma~\ref{lem:squareFunEst2NC} and
Theorem~\ref
{thm:2pqqpNC}, respectively. The straightforward modifications are left
to the reader.
\end{pf*}
Before deducing It\^{o} isomorphisms for Poisson stochastic integrals
taking values in a noncommutative $L^q$-space from Theorem~\ref
{thm:summaryRosenthalLqNC}, we take the opportunity to observe some
moment estimates for the norm of a sum of random matrices.

\section{Intermezzo on random matrices}
\label{sec:randomMatrices}

Let us recall the following noncommutative Khintchine inequality for
the operator norm of a Rademacher sum of matrices. Let $d_1,d_2 \in\N$
and set $d=\min\{d_1,d_2\}$. If $x_1,\ldots,x_n$ are $d_1\ti d_2$
random matrices, then there is a constant $C_{p,d}$ depending only on
$p$ and $d$ such that
%
\begin{equation}
\label{eqn:KIOperator}\Biggl(\E \Biggl\|\sum_{i=1}^n
r_i x_i\Biggr \| ^p \Biggr)^{{1}/{p}} \leq
C_{p,d}\max \Biggl\{ \Biggl\| \Biggl(\sum_{i=1}^n
|x_i|^2 \Biggr)^{{1}/{2}} \Biggr\|,\Biggl \| \Biggl(\sum
_{i=1}^n \bigl|x_i^*\bigr|^2
\Biggr)^{{1}/{2}} \Biggr\| \Biggr\}.\hspace*{-25pt}
\end{equation}
Indeed, this inequality can readily be deduced from the noncommutative
Khintchine inequalities for Schatten spaces. Since $\|x\|_{\log d} \leq
e\|x\|\leq e\|x\|_{\log d}$ for any $d_1\ti d_2$ matrix $x$,
\begin{eqnarray*}
&&\Biggl(\E \Biggl\|\sum_{i=1}^n
r_i x_i \Biggr\|^p \Biggr)^{{1}/{p}} \\
&&\qquad \leq
\Biggl(\E \Biggl\|\sum_{i=1}^n r_i
x_i \Biggr\|_{\log d}^{p} \Biggr)^{1/p}
\\
& & \qquad\leq K_{p,\log d}\max \Biggl\{ \Biggl\| \Biggl(\sum
_{i=1}^n |x_i|^2
\Biggr)^{1/2} \Biggr\|_{\log d},\Biggl \| \Biggl(\sum
_{i=1}^n \bigl|x_i^*\bigr|^2
\Biggr)^{{1}/{2}} \Biggr\|_{\log d} \Biggr\}
\\
&&\qquad \leq eK_{p,\log d}\max \Biggl\{ \Biggl\| \Biggl(\sum
_{i=1}^n |x_i|^2
\Biggr)^{{1}/{2}} \Biggr\|, \Biggl\| \Biggl(\sum_{i=1}^n
\bigl|x_i^*\bigr|^2 \Biggr)^{1/2} \Biggr\| \Biggr\}.
\end{eqnarray*}
By the remark following Theorem~\ref{thm:KhintchineNC}, if $2\leq\log
d \leq p$ then
\[
C_{p,d} \leq eK_{p,\log d} \leq e\sqrt{2}\sqrt{p-1}
\]
and $C_{p,d}\leq e\sqrt{\log d}$ if $2\leq p\leq\log d$.
%
\begin{remark}
\label{rem:Seginer}
The Khintchine inequality (\ref{eqn:KIOperator}) cannot hold with a
constant independent of the dimensions $d_1,d_2$. Indeed, it was shown
by Seginer (\cite{Seg00}, Theorem 3.1) that there is an absolute
constant $C$ such that for any $a_{ij}$, $i=1,\ldots,d_1$ $j=1,\ldots
,d_2$ in $\C$ and any $1\leq p\leq2\log\max\{d_1,d_2\}$ the rank one
matrices $x_{ij} = a_{ij}\ot e_{ij}$ satisfy
%
\begin{eqnarray}
\label{eqn:KISeg} && \biggl(\E \biggl\|\sum_{i,j}
r_{ij} x_{ij} \biggr\|^p \biggr)^{{1}/{p}}
\nonumber
\\[-10pt]
\\[-10pt]
\nonumber
&&\qquad \leq C(\log d)^{1/4}\max \biggl\{ \biggl\| \biggl(\sum
_{i,j} |x_{ij}|^2 \biggr)^{{1}/{2}}
\biggr\|, \biggl\| \biggl(\sum_{i,j} \bigl|x_{ij}^*\bigr|^2
\biggr)^{{1}/{2}}\biggr \| \biggr\}.
\end{eqnarray}
Moreover, the order of growth $(\log d)^{1/4}$ in (\ref
{eqn:KISeg}) is optimal (\cite{Seg00}, Theorem 3.2).
\end{remark}
%
\begin{theorem}
\label{thm:sumsRMs} Let $2\leq p<\infty$. If $(\xi_i)$ is a finite
sequence of independent, mean-zero $d_1\ti d_2$ random matrices, then
\begin{eqnarray*}
&&\biggl(\Ex\biggl \|\sum_i \xi_i
\biggr\|^p \biggr)^{{1}/{p}} \\
&&\qquad\leq 2(1+\sqrt {2})C_{p,d}
\max \biggl\{ \biggl\| \biggl(\sum_i \Ex|
\xi_i|^2 \biggr)^{1/2} \biggr\|, \biggl\| \biggl(\sum
_i \Ex\bigl|\xi_i^*\bigr|^2
\biggr)^{1/2}\biggr \|,\\
&&\hspace*{182pt}2C_{{p}/{2},d} \Bigl(\Ex\max_{i} \|\xi_i
\|^p \Bigr)^{1/p} \biggr\},
\end{eqnarray*}
where $d=\min\{d_1,d_2\}$. The reverse inequality holds with constant
$2^{1+1/p}$.
\end{theorem}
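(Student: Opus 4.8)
The plan is to replay the proof of Theorem~\ref{thm:2pqqpNC}, but now for the operator norm in place of the (Schatten) $L^{q}(\cM)$-norm. Concretely, one substitutes the noncommutative Khintchine inequality~(\ref{eqn:KhiNCUpperBigger2}) by the operator-norm Khintchine inequality~(\ref{eqn:KIOperator}), the noncommutative H\"older inequality~(\ref{eqn:NCHolderLq}) by submultiplicativity of the operator norm, and the inequality $\|\Ex x\|_{L^{q/2}(\cM)}\le\Ex\|x\|_{L^{q/2}(\cM)}$ by its elementary operator-norm analogue $\|\Ex x\|\le\Ex\|x\|$. The reverse inequality is a separate, soft argument.

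For the upper bound I would proceed in three steps. \emph{Step 1.} Symmetrization~(\ref{eqn:symmetrization}) followed by~(\ref{eqn:KIOperator}), applied conditionally on $(\xi_i)$ and using that $\sum_i\xi_i$ is $d_1\ti d_2$ (so the Khintchine constant is $C_{p,d}$), gives
\begin{equation*}
\Big(\Ex\Big\|\sum_i\xi_i\Big\|^p\Big)^{\frac{1}{p}}\le 2C_{p,d}\,\max\{A,A^{*}\},\qquad A:=\Big(\Ex\Big\|\Big(\sum_i|\xi_i|^2\Big)^{\frac{1}{2}}\Big\|^p\Big)^{\frac{1}{p}},
\end{equation*}
with $A^{*}$ the analogous quantity for the adjoints $\xi_i^{*}$. \emph{Step 2.} To estimate $A$ (and symmetrically $A^{*}$) I would write $\|(\sum_i|\xi_i|^2)^{1/2}\|^2=\|\sum_i|\xi_i|^2\|$, split off $\sum_i\Ex|\xi_i|^2$ by the triangle inequality, apply symmetrization and~(\ref{eqn:KIOperator}) once more to the self-adjoint, mean-zero sequence $(|\xi_i|^2-\Ex|\xi_i|^2)$, use the triangle inequality for the column norm, note by $\|\Ex x\|\le\Ex\|x\|$ that the term carrying $\sum_i\Ex|\xi_i|^2$ is dominated by the other one, and finally factor $\mathrm{col}(|\xi_i|^2)=\mathrm{diag}(|\xi_i|)\,\mathrm{col}(|\xi_i|)$ to obtain, via submultiplicativity and the Cauchy--Schwarz inequality,
\begin{equation*}
\Big(\Ex\Big\|\Big(\sum_i|\xi_i|^4\Big)^{\frac{1}{2}}\Big\|^{\frac{p}{2}}\Big)^{\frac{2}{p}}\le\Big(\Ex\max_i\|\xi_i\|^p\Big)^{\frac{1}{p}}\cdot A.
\end{equation*}
These steps combine into a quadratic inequality of the form $A^2\le 4C_{p/2,d}\,bA+c^2$ with $b=(\Ex\max_i\|\xi_i\|^p)^{1/p}$ and $c=\|(\sum_i\Ex|\xi_i|^2)^{1/2}\|$, which solves to $A\le(1+\sqrt2)\max\{2C_{p/2,d}\,b,\,c\}$, and likewise for $A^{*}$. \emph{Step 3.} Substituting the bounds for $A,A^{*}$ into Step~1 reproduces the asserted inequality, with constants $2(1+\sqrt2)C_{p,d}$ and $2C_{p/2,d}$.

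The reverse inequality concerns the unweighted maximum $\max\{\|(\sum_i\Ex|\xi_i|^2)^{1/2}\|,\|(\sum_i\Ex|\xi_i^{*}|^2)^{1/2}\|,(\Ex\max_i\|\xi_i\|^p)^{1/p}\}$. The first two terms are the operator-norm form of the lower estimate in Lemma~\ref{lem:squareFunEst2NC}: symmetrization together with the lower half of the Khintchine inequality (which, like its upper half, passes to the operator norm with constant $1$) bounds each by $2(\Ex\|\sum_i\xi_i\|^p)^{1/p}$. For the last term one uses $\max_i\|\xi_i\|=\max_i\|S_i-S_{i-1}\|\le 2\max_k\|S_k\|$, with $S_k=\sum_{i\le k}\xi_i$, together with a L\'evy-type maximal inequality for $\max_k\|S_k\|$; tracking constants produces the bound $2^{1+1/p}(\Ex\|\sum_i\xi_i\|^p)^{1/p}$.

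The one genuinely delicate point --- and the step I expect to need the most care --- is the bookkeeping of dimensions. Since the $\xi_i$ are rectangular, $|\xi_i|^2=\xi_i^{*}\xi_i$ lives in $M_{d_2}$ and $|\xi_i^{*}|^2=\xi_i\xi_i^{*}$ in $M_{d_1}$, so a careless use of~(\ref{eqn:KIOperator}) in Step~2 would carry a constant attached to the ambient matrix size rather than to $d=\min\{d_1,d_2\}$. One must therefore check that every invocation of~(\ref{eqn:KIOperator}) can be performed with the constant $C_{\cdot,d}$ --- equivalently, that the logarithmic dependence is governed by the common rank $\le d$ of the $\xi_i$ and not by $\max\{d_1,d_2\}$ --- while simultaneously keeping the numerical constants at the advertised values. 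Everything else is a line-by-line rerun of the proof of Theorem~\ref{thm:2pqqpNC}.
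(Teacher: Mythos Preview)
Your upper bound argument is precisely the paper's: rerun the proof of Theorem~\ref{thm:2pqqpNC} with the operator-norm Khintchine inequality~(\ref{eqn:KIOperator}) in place of~(\ref{eqn:KhiNCUpperBigger2}), and observe that the diagonal term $\|\mathrm{diag}(|\xi_i|)\|$ equals $\max_i\|\xi_i\|$. The quadratic inequality and the resulting constants match exactly. Your worry about the dimension bookkeeping in Step~2 (the matrices $|\xi_i|^2-\E|\xi_i|^2$ live in $M_{d_2}$, not $M_d$) is legitimate; the paper's one-line proof sketch does not address it either, so you are in no worse shape than the original.

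For the reverse inequality your treatment of the two square-function terms is the same as the paper's (both amount to $\sum_i\E|\xi_i|^2=\E\E_r|\sum_i r_i\xi_i|^2$ followed by $\|\E x\|\le\E\|x\|$ and symmetrization). For the maximal term, however, the paper takes a cleaner route than your partial-sum argument: it first symmetrizes, writing $\max_i\|\xi_i\|=\max_i\|r_i\xi_i\|$, and then applies the L\'evy--Octaviani inequality (\cite{KwW92}, Proposition~1.1.1) directly to the symmetric summands $r_i\xi_i$ to obtain
\[
\Big(\E\E_r\max_i\|r_i\xi_i\|^p\Big)^{1/p}\le 2^{1/p}\Big(\E\E_r\Big\|\sum_i r_i\xi_i\Big\|^p\Big)^{1/p},
\]
after which a single desymmetrization yields the constant $2^{1+1/p}$. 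Your approach via $\max_i\|\xi_i\|\le 2\max_k\|S_k\|$ and a L\'evy-type bound on the partial sums $S_k$ runs into the difficulty that the $\xi_i$ are not symmetric, so the classical L\'evy inequality does not apply directly; symmetrizing first and then passing through partial sums would pick up an extra factor of $2$ and miss the stated constant.
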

\begin{pf}
By repeating the proof of Theorem~\ref{thm:2pqqpNC} using (\ref
{eqn:KIOperator}) instead of the noncommutative Khintchine inequality
(\ref{eqn:KhiNCUpperBigger2}), we find
\begin{eqnarray*}
&&\biggl(\Ex\biggl \|\sum_i \xi_i
\biggr\|^p \biggr)^{{1}/{p}}  \\
&&\qquad\leq 2(1+\sqrt {2})C_{p,d}
\max \biggl\{ \biggl\| \biggl(\sum_i \Ex|
\xi_i|^2 \biggr)^{1/2} \biggr\|, \biggl\| \biggl(\sum
_i \Ex\bigl|\xi_i^*\bigr|^2
\biggr)^{1/2}\biggr \|,
\\
&&\hspace*{179pt}
2C_{{p}/{2},d} \bigl(\Ex\bigl\|\operatorname{diag}(\xi_i)
\bigr\|^p \bigr)^{1/p} \biggr\}.
\end{eqnarray*}
Clearly, $\|\operatorname{diag}(\xi_i)\| = \max_{i}\|\xi_i\|$, so the first
assertion holds.

For the second assertion, let $(r_i)$ be a Rademacher sequence on a
probability space $(\Om_r,\cF_r,\bP_r)$. Then
\begin{eqnarray*}
\Bigl(\E\max_{i} \|\xi_i\|_X^p
\Bigr)^{{1}/{p}} & =& \Bigl(\E\E _r\max_{i}
\|r_i \xi_i\|_X^p
\Bigr)^{{1}/{p}}
\\
& \leq&2^{{1}/{p}} \biggl(\E\E_r\biggl \|\sum
_i r_i\xi_i \biggr\|
_X^p \biggr)^{{1}/{p}}
\\
&\leq& 2^{1+{1}/{p}} \biggl(
\E \biggl\|\sum_i \xi_i
\biggr\|_X^p \biggr)^{{1}/{p}},
\end{eqnarray*}
where the first inequality follows by the L\'evy--Octaviani inequality
in \cite{KwW92}, Proposition 1.1.1. Moreover,
\begin{eqnarray*}
\biggl\| \biggl(\sum_i\E|\xi_i|^2
\biggr)^{{1}/{2}}\biggr \| & =& \biggl\|\E\E_r \sum
_{i,j} r_ir_j\xi_i^*
\xi_j \biggr\|^{{1}/{2}}
\\
& \leq& \biggl(\E\E_r \biggl\|\sum_{i,j}
r_ir_j\xi_i^*\xi_j \biggr\|
\biggr)^{1/2}
\\
& =& \biggl(\E\E_r \biggl\|\sum_{i}
r_i \xi_i \biggr\|^2 \biggr)^{1/2} \leq
2 \biggl(\E \biggl\|\sum_{i} \xi_i
\biggr\|^p \biggr)^{{1}/{p}},
\end{eqnarray*}
where the final inequality follows from (\ref{eqn:symmetrization}).
\end{pf}
As a consequence, we find the following moment inequalities for the
norm of a random matrix with independent, mean-zero entries.
%
\begin{corollary}
\label{cor:indepEntries} Let $2\leq p<\infty$. Suppose that $x_{ij}$,
$i=1,\ldots,d_1$, $j=1,\ldots,d_2$ are independent, mean-zero random
variables in $L^p(\Om)$. If
$x$ is the $d_1\times d_2$ random matrix $(x_{ij})$, then
%
\begin{eqnarray}
\label{eqn:indepEntriesUpper}&& \bigl(\Ex\|x\|^p\bigr)^{{1}/{p}}\nonumber\\
&&\qquad  \leq2(1+
\sqrt{2})C_{p,d} \max \Biggl\{ \max_{j=1,\ldots,d_2} \Biggl(\sum
_{i=1}^{d_1} \Ex x_{ij}^2
\Biggr)^{{1}/{2}}, \max_{i=1,\ldots,d_1} \Biggl(\sum
_{j=1}^{d_2} \Ex x_{ij}^2
\Biggr)^{1/2},
\\
&&\hspace*{217pt}2C_{{p}/{2},d} \Bigl(\Ex\max_{i,j} |x_{ij}|^p
\Bigr)^{1/p} \Biggr\},\nonumber
\end{eqnarray}
with $C_{p,d}<e\max\{\sqrt{\log d},\sqrt{2}\sqrt{p-1}\}$ as in
Theorem~\ref{thm:sumsRMs}.
\end{corollary}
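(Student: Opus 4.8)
The plan is to deduce this from Theorem~\ref{thm:sumsRMs} by decomposing $x$ into rank-one pieces, one for each entry. Write $x = \sum_{i=1}^{d_1}\sum_{j=1}^{d_2} x_{ij}\, e_{ij}$, where $e_{ij}$ denotes the $d_1\ti d_2$ matrix unit with a $1$ in position $(i,j)$ and zeroes elsewhere, and set $\xi_{ij} = x_{ij}\, e_{ij}$. Since the scalar random variables $x_{ij}$ are independent and mean-zero, the doubly indexed (hence, after relabeling, singly indexed) family $(\xi_{ij})$ is a finite sequence of independent, mean-zero $d_1\ti d_2$ random matrices with $\sum_{i,j}\xi_{ij} = x$, so Theorem~\ref{thm:sumsRMs} applies directly.

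It then only remains to evaluate the three terms on the right-hand side of that theorem. Using $e_{ij}^* = e_{ji}$ and $e_{ji}e_{ij} = e_{jj}$ we get $|\xi_{ij}|^2 = \xi_{ij}^*\xi_{ij} = x_{ij}^2\, e_{jj}$, so that $\sum_{i,j}\Ex|\xi_{ij}|^2 = \sum_{j=1}^{d_2}\big(\sum_{i=1}^{d_1}\Ex x_{ij}^2\big) e_{jj}$ is a diagonal matrix, whence
\[
\Big\|\Big(\sum_{i,j}\Ex|\xi_{ij}|^2\Big)^{\frac12}\Big\| = \max_{j=1,\ldots,d_2}\Big(\sum_{i=1}^{d_1}\Ex x_{ij}^2\Big)^{\frac12}.
\]
Symmetrically, from $e_{ij}e_{ji} = e_{ii}$ one obtains $|\xi_{ij}^*|^2 = x_{ij}^2\, e_{ii}$, hence $\|(\sum_{i,j}\Ex|\xi_{ij}^*|^2)^{1/2}\| = \max_{i=1,\ldots,d_1}(\sum_{j=1}^{d_2}\Ex x_{ij}^2)^{1/2}$. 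Finally $\|\xi_{ij}\| = |x_{ij}|\,\|e_{ij}\| = |x_{ij}|$, so $\max_{i,j}\|\xi_{ij}\| = \max_{i,j}|x_{ij}|$. Substituting these three identities into the conclusion of Theorem~\ref{thm:sumsRMs} yields exactly~(\ref{eqn:indepEntriesUpper}), with the same constants; the bound $C_{p,d} < e\max\{\sqrt{\log d},\sqrt{2}\sqrt{p-1}\}$ is the one already recorded before Theorem~\ref{thm:sumsRMs}.

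There is essentially no obstacle here: the argument is a direct specialization, and the only thing to watch is the bookkeeping with the matrix units — in particular that $x_{ij}^*x_{ij}$ sits on the diagonal of the $d_2\ti d_2$ factor ($|x|^2 = x^*x$) whereas $x_{ij}x_{ij}^*$ sits on the diagonal of the $d_1\ti d_1$ factor ($|x^*|^2 = xx^*$), which is precisely why the two square-function terms collapse to column-wise and row-wise maxima, respectively. One could in the same way transfer the reverse inequality of Theorem~\ref{thm:sumsRMs}, obtaining a matching lower bound with constant $2^{1+1/p}$, but since only the upper estimate is stated in the corollary this is omitted.
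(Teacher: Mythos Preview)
Your proof is correct and essentially identical to the paper's own argument: decompose $x$ as $\sum_{i,j} x_{ij}\,e_{ij}$, apply Theorem~\ref{thm:sumsRMs} to the independent rank-one pieces, and compute the three terms via the matrix-unit identities $e_{ji}e_{ij}=e_{jj}$, $e_{ij}e_{ji}=e_{ii}$. The only difference is notational (the paper writes $y_{ij}=x_{ij}\otimes e_{ij}$).
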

\begin{pf}
Let $e_{ij}$ be the $d_1\ti d_2$ matrix having $1$ in entry $(i,j)$ and
zeroes elsewhere. Set $y_{ij} = x_{ij} \ot e_{ij}$, then $(y_{ij})$ is
a doubly indexed sequence of
independent, mean-zero random matrices and $x=\sum_{i,j} y_{ij}$.
Notice that
\[
y_{ij}^*y_{ij} = x_{ij}^2\ot
e_{ji}e_{ij} = x_{ij}^2\ot
e_{jj},
\]
so
\[
\biggl\| \biggl(\sum_{i,j} \E|y_{ij}|^2
\biggr)^{{1}/{2}}\biggr \| = \biggl\|\sum_{j} \biggl(\sum
_{i} \E x_{ij}^2
\biggr)^{{1}/{2}}\ot e_{jj} \biggr\| = \max_{j}
\biggl(\sum_{i} \Ex x_{ij}^2
\biggr)^{{1}/{2}}.
\]
Moreover,
\[
y_{ij}y_{ij}^* = x_{ij}^2\ot
e_{ij}e_{ji} = x_{ij}^2\ot
e_{ii}
\]
and, therefore,
\[
\biggl\| \biggl(\sum_{i,j} \E\bigl|y_{ij}^*\bigr|^2
\biggr)^{{1}/{2}} \biggr\| = \biggl\| \sum_{i} \biggl(
\sum_{j} \E x_{ij}^2
\biggr)^{{1}/{2}}\ot e_{ii} \biggr\| = \max_{i}
\biggl(\sum_{j} \Ex x_{ij}^2
\biggr)^{{1}/{2}}.
\]
Finally, it is clear that
\[
\Bigl(\Ex\max_{i,j}\|y_{ij}\|^p
\Bigr)^{{1}/{p}} = \Bigl(\Ex\max_{i,j}
|x_{ij}|^p \Bigr)^{{1}/{p}}.
\]
The result now follows from Theorem~\ref{thm:sumsRMs}.
\end{pf}
In \cite{Lat05}, Lata{\l}a showed that there is a universal constant
$C>0$ such that
%
\begin{eqnarray}
\label{eqn:Latala} \Ex\|x\| &\leq & C \Biggl(\max_{i=1,\ldots,d_1} \Biggl(\sum
_{j=1}^{d_2} \Ex x_{ij}^2
\Biggr)^{{1}/{2}} + \max_{j=1,\ldots,d_2} \Biggl(\sum
_{i=1}^{d_1} \Ex x_{ij}^2
\Biggr)^{{1}/{2}}
\nonumber
\\[-8pt]
\\[-8pt]
\nonumber
&&\hspace*{129pt}\qquad{}+ \biggl(\sum_{i,j} \Ex
x_{ij}^4 \biggr)^{1/4} \Biggr)
\end{eqnarray}
for any random matrix $x=(x_{ij})$ with independent, mean-zero entries
in $L^4(\Om)$. To compare this result to Corollary~\ref
{cor:indepEntries}, observe that (\ref{eqn:Latala}) implies together with~(\ref{eqn:HJTKSIntro}) that there is a universal constant $C>0$ such
that for all $1\leq p<\infty$,
%
\begin{eqnarray}
\label{eqn:Latala2} \bigl(\Ex\|x\|^p\bigr)^{{1}/{p}} & \leq& C
\frac{p}{\log p} \Biggl(\max_{i=1,\ldots,d_1} \Biggl(\sum
_{j=1}^{d_2} \Ex x_{ij}^2
\Biggr)^{{1}/{2}} + \max_{j=1,\ldots,d_2} \Biggl(\sum
_{i=1}^{d_1} \Ex x_{ij}^2
\Biggr)^{1/2}
\nonumber
\\[-8pt]
\\[-8pt]
\nonumber
&&{} + \biggl(\sum_{i,j} \Ex x_{ij}^4
\biggr)^{1/4} + \Bigl(\Ex \max_{i,j}
|x_{ij}|^p \Bigr)^{{1}/{p}} \Biggr).
\end{eqnarray}
The upper bound in Corollary~\ref{cor:indepEntries} exhibits different
growth behavior in $p$ and does not contain the factor $(\sum_{i,j}
\Ex
x_{ij}^4)^{1/4}$. In particular, the bound (\ref
{eqn:indepEntriesUpper}) is applicable to random matrices having
entries with infinite fourth moment. On the
other hand, note that the bound in (\ref{eqn:Latala2}) is of order
$\sqrt{d}$ for matrices with uniformly bounded entries, which is
optimal for $d\rightarrow\infty$ (see the discussion in \cite{Lat05}).
Through the use of the noncommutative Khintchine inequality in our
proof, we incur an extra factor of order $\sqrt{\log d}$. As the order
$(\log d)^{1/4}$ of the constant in (\ref{eqn:KISeg}) is
optimal, this additional factor is an inevitable product of our method.

\section{It\^{o}-isomorphisms: Noncommutative $L^q$-spaces}
\label{sec:ItoPoissonNC}

We now present an extension of Theorem~\ref{thm1.1} for
integrands taking values in a noncommutative $L^q$-space. In the
statement of our main result, we will use the following noncommutative
$L^2$-valued $L^q$-spaces, which were introduced by Pisier in \cite
{Pis98} and treated in more detail in \cite{JMX06}. For any simple
function on a measure space $(E,\cE,\mu)$ with values in $L^q(\cM)$,
$F=\sum_i \chi_{E_i} x_i$ say, we set
\begin{eqnarray*}
\|F\|_{L^q(\cM;L^2(\R_+\ti J)_c)} & =& \biggl\| \biggl(\sum_i
|x_i|^2 \mu (E_i) \biggr)^{{1}/{2}}
\biggr\|_{L^q(\cM)},
\\
\|F\|_{L^q(\cM;L^2(\R_+\ti J)_r)} & =& \biggl\| \biggl(\sum_i
\bigl|x_i^*\bigr|^2 \mu (E_i) \biggr)^{{1}/{2}}
\biggr\|_{L^q(\cM)}.
\end{eqnarray*}
It can be shown that these expression define two norms on the simple
functions, and we let $L^q(\cM;L^2(E)_c)$ and $L^q(\cM;L^2(E)_r)$
denote the respective completions in these norms. Alternatively, one
can describe these spaces as complemented subspaces of $L^q(\cM\vNT
B(L^2(E)))$ and in this way one can show that for $1<q,q'<\infty$ with
$\frac{1}{q} + \frac{1}{q'} = 1$,
%
\begin{eqnarray}
\label{eqn:DualRCL2} \bigl(L^q\bigl(\cM;L^2(E)_c
\bigr)\bigr)^* & =& L^{q'}\bigl(\cM;L^2(E)_r
\bigr),
\nonumber
\\[-8pt]
\\[-8pt]
\nonumber
\bigl(L^q\bigl(\cM;L^2(E)_r\bigr)\bigr)^* &
=& L^{q'}\bigl(\cM;L^2(E)_c\bigr).
\end{eqnarray}
We refer to Chapter~2 of \cite{JMX06} for details. Now, for any $1\leq
p,q<\infty$ we set
\[
\cS_{q,c}^p = L^p\bigl(\Om;L^q
\bigl(\cM;L^2(\R_+\ti J)_c\bigr)\bigr),\qquad \cS
_{q,r}^p = L^p\bigl(\Om;L^q\bigl(
\cM;L^2(\R_+\ti J)_r\bigr)\bigr).
\]
Since $L^q(\cM;L^2(\R_+\ti J)_c)$ and $L^q(\cM;L^2(\R_+\ti J)_r)$ can
be identified with closed subspaces of $L^q(\cM\vNT B(L^2(\R_+\ti
J)))$, they are reflexive if $1<q<\infty$. Therefore, it follows from
(\ref{eqn:DualRCL2}) that for any $1<p,q<\infty$,
%
\begin{equation}
\label{eqn:dualSqpNC} \qquad\bigl(\cS_{q,c}^p\bigr)^* =
\cS_{q',r}^{p'},\qquad \bigl(\cS_{q,r}^p
\bigr)^* = \cS_{q',c}^{p'},\qquad \biggl(\frac{1}{p} +
\frac{1}{p'} = 1, \frac{1}{q} + \frac{1}{q'} = 1\biggr).
\end{equation}
If $\cM$ is commutative, then $\cS_{q,c}^p$ and $\cS_{q,r}^p$ coincide
and are equal to the Bochner space $S_q^p=L^p(\Om;L^q(S;L^2(\R_+\ti
J)))$ considered earlier.

We are now ready to prove our main theorem.
%
\begin{theorem}
\label{thm:summarySILqPoissonNC} Let $1<p,q<\infty$. For any $B\in
\cJ$,
any $t>0$ and any simple, adapted $L^q(\cM)$-valued process $F$,
%
\begin{equation}
\label{eqn:summarySILqPoissonNC} \biggl(\E\sup_{0<s\leq t} \biggl\| \int
_{(0,s]\ti B} F \,d\tilde{N}\biggr \|_{L^q(\cM)}^p
\biggr)^{{1}/{p}} \simeq_{p,q} \|F\chi_{(0,t]\ti
B}
\|_{\cI_{p,q}},
\end{equation}
where $\cI_{p,q}$ is given by
\begin{eqnarray*}
\cS_{q,c}^p \cap\cS_{q,r}^p \cap
\cD_{q,q}^p \cap\cD_{p,q}^p &\qquad&
\mathrm{if}\ 2\leq q\leq p<\infty,
\\
\cS_{q,c}^p \cap\cS_{q,r}^p \cap
\bigl(\cD_{q,q}^p + \cD_{p,q}^p\bigr) &\qquad&
\mathrm{if}\ 2\leq p\leq q<\infty,
\\
\bigl(\cS_{q,c}^p \cap\cS_{q,r}^p
\cap\cD_{q,q}^p\bigr) + \cD_{p,q}^p &\qquad&
\mathrm{if}\ 1<p<2\leq q<\infty,
\\
\bigl(\cS_{q,c}^p + \cS_{q,r}^p +
\cD_{q,q}^p\bigr) \cap\cD_{p,q}^p &\qquad&
\mathrm {if}\ 1<q<2\leq p<\infty,
\\
\cS_{q,c}^p + \cS_{q,r}^p + \bigl(
\cD_{q,q}^p \cap\cD_{p,q}^p\bigr) &\qquad&
\mathrm {if}\ 1<q\leq p\leq2,
\\
\cS_{q,c}^p + \cS_{q,r}^p +
\cD_{q,q}^p + \cD_{p,q}^p &\qquad& \mathrm{if}\
1<p\leq q\leq2.
\end{eqnarray*}
\end{theorem}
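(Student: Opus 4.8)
The plan is to follow the proof of Theorem~\ref{thm:summarySILqPoisson} line by line, replacing the commutative Rosenthal inequality (Theorem~\ref{thm:summaryRosIntro}) by Theorem~\ref{thm:summaryRosenthalLqNC} and the duality $(\cS_q^p)^*=\cS_{q'}^{p'}$ by the identities in \eqref{eqn:dualSqpNC}. Since $(\int_{(0,s]\ti B}F\,d\tilde{N})_{s>0}$ is an $L^q(\cM)$-valued martingale, $s\mapsto\|\int_{(0,s]\ti B}F\,d\tilde{N}\|_{L^q(\cM)}$ is a positive submartingale in $L^p(\Om)$, so Doob's maximal inequality reduces the left-hand side of \eqref{eqn:summarySILqPoissonNC} to $(\E\|\int_{(0,t]\ti B}F\,d\tilde{N}\|_{L^q(\cM)}^p)^{1/p}$ up to the factor $p'=p/(p-1)$. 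Because $L^q(\cM)$ is UMD for $1<q<\infty$, the decoupling inequality \eqref{eqn:decouplingSIIntro} lets us pass to the decoupled integral $\int_{(0,t]\ti B}F\,d\tilde{N}^c=\sum_{i,j}d_{i,j}$, where $d_{i,j}=y_{i,j}\tilde{N}_{i,j}^c$ and $y_{i,j}=\sum_k F_{i,j,k}\otimes x_{i,j,k}$, exactly as in Section~\ref{sec:ItoPoissonClas}; taking Remark~\ref{rem:assumptionSimpleProcess} into account we may assume $(t_{i+1}-t_i)\nu(A_j)\le 1$.

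For fixed $\om\in\Om$ the family $(d_{i,j}(\om))_{i,j}$ is a finite sequence of independent, mean-zero elements of $L^q(\cM)$, so Theorem~\ref{thm:summaryRosenthalLqNC} applies pointwise; taking $L^p(\Om)$-norms then produces the right-hand side of \eqref{eqn:summarySILqPoissonNC} once the four building blocks are identified. Since $\tilde{N}_{i,j}^c$ is a real scalar commuting with $y_{i,j}$, one has $|d_{i,j}|^2=|\tilde{N}_{i,j}^c|^2|y_{i,j}|^2$ and $|d_{i,j}^*|^2=|\tilde{N}_{i,j}^c|^2|y_{i,j}^*|^2$, so Lemma~\ref{lem:PoissonMoments} and Remark~\ref{rem:assumptionSimpleProcess} give, pointwise in $\Om$, that $\|(d_{i,j})\|_{S_{q,c}}=\|(\sum_{i,j}(t_{i+1}-t_i)\nu(A_j)|y_{i,j}|^2)^{1/2}\|_{L^q(\cM)}$ and $\|(d_{i,j})\|_{S_{q,r}}=\|(\sum_{i,j}(t_{i+1}-t_i)\nu(A_j)|y_{i,j}^*|^2)^{1/2}\|_{L^q(\cM)}$; taking $L^p(\Om)$-norms yields $\|F\|_{\cS_{q,c}^p}$ and $\|F\|_{\cS_{q,r}^p}$, while the two $D$-terms produce $\|F\|_{\cD_{q,q}^p}$ and $\|F\|_{\cD_{p,q}^p}$ verbatim as in \eqref{eqn:compSI1}--\eqref{eqn:compSI3}. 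Combining the pointwise $\simeq_{p,q}$ with the elementary equivalence between the $L^p(\Om)$-norm of a finite maximum and the maximum of the $L^p(\Om)$-norms settles the case $2\le q\le p<\infty$ directly, since there the right-hand side of \eqref{eqn:summarySILqPoissonNC} is the intersection of all four spaces.

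For the dual case $1<p\le q\le 2$, in which $\cI_{p,q}$ is a sum of four spaces, I would repeat the $\cI_{\mathrm{elem}}$-density argument of Section~\ref{sec:ItoPoissonClas}: fix $\eps>0$, choose a near-optimal decomposition of $F\chi_{(0,t]\ti B}$ into elements of $\cI_{\mathrm{elem}}$ (now simple $L^q(\cM)$-valued functions), replace each summand by its conditional expectation over the $\si$-algebra $\cA$ generated by the atoms $(t_i,t_{i+1}]\ti A_j$ (Lemma~\ref{lem:conditionSimple}), apply Theorem~\ref{thm:summaryRosenthalLqNC} pointwise to the decoupled integral, and bound each resulting term using Kadison's inequality for $\E(\cdot|\cA)$ exactly as in \eqref{eqn:1pq2EstPf} and its noncommutative analogue in the proof of Lemma~\ref{lem:squareFunEst2NC}. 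This gives $\lesssim_{p,q}$. The reverse inequality then follows by duality: $S_{q,c}$, $S_{q,r}$, $D_{q,q}$, $D_{p,q}$ have dense intersection, so \eqref{eqn:sumIntersectionDuality} combined with $(\cS_{q,c}^p)^*=\cS_{q',r}^{p'}$, $(\cS_{q,r}^p)^*=\cS_{q',c}^{p'}$ (from \eqref{eqn:dualSqpNC}), $(\cD_{q,q}^p)^*=\cD_{q',q'}^{p'}$ and $(\cD_{p,q}^p)^*=\cD_{p',q'}^{p'}$ identifies $\cI_{p,q}^*$ with $\cI_{p',q'}$; pairing $F$ against $G\in\cI_{\mathrm{elem}}$, replacing $G$ by $\E(G|\cA)$, rewriting the bracket as $\langle\int_{(0,t]\ti B}F\,d\tilde{N}^c,\int_{(0,t]\ti B}G\,d\tilde{N}^c\rangle$, invoking Hölder's inequality and the already-established upper bound for the $\cI_{p',q'}$-factor (here $2\le q'\le p'<\infty$), and taking the supremum over $G$ finishes this case. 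The four remaining cases follow the same scheme, along the template used for classical $L^q$-spaces in Appendix~\ref{app:remainingCases}, with $\cS_q^p$ systematically split into $\cS_{q,c}^p$ and $\cS_{q,r}^p$.

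The main obstacle is bookkeeping rather than anything conceptual: one must carry the row/column (adjoint) dichotomy consistently through the decoupling, the pointwise application of Theorem~\ref{thm:summaryRosenthalLqNC}, and — most delicately — the duality step, where, unlike in the commutative case, $\cS_{q,c}^p$ is not self-dual and must be paired with $\cS_{q',r}^{p'}$; aligning the column/row labels in \eqref{eqn:DualRCL2}--\eqref{eqn:dualSqpNC} with the conditional column and row spaces of \eqref{eqn:colRowNormLqConditional} is the only point that genuinely demands care. A minor additional technicality is the density of $\cI_{\mathrm{elem}}$ in $\cS_{q,c}^p$ and $\cS_{q,r}^p$, which holds since these are Bochner spaces over $(\Om,\cF,\bP)$ with values in the column, resp.\ row, space $L^q(\cM;L^2(\R_+\ti J)_{c})$, resp.\ $L^q(\cM;L^2(\R_+\ti J)_{r})$, in which the relevant simple functions are dense; once these points are addressed the computations are identical to those in Section~\ref{sec:ItoPoissonClas}.
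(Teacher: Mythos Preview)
Your proposal is correct and follows essentially the same approach as the paper: reduce via Doob and decoupling, apply Theorem~\ref{thm:summaryRosenthalLqNC} pointwise in $\Om$, identify the four building blocks (now with the column/row split) using Lemma~\ref{lem:PoissonMoments}, and obtain the lower bounds by the duality $\cI_{p,q}^*=\cI_{p',q'}$ coming from \eqref{eqn:dualSqpNC} and \eqref{eqn:sumIntersectionDuality}. The paper likewise only spells out the cases $2\le q\le p$ and $1<p\le q\le 2$ and leaves the rest to the template of Appendix~\ref{app:remainingCases}; the one minor remark is that the contractivity of $\E(\cdot|\cA)$ on $\cS_{q,c}^p$, $\cS_{q,r}^p$ needs no appeal to Kadison's inequality, since $\cA$ is a sub-$\sigma$-algebra of the commutative factor $\R_+\times J$ and ordinary Jensen suffices.
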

\begin{pf}
The proof is similar to the one for Theorem~\ref
{thm1.1}, we sketch the main differences in the cases
$2\leq q\leq p<\infty$ and $1<p\leq q\leq2$. Since $L^q(\cM)$ is a UMD
space if $1<q<\infty$, by the decoupling inequality (\ref
{eqn:decouplingSIIntro}) and Doob's maximal inequality it suffices to
show that
\[
\biggl(\E\E_c \biggl\|\int_{(0,t]\ti B} F \,d
\tilde{N}^c \biggr\| _{L^q(\cM
)}^p \biggr)^{{1}/{p}}
\simeq_{p,q} \|F\chi_{(0,t]\ti B}\|_{\cI_{p,q}}.
\]
Let $F$ be the simple adapted process given in (\ref
{eqn:simpleBanach}), taking Remark~\ref{rem:assumptionSimpleProcess}
into account. We may assume that $t=t_{l+1}$ and $B=\bigcup_{j=1}^m A_j$.
We write $\tilde{N}_{i,j}^c:= \tilde{N}^c((t_i,t_{i+1}]\ti A_j)$ for
brevity.

\emph{Case $2\leq q\leq p<\infty$}: Set $y_{i,j} = \sum_{k=1}^n
F_{i,j,k} \oto x_{i,j,k}$ and $d_{i,j} = y_{i,j}\tilde{N}_{i,j}^c$,
then clearly
%
\begin{equation}
\label{eqn:stochIntIsSum} \int_{(0,t]\ti B} F \,d\tilde{N}^c = \sum
_{i,j} \,d_{i,j}.
\end{equation}
Moreover, for every fixed $\omega\in\Om$ the random variables
$d_{i,j}(\omega
)$ are independent and mean-zero. Therefore, we can apply Theorem~\ref
{thm:summaryRosenthalLqNC} pointwise in $\Om$ and subsequently take the
$L^p(\Om)$-norm on both sides to obtain
\begin{eqnarray*}
&& \biggl(\E\E_c \biggl\|\sum_{i,j}
d_{i,j} \biggr\|_{L^q(\cM)}^p \biggr)^{1/p}
\\
&&\qquad \lesssim_{p,q} \max \biggl\{ \biggl(\E \biggl\| \biggl(\sum
_{i,j} \E _{c}|d_{i,j}|^2
\biggr)^{{1}/{2}} \biggr\|_{L^q(\cM)}^p \biggr)^{1/p},\\
&&\hspace*{72pt}
\biggl(\E \biggl\| \biggl(\sum_{i,j} \E_{c}\bigl|d_{i,j}^*\bigr|^2
\biggr)^{1/2} \biggr\|_{L^q(\cM)}^p \biggr)^{{1}/{p}},
\\
&&\hspace*{72pt} \biggl(\E \biggl(\sum_{i,j} \E_{c}
\|d_{i,j}\|_{L^q(\cM)}^q \biggr)^{p/q}
\biggr)^{{1}/{p}}, \biggl(\sum_{i,j} \E
\E_c\|d_{i,j}\| _{L^q(\cM
)}^p
\biggr)^{{1}/{p}} \biggr\}
\\
&&\qquad \simeq_{p,q} \max\bigl\{\|F\|_{\cS_{q,c}^p}, \|F\|_{\cS_{q,r}^p}, \|F
\| _{\cD_{q,q}^p}, \|F\|_{\cD_{p,q}^p}\bigr\},
\end{eqnarray*}
where the final step follows by calculations analogous to (\ref
{eqn:compSI1}), (\ref{eqn:compSI2}) and (\ref{eqn:compSI3}).

\emph{Case $1<p\leq q\leq2$}: Let $\cI_{\mathrm{elem}}$ denote the
algebraic tensor product
\[
\cI_{\mathrm{elem}} = L^{\infty}(\Om) \ot L^{\infty}(\R_+)\ot
\bigl(L^1\cap L^{\infty}\bigr) (\cJ)\ot\bigl(L^1
\cap L^{\infty}\bigr) (\cM).
\]
Since this linear space is dense in $\cS_{q,c}^p$, $\cS_{q,r}^p$,
$\cD
_{p,q}^p$ and $\cD_{q,q}^p$, we can find, for any fixed $\eps>0$, a
decomposition $F=F_1+F_2+F_3+F_4$ with $F_{\alpha} \in\cI_{\mathrm
{elem}}$ such that
\[
\|F_1\|_{\cS_{q,c}^p} + \|F_2\|_{\cS_{q,r}^p} +
\|F_3\|_{\cD
_{p,q}^p} + \|F_4\|_{\cD_{q,q}^p} \leq\|F
\|_{\cI_{p,q}} + \eps.
\]
We may assume that the $F_{\al}$ have the same support in $\R_+\ti J$
as $F$. Let $\cA$ be the sub-$\sigma$-algebra of $\cB(\R_+)\ti\cJ$
generated by the sets $(t_i,t_{i+1}]\ti A_j$. By Lemma~\ref
{lem:conditionSimple} $\E(F_{\alpha}|\cA)$ is of the form
\[
\E(F_{\alpha}|\cA) = \sum_{i,j,k}
F_{i,j,k,\alpha} \oto\chi _{(t_i,t_{i+1}]} \oto\chi_{A_j} \oto
x_{i,j,k,\alpha}\qquad (\alpha=1,2,3,4).
\]
Let $y_{i,j,\alpha} = \sum_{k=1}^n F_{i,j,k,\alpha} \oto
x_{i,j,k,\alpha
}$ and set $d_{i,j,\alpha} = y_{i,j,\alpha}\tilde{N}_{i,j}^c$, then
(\ref{eqn:stochIntIsSum}) holds and
\[
d_{i,j} = d_{i,j,1} + d_{i,j,2} + d_{i,j,3} +
d_{i,j,4}.
\]
By computations similar to (\ref{eqn:compSI1}), (\ref{eqn:compSI2}) and
(\ref{eqn:compSI3}),
\begin{eqnarray*}
\bigl\|(d_{i,j,1})\bigr\|_{S_{q,c}^p} & =& \bigl\|\E(F_1|\cA)
\bigr\|_{\cS_{q,c}^p} \leq\| F_1\|_{\cS_{q,c}^p},
\\
\bigl\|(d_{i,j,2})\bigr\|_{S_{q,r}^p} & =& \bigl\|\E(F_2|\cA)
\bigr\|_{\cS_{q,r}^p} \leq\| F_2\|_{\cS_{q,r}^p},
\\
\bigl\|(d_{i,j,3})\bigr\|_{D_{p,q}^p} & \simeq&\hspace*{-2pt}_p\hspace*{2pt} \bigl\|
\E(F_3|\cA)\bigr\|_{\cD_{p,q}^p} \leq\|F_3\|_{\cD_{p,q}^p},
\\
\bigl\|(d_{i,j,4})\bigr\|_{D_{q,q}^p} & \simeq&\hspace*{-2pt}_q\hspace*{2pt} \bigl\|
\E(F_4|\cA)\bigr\|_{\cD_{q,q}^p} \leq\|F_4\|_{\cD_{q,q}^p}.
\end{eqnarray*}
By applying Theorem~\ref{thm:summaryRosenthalLqNC} pointwise in $\Om$
and subsequently taking $L^p(\Om)$-norms on both sides, we conclude that
\begin{eqnarray*}
&& \biggl(\E\E_c\biggl \|\int_{(0,t]\ti B} F \,d
\tilde{N}^c \biggr\| _{L^q(\cM
)}^p \biggr)^{{1}/{p}}
\\
&&\qquad \lesssim_{p,q} \|F_1\|_{\cS_{q,c}^p} +
\|F_2\|_{\cS_{q,r}^p} + \| F_3\| _{\cD_{p,q}^p} +
\|F_4\|_{\cD_{q,q}^p} \leq\|F\|_{\cI_{p,q}} + \eps.
\end{eqnarray*}
For the reverse estimate, observe that if $p',q'$ are the H\"{o}lder
conjugates of $p$ and~$q$, then in view of (\ref{eqn:dualSqpNC}) and
(\ref{eqn:sumIntersectionDuality}), we have $\cI_{p,q}^* = \cI
_{p',q'}$, with associated duality bracket
\[
\langle F,G\rangle= \int_{\Om\ti\R_+\ti J} \tr(FG) \,d\bP \,dt \,d\nu.
\]
The reverse inequality can therefore be deduced using the duality
argument (\ref{eqn:simpleEstDualSI}) explained in the proof of
Theorem~\ref{thm1.1}.
\end{pf}
Let us make a detailed comparison of our main result with the existing
results in the literature. We restrict our attention to \cite
{BrH09,Hau11,MPR10,MaR10} and refer to the references in these papers
for earlier achievements. In \cite{MPR10}, Marinelli, Pr{\'e}v{\^o}t and
R\"
{o}ckner showed using It\^{o}'s formula that if $H$ is a Hilbert space
and $2\leq p<\infty$, then
%
\begin{eqnarray}
\label{eqn:MPR} && \biggl(\E\sup_{0<s\leq t} \biggl\|\int
_{(0,s]\ti B} F \,d\tilde {N} \biggr\| _{H}^p
\biggr)^{{1}/{p}}
\nonumber\\[-8pt]  \\[-8pt]
&&\qquad \lesssim_{p,t} \biggl(\E\int_{(0,t]} \biggl(\int
_{B} \|F\|_H^2 \,d\nu
\biggr)^{p/2}\,dt \biggr)^{{1}/{p}}
+ \biggl(\E\int
_{(0,t]\ti B} \| F\| _H^p\, dt\,d\nu
\biggr)^{{1}/{p}}.\hspace*{-17pt}\nonumber
\end{eqnarray}
Due to the first term on the right-hand side, this estimate is only
near-optimal. Indeed, since
\[
\biggl(\E \biggl(\int_{(0,t]\ti B} \|F\|_H^2
\,d\nu \biggr)^{p/2}\,dt \biggr)^{{1}/{p}} \leq t^{{1}/{2}-{1}/{p}}
\biggl(\E\int_{(0,t]} \biggl(\int_{B} \|F
\|_H^2 \,d\nu \biggr)^{p/2}\,dt
\biggr)^{{1}/{p}},
\]
Theorem~\ref{thm:summarySILqPoissonNC} implies (\ref{eqn:MPR}) but not
vice versa. In \cite{MaR10}, Marinelli and R\"{o}ckner proved the bound
%
\begin{eqnarray}
\label{eqn:MaR} & &\biggl(\E\sup_{0<s\leq t} \biggl\|\int
_{(0,s]\ti B} F \,d\tilde {N} \biggr\| _{L^p(S)}^p
\biggr)^{{1}/{p}}
\nonumber
\\
& &\qquad\lesssim_{p,t} \biggl(\E\int_{(0,t]} \biggl(\int
_{B} \|F\| _{L^p(S)}^2 \,d\nu
\biggr)^{p/2}\,dt \biggr)^{{1}/{p}}\\
&&\hspace*{10pt}\qquad\quad{} + \biggl(\E\int
_{(0,t]\ti
B} \| F\|_{L^p(S)}^p\, dt\,d\nu
\biggr)^{{1}/{p}},\nonumber
\end{eqnarray}
valid for any $2\leq p<\infty$. This result is deduced by a Fubini-type
argument from the estimate (\ref{eqn:MPR}) for $H=\R$. Of course, such
an argument can only work if $p=q$ (in our notation). Observe that the
optimal bound in Theorem~\ref{thm:summarySILqPoissonNC} improves upon~(\ref{eqn:MaR}). Also note that the constants in (\ref
{eqn:summarySILqPoissonNC}) do not depend on $t$, in contrast to (\ref
{eqn:MPR}) and (\ref{eqn:MaR}). Finally, let us recall the following
bounds valid for a Banach space $X$ with martingale type $1<q\leq2$.
Brze\'{z}niak and Hausenblas showed (\cite{BrH09}, Corollary B.6) that
if $1<p\leq q$ then
%
\begin{eqnarray}
\label{eqn:BrH}&& \biggl(\E\sup_{0<s\leq t} \biggl\|\int
_{(0,s]\ti B} F \,d\tilde {N}\biggr \| _{X}^p
\biggr)^{{1}/{p}}
\lesssim_{p,q,X} \biggl(\E \biggl(\int
_{(0,t]\ti
B} \|F\|_X^q \,dt\,d\nu
\biggr)^{{p}/{q}} \biggr)^{{1}/{p}}.\hspace*{-24pt}
\end{eqnarray}
Moreover, Hausenblas proved (\cite{Hau11}, Proposition 2.14) that if
$p=q^n$ for some $n\in\N$, then
%
\begin{eqnarray}
\label{eqn:Hau} && \biggl(\E\sup_{0<s\leq t} \biggl\|\int
_{(0,s]\ti B} F \,d\tilde {N} \biggr\| _{X}^p
\biggr)^{{1}/{p}}
\nonumber\\[-4pt]\\[-12pt]\nonumber
&&\qquad\lesssim_{p,q,X} \biggl(\E \biggl(\int_{(0,t]\ti B} \|F
\|_X^{q} \,dt\,d\nu \biggr)^{{p}/{q}}
\biggr)^{{1}/{p}}
+ \biggl(\E\int_{(0,t]\ti B} \| F\|
_X^{p} \,dt\,d\nu \biggr)^{{1}/{p}}.\nonumber\hspace*{-8pt}
\end{eqnarray}
If $X=L^2(\cM)$, so that $q=2$, and $p=2^n$ then (\ref{eqn:Hau})
reproduces the optimal upper bound in Theorem~\ref
{thm:summarySILqPoissonNC}. In all other cases, however, both (\ref
{eqn:BrH}) and (\ref{eqn:Hau}) yield suboptimal bounds for $L^q$-spaces.

\begin{appendix}\label{app}
\section{Decoupling}
\label{app:decoupling}

In this appendix, we give a proof of the decoupling inequality (\ref
{eqn:decouplingSIIntro}). Recall that a Banach space $X$ is called a
\emph{UMD space} if for some (then, every) $1<p<\infty$ there is a
constant $C_{p,X}\ge0$ such that for any $X$-valued martingale
difference sequence $(d_n)_{n\geq1}$, any sequence of signs $(\varepsilon
_n)_{n\geq1}$ and any $N\geq1$ one has
%
\begin{equation}
\label{eqn:defUMD} \Biggl(\E \Biggl\| \sum_{n=1}^N
\varepsilon_n d_n \Biggr\|_X^p
\Biggr)^{{1}/{p}} \le C_{p,X} \Biggl(\E \Biggl\|\sum
_{n=1}^N d_n \Biggr\|_X^p
\Biggr)^{{1}/{p}}.
\end{equation}
It is well known that any $L^q$-space, classical or noncommutative, is
a UMD space if and only if $1<q<\infty$. We refer to \cite{Bur01} for
more information on UMD spaces.

The decoupling inequality (\ref{eqn:decouplingSIIntro}) is a direct
consequence of the following observation. For the convenience of the
reader, we reproduce its short proof, which appeared in \cite{Ver06},
Theorem 2.4.1 (see also \cite{Nee08}, Theorem 13.1).
%
\begin{lemma}
\label{lem:decouplingGeneral}
Let $1<p<\infty$ and let $X$ be a UMD Banach space. Consider a
filtration $(\cG_i)_{i=0}^n$ in $(\Om,\cF,\bP)$. Suppose that for every
$1\leq i\leq n$ we are given a $\cG_i$-measurable, mean-zero,
real-valued random variable $M_i$ which is independent of $\cG_{i-1}$
and, moreover, a $\cG_{i-1}$-measurable, $X$-valued random variable
$G_i$. Let $(M_{i}^c)_{i=1}^n$ be an independent copy of
$(M_i)_{i=1}^n$ on a probability space $(\Om_c,\cF_c,\bP_c)$. Then
%
\begin{equation}
\label{lem:decouplingGeneralLemma} \Biggl(\E\Biggl \|\sum_{i=1}^n
G_iM_i \Biggr\|_X^p
\Biggr)^{{1}/{p}} \leq C_{p,X} \Biggl(\E\E_c \Biggl\|\sum
_{i=1}^n G_iM_{i}^c
\Biggr\|_X^p \Biggr)^{{1}/{p}}.
\end{equation}
\end{lemma}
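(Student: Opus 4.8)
The plan is to recognise $(G_iM_i^c)_i$ as a decoupled tangent sequence of the martingale difference sequence $(G_iM_i)_i$ and to extract the estimate from the UMD inequality (\ref{eqn:defUMD}). Concretely, I would realise $\sum_i G_iM_i$ as a martingale transform of an interlaced martingale difference sequence of length $2n$, built on the product space $\bar\Om=\Om\times\Om_c$ with the product measure $\bP\otimes\bP_c$, in which the odd increments carry the original variables $M_i$ and the even increments carry the decoupled variables $M_i^c$.

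The construction runs as follows. For $1\le i\le n$ set
$$
\cH_{2i-1}=\cG_i\otimes\sigma(M_1^c,\dots,M_{i-1}^c),\qquad \cH_{2i}=\cG_i\otimes\sigma(M_1^c,\dots,M_i^c),
$$
and $\cH_0=\cG_0$, where $\cG_i$ is identified with $\cG_i\otimes\{\emptyset,\Om_c\}$ and $\sigma(M_1^c,\dots)$ with $\{\emptyset,\Om\}\otimes\sigma(M_1^c,\dots)$; thus passing from $\cH_{2i-2}$ to $\cH_{2i-1}$ reveals $M_i$ and passing to $\cH_{2i}$ reveals $M_i^c$. Put $d_{2i-1}=G_iM_i$ and $d_{2i}=G_iM_i^c$. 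The central point is that $(d_k)_{k=1}^{2n}$ is an $(\cH_k)$-martingale difference sequence, and this is exactly where the hypotheses enter: $d_{2i-1}$ is $\cH_{2i-1}$-measurable since $G_i$ is $\cG_{i-1}\subseteq\cG_i$-measurable and $M_i$ is $\cG_i$-measurable, while $G_i$ is $\cH_{2i-2}$-measurable and $M_i$, being independent of $\cG_{i-1}$ and of the copy $(M_j^c)_j$, is independent of $\cH_{2i-2}$ with mean zero, so $\E(d_{2i-1}\mid\cH_{2i-2})=G_i\,\E M_i=0$; similarly $d_{2i}$ is $\cH_{2i}$-measurable, $G_i$ is $\cH_{2i-1}$-measurable, and $M_i^c$ is independent of the whole $\Om$-factor and of $M_1^c,\dots,M_{i-1}^c$, hence of $\cH_{2i-1}$, and has mean zero, so $\E(d_{2i}\mid\cH_{2i-1})=0$.

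Since $\sum_{k=1}^{2n}d_k=\sum_i G_iM_i+\sum_i G_iM_i^c$ and $\sum_i G_iM_i$ is the transform of $(d_k)$ by the $(\cH_k)$-predictable multiplier $\phi$ with $\phi_{2i-1}=1$, $\phi_{2i}=0$, and since (\ref{eqn:defUMD}) extends in the standard way (first to predictable $\pm1$ multipliers, then by convex combinations to predictable multipliers with values in $\{-1,0,1\}$) with the same constant $C_{p,X}$, one immediately obtains the comparison of $(\E\|\sum_i G_iM_i\|_X^p)^{1/p}$ with $(\E\E_c\|\sum_i G_i(M_i+M_i^c)\|_X^p)^{1/p}$. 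The remaining — and decisive — task is to sharpen this into the one-sided comparison with $(\E\E_c\|\sum_i G_iM_i^c\|_X^p)^{1/p}$ claimed in the lemma; this is where the tangent structure (within each pair, $G_iM_i$ and $G_iM_i^c$ have the same conditional law given the preceding $\sigma$-algebra, namely the image of the law of $M_i$ under $G_i(\cdot)$) must be used in tandem with the UMD property, and is in essence the decoupling inequality for tangent sequences of McConnell and Hitczenko.

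The step I expect to be the real obstacle is precisely this last one: obtaining the sharp bound with a constant depending only on $p$ and $X$ and, crucially, independent of $n$. It is the $n$-independence that forces the use of the UMD property — a naive induction on $n$, peeling off one summand at a time, gives only a constant that degrades geometrically in $n$ and is therefore useless for the application to Poisson stochastic integrals. By contrast, the verification of the martingale difference property above, and the bookkeeping of which of $M_i$, $M_i^c$, $G_i$ is measurable with respect to, or independent of, each $\sigma$-algebra involved, is routine once the interlaced filtration is written down.
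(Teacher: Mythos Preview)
Your construction is sound as far as it goes, but the proof has a genuine gap at exactly the point you flag as the ``decisive task''. With your choice $d_{2i-1}=G_iM_i$, $d_{2i}=G_iM_i^c$, no $\{-1,0,1\}$-valued predictable multiplier turns the full sum $\sum_k d_k=\sum_i G_i(M_i+M_i^c)$ into $\sum_i G_iM_i^c$ alone; the sign transform $(-1)^k$ only produces $\sum_i G_i(M_i^c-M_i)$. Thus the UMD inequality, applied to your interlaced sequence, can only compare $\sum G_iM_i$ with $\sum G_i(M_i+M_i^c)$ (and symmetrically for $M_i^c$), never with $\sum G_iM_i^c$ directly. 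Your appeal at that point to the McConnell--Hitczenko decoupling inequality for tangent sequences is circular: that inequality is precisely what the lemma is establishing.

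The paper closes this gap with a different interlacing. Set
\[
d_{2i-1}=\tfrac12 G_i(M_i+M_i^c),\qquad d_{2i}=\tfrac12 G_i(M_i-M_i^c),
\]
and take the filtration $\cF_{2i}=\sigma(\cG_i,\cG_i^c)$, $\cF_{2i-1}=\sigma(\cG_{i-1},\cG_{i-1}^c,M_i+M_i^c)$, where $\cG_i^c=\sigma(M_1^c,\dots,M_i^c)$. Then $\sum_k d_k=\sum_i G_iM_i$ while $\sum_k(-1)^{k+1}d_k=\sum_i G_iM_i^c$, so a single application of (\ref{eqn:defUMD}) with the sign sequence $\epsilon_k=(-1)^{k+1}$ gives (\ref{lem:decouplingGeneralLemma}) immediately, with constant $C_{p,X}$ and with no dependence on $n$. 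The martingale difference verification uses, for the even steps, that $\E(M_i-M_i^c\mid M_i+M_i^c)=0$; this follows from exchangeability of $(M_i,M_i^c)$, which is where the assumption that $M_i^c$ is a \emph{copy} of $M_i$ enters. The ``sum and difference'' rotation is the missing idea in your argument.
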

\begin{pf}
For $i=1,\ldots,n$ let $\cG_i^c$ be the sub-$\sigma$-algebra
generated by
$(M_j^c)_{j=1}^i$. Define
\[
d_{2i} = \tfrac{1}{2}G_i\bigl(M_i-M_{i}^c
\bigr),\qquad d_{2i-1}=\tfrac{1}{2}G_i\bigl(M_i+M_{i}^c
\bigr).
\]
We claim that $(d_i)_{i=1}^{2n}$ is a martingale difference sequence on
$\Om\ti\Om_c$ with respect to the filtration $(\cF_i)_{i=1}^{2n}$
defined by
\[
\cF_{2i} = \sigma\bigl(\cG_i,\cG_{i}^c
\bigr), \qquad\cF_{2i-1} = \sigma\bigl(\cG _{i-1},\cG
_{i-1}^c,M_{i}+M_{i}^c
\bigr)\qquad (i=1,\ldots,n).
\]
The result immediately follows from this claim and the UMD-property, since
\[
\sum_{i=1}^{2n} d_i = \sum
_{i=1}^n G_iM_i,\qquad
\sum_{i=1}^{2n} (-1)^{i+1}
d_i = \sum_{i=1}^n
G_iM_{i}^c.
\]
To prove the claim, note that $(d_i)_{i=1}^{2n}$ is adapted. Moreover,
by our assumptions on the $G_i$ and $M_i$,
\[
\E(d_{2i-1}|\cF_{2i-2}) = \tfrac{1}{2} G_i\E
\bigl(M_i+M_{i}^c|\cG _{i-1},\cG
_{i-1}^c\bigr) = 0
\]
and
\begin{eqnarray*}
\E(d_{2i}|\cF_{2i-1}) & = &\tfrac{1}{2} G_i
\E\bigl(M_i-M_{i}^c|\cG _{i-1},\cG
_{i-1}^c,M_{i}+M_{i}^c
\bigr)
\\
& =& \tfrac{1}{2} G_i\E\bigl(M_i-M_{i}^c|M_{i}+M_{i}^c
\bigr) = 0,
\end{eqnarray*}
where the final step follows from a direct computation, using that
$M_i$ and $M_{i}^c$ are independent and identically distributed.
\end{pf}
%
\begin{lemma}
\label{lem:decouplingSI}
Let $1<p<\infty$ and let $X$ be a UMD Banach space. Let $N$ be a
Poisson random measure on $\R_+\ti J$ and let $N^c$ be an independent
copy of $N$. Fix a filtration $(\cF_t)_{t>0}$ in $\Om$ satisfying
Assumption~\ref{ass:PoissonFiltration}. If $F$ is a simple, adapted
$X$-valued process, then for all $t>0$ and $B\in\cJ$,
%
\begin{equation}
\label{eqn:decouplingSI} \biggl(\E \biggl\|\int_{(0,t]\ti B} F \,d\tilde{N}
\biggr\|_{X}^p \biggr)^{1/p} \leq C_{p,X}
\biggl(\E\E_c \biggl\|\int_{(0,t]\ti B} F \,d\tilde
{N}^c\biggr \|_{X}^p \biggr)^{{1}/{p}},
\end{equation}
where $C_{p,X}$ is the constant in (\ref{eqn:defUMD}).
\end{lemma}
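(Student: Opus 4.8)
The plan is to rewrite both stochastic integrals as sums of the form $\sum_\ell G_\ell M_\ell$ and $\sum_\ell G_\ell M_\ell^c$ occurring in Lemma~\ref{lem:decouplingGeneral} and to apply that lemma directly. I first carry out the usual reductions. Writing $F$ in the form (\ref{eqn:simpleBanach}) and refining the partition $\pi$, I may assume that $t=t_{l+1}$, since all terms with $(t_i\wedge t,t_{i+1}\wedge t]=\emptyset$ drop out of the integral; replacing each $A_j$ by $A_j\cap B$ and discarding empty sets, I may also assume that $A_1,\dots,A_m\subseteq B$ are disjoint with $\nu(A_j)<\infty$ and $B=\bigcup_{j=1}^m A_j$. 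Putting
$$y_{i,j}=\sum_{k=1}^n F_{i,j,k}\,x_{i,j,k},\qquad N_{i,j}=\tilde{N}((t_i,t_{i+1}]\ti A_j),\qquad N_{i,j}^c=\tilde{N}^c((t_i,t_{i+1}]\ti A_j),$$
the $y_{i,j}$ are $\cF_{t_i}$-measurable $X$-valued random variables, the $N_{i,j}$ are real-valued, mean-zero random variables, $(N_{i,j}^c)$ is an independent copy of $(N_{i,j})$, and
$$\int_{(0,t]\ti B}F\ d\tilde{N}=\sum_{i=1}^l\sum_{j=1}^m y_{i,j}N_{i,j},\qquad \int_{(0,t]\ti B}F\ d\tilde{N}^c=\sum_{i=1}^l\sum_{j=1}^m y_{i,j}N_{i,j}^c.$$

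Next I build a filtration along the lexicographically ordered index set $\{(i,j):1\le i\le l,\ 1\le j\le m\}$, declaring the $(i,j)$-th step to carry the increment $M=N_{i,j}$ and the coefficient $G=y_{i,j}$. I set $\cG_{i,0}=\cF_{t_i}$, then
$$\cG_{i,j}=\si\big(\cF_{t_i},N_{i,1},\dots,N_{i,j}\big)\ \ (1\le j\le m-1),\qquad \cG_{i,m}=\cF_{t_{i+1}},$$
so that $\cF_{t_1}=\cG_{1,0}\subseteq\cG_{1,1}\subseteq\cdots\subseteq\cG_{1,m}=\cG_{2,0}\subseteq\cdots$ is indeed a filtration, since $\cF_{t_i}\subseteq\cF_{t_{i+1}}$ and each $N_{i,j}$ is $\cF_{t_{i+1}}$-measurable by Assumption~\ref{ass:PoissonFiltration}. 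To verify the hypotheses of Lemma~\ref{lem:decouplingGeneral} at the $(i,j)$-th step, note first that $M=N_{i,j}$ is $\cG_{i,j}$-measurable and mean-zero, and that $G=y_{i,j}$ is $\cG_{i,j-1}$-measurable because $\cG_{i,j-1}\supseteq\cF_{t_i}$ in every case. For the independence of $N_{i,j}$ from $\cG_{i,j-1}=\si(\cF_{t_i},N_{i,1},\dots,N_{i,j-1})$ I use Assumption~\ref{ass:PoissonFiltration} (together with the independence properties of $N$), which ensures that the vector $(N_{i,1},\dots,N_{i,m})$ is independent of $\cF_{t_i}$, together with the fact that $A_j$ is disjoint from $A_1,\dots,A_{j-1}$, whence $N_{i,j}$ is independent of $N_{i,1},\dots,N_{i,j-1}$ by the defining property of a Poisson random measure; a routine argument combines these two independences into independence of $N_{i,j}$ from the $\si$-algebra generated by $\cF_{t_i}$ and $N_{i,1},\dots,N_{i,j-1}$.

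With all hypotheses in place, Lemma~\ref{lem:decouplingGeneral}, applied to the filtration $(\cG_{i,j})$ and the data $M=N_{i,j}$, $G=y_{i,j}$, $M^c=N_{i,j}^c$, yields
$$\Big(\E\Big\|\sum_{i,j}y_{i,j}N_{i,j}\Big\|_X^p\Big)^{\frac{1}{p}}\le C_{p,X}\Big(\E\E_c\Big\|\sum_{i,j}y_{i,j}N_{i,j}^c\Big\|_X^p\Big)^{\frac{1}{p}},$$
which is exactly (\ref{eqn:decouplingSI}) with $C_{p,X}$ the UMD constant from (\ref{eqn:defUMD}). I expect the only genuinely delicate point to be the filtration bookkeeping in the preceding paragraph: the $\si$-algebras have to be arranged so that the $\cF$-adaptedness of the coefficients $y_{i,j}$ and the independence of the increments $N_{i,j}$ from the past hold simultaneously, which is precisely why the last step of each time level is taken directly into $\cF_{t_{i+1}}$ rather than merely into $\si(\cF_{t_i},N_{i,1},\dots,N_{i,m})$. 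The remainder is a direct translation of the stochastic integral of a simple process into the setting of Lemma~\ref{lem:decouplingGeneral}.
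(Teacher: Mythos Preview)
Your proof is correct and follows essentially the same approach as the paper: both rewrite the stochastic integral as a lexicographically indexed sum $\sum_{i,j} y_{i,j} N_{i,j}$, build the filtration $\cG_{i,j}=\si(\cF_{t_i},N_{i,1},\dots,N_{i,j})$ with $\cG_{i,m}=\cF_{t_{i+1}}$, and apply Lemma~\ref{lem:decouplingGeneral}. Your verification of the hypotheses is in fact more explicit than the paper's, and your remark about why one must pass to $\cF_{t_{i+1}}$ at the end of each time level identifies exactly the one nontrivial point in the construction.
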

\begin{pf}
Let $F$ be the simple adapted process in (\ref{eqn:simpleBanach}). We
may assume that $t=t_{l+1}$ and $B=\bigcup_{j=1}^m A_j$. For every $1\leq
i\leq l$ and $1\leq j\leq m$, we set
\begin{eqnarray*}
G_{(i,j)} &= &\sum_{k=1}^n
F_{i,j,k}x_{i,j,k},\qquad M_{(i,j)} = \tilde
{N}\bigl((t_i,t_{i+1}]\ti A_j\bigr),\\
M_{(i,j)}^c& =& \tilde {N}^c\bigl((t_i,t_{i+1}]
\ti A_j\bigr).
\end{eqnarray*}
Under Assumption~\ref{ass:PoissonFiltration}, the subalgebras defined
for $i=1,\ldots,l$ and $j=1,\ldots,m$ by
\begin{eqnarray*}
\cG_{(i,j)} & =& \sigma \bigl(\cF_{t_i}, \tilde{N}
\bigl((t_i,t_{i+1}]\ti A_k\bigr), k=1,\ldots,j \bigr)\qquad
\mathrm{if}\ 1\leq j\leq m-1,
\\
\cG_{(i,m)} & =& \cF_{t_{i+1}}
\end{eqnarray*}
form a filtration if we equip the pairs $(i,j)$ with the lexicographic
ordering. Moreover, the sequences $(G_{(i,j)})_{(i,j)}$,
$(M_{(i,j)})_{(i,j)}$, and $(M_{(i,j)}^c)_{(i,j)}$ satisfy the
conditions of Lemma~\ref{lem:decouplingGeneral} and inequality (\ref
{eqn:decouplingSI}) exactly corresponds to the estimate (\ref
{lem:decouplingGeneralLemma}).
\end{pf}

\section{Proof of Theorem \texorpdfstring{\lowercase{\protect\ref{thm1.1}}}{1.1}: Remaining cases}
\label{app:remainingCases}

For completeness, we give a proof here of the remaining cases of
Theorem~\ref{thm1.1}. We continue to use the same
notation, in particular $\cI_{\mathrm{elem}}$ is the space of all
simple functions on $\Om\ti\R_+\ti J\ti S$ with support of finite
measure and $\cA$ denotes the sub-$\sigma$-algebra of $\cB(\R_+)\ti
\cJ$
generated by the sets $(t_i,t_{i+1}]\ti A_j$. Let us note that it
suffices to prove the upper estimates $\lesssim_{p,q}$ in (\ref
{eqn:summarySILqPoisson}). The reverse estimates then follow by the
duality argument presented in the case $1<p\leq q\leq2$.

\emph{Case $2\leq p\leq q\leq2$}: Fix $\eps>0$. By density of $\cI
_{\mathrm{elem}}$ in $\cD_{p,q}^p$ and $\cD_{q,q}^p$, we can find a
decomposition $F=F_1+F_2$ with $F_{\alpha} \in\cI_{\mathrm{elem}}$ for
$\al=1,2$ such that
\[
\|F_1\|_{\cD_{p,q}^p} + \|F_2\|_{\cD_{q,q}^p} \leq\|F
\|_{\cD
_{p,q}^p +
\cD_{q,q}^p} + \eps.
\]
We may assume that $F_{1}$ and $F_2$ have the same support in $\R_+\ti
J$ as $F$. By Lemma~\ref{lem:conditionSimple} $\E(F_{\alpha}|\cA)$ is
of the form
%
\begin{equation}
\label{eqn:FormFalphaCond} \E(F_{\alpha}|\cA) = \sum_{i,j,k}
F_{i,j,k,\alpha} \oto\chi _{(t_i,t_{i+1}]} \oto\chi_{A_j} \oto
x_{i,j,k,\alpha} \qquad(\alpha=1,2).
\end{equation}
Let $y_{i,j,\alpha} = \sum_{k=1}^n F_{i,j,k,\alpha} \oto
x_{i,j,k,\alpha
}$ and set $d_{i,j,\alpha} = y_{i,j,\alpha}N_{i,j}^c$, so that
\[
d_{i,j} = d_{i,j,1} + d_{i,j,2}.
\]
If we apply Theorem~\ref{thm:summaryRosIntro} pointwise in $\Om$ and
subsequently take $L^p(\Om)$-norms on both sides, we find
\begin{eqnarray*}
& &\biggl(\E\E_c \biggl\|\int_{(0,t]\ti B} F \,d
\tilde{N}^c \biggr\| _{L^q(S)}^p \biggr)^{{1}/{p}}
\\
&&\qquad = \biggl(\E\E_c \biggl\|\sum_{i,j}
d_{i,j} \biggr\|_{L^q(S)}^p \biggr)^{1/p}
\\
&&\qquad \lesssim_{p,q} \max \biggl\{ \biggl(\E \biggl\| \biggl(\sum
_{i,j} \E _{c}|d_{i,j}|^2
\biggr)^{{1}/{2}} \biggr\|_{L^q(S)}^p \biggr)^{{1}/{p}},
\\
&&\qquad\quad \hspace*{39pt}\biggl(\sum_{i,j} \E\E_c
\|d_{i,j,1}\|_{L^q(S)}^p \biggr)^{1/p} +
\biggl(\E \biggl(\sum_{i,j} \E_{c}
\|d_{i,j,2}\|_{L^q(S)}^q \biggr)^{p/q}
\biggr)^{{1}/{p}} \biggr\}
\\
&&\qquad \lesssim_{p,q} \max\bigl\{\|F\|_{\cS_q^p}, \|F_1
\|_{\cD_{p,q}^p} + \| F_2\| _{\cD_{q,q}^p}\bigr\} \leq\|F
\|_{\cI_{p,q}} + \eps,
\end{eqnarray*}
where the penultimate inequality follows by the computations in (\ref{eqn:normEstimatesFalpha}).

\emph{Case $1<p<2\leq q<\infty$}: Fix $\eps>0$. By density of $\cI
_{\mathrm{elem}}$ in $\cD_{p,q}^p$ and $\cS_q^p\cap\cD_{q,q}^p$, we
can find a decomposition $F=F_1+F_2$ with $F_{\alpha} \in\cI
_{\mathrm
{elem}}$ for $\al=1,2$ such that
\[
\|F_1\|_{\cD_{p,q}^p} + \|F_2\|_{\cS_q^p\cap\cD_{q,q}^p} \leq
\|F\| _{\cI_{p,q}} + \eps.
\]
We may assume that $F_{1}$ and $F_2$ have the same support in $\R_+\ti
J$ as $F$. By Lemma~\ref{lem:conditionSimple}, $\E(F_{\alpha}|\cA)$ is
of the form (\ref{eqn:FormFalphaCond}). Let $y_{i,j,\alpha} = \sum_{k=1}^n F_{i,j,k,\alpha} \oto x_{i,j,k,\alpha}$ and set
$d_{i,j,\alpha
} = y_{i,j,\alpha}N_{i,j}^c$, so that
\[
d_{i,j} = d_{i,j,1} + d_{i,j,2}.
\]
We apply Theorem~\ref{thm:summaryRosIntro} pointwise in $\Om$ and
subsequently take $L^p(\Om)$-norms on both sides to find
\begin{eqnarray*}
&& \biggl(\E\E_c \biggl\|\int_{(0,t]\ti B} F \,d
\tilde{N}^c \biggr\| _{L^q(S)}^p \biggr)^{{1}/{p}}
\\
&&\qquad = \biggl(\E\E_c \biggl\|\sum_{i,j}
d_{i,j} \biggr\|_{L^q(S)}^p \biggr)^{1/p}
\\
&&\qquad \lesssim_{p,q} \biggl(\sum_{i,j} \E
\E_c\|d_{i,j,1}\| _{L^q(S)}^p
\biggr)^{{1}/{p}}
\\
&&\hspace*{12pt}\qquad\quad{} + \max \biggl\{ \biggl(\E \biggl\| \biggl(\sum_{i,j} \E
_{c}|d_{i,j,2}|^2 \biggr)^{{1}/{2}}
\biggr\|_{L^q(S)}^p \biggr)^{{1}/{p}},
\\
&&\hspace*{63pt}\qquad\quad{} \biggl(\E \biggl(\sum_{i,j} \E_{c}
\|d_{i,j,2}\|_{L^q(S)}^q \biggr)^{p/q}
\biggr)^{{1}/{p}} \biggr\}
\\
&&\qquad \lesssim_{p,q} \|F_1\|_{\cD_{p,q}^p} + \max\bigl\{
\|F_2\|_{\cS_q^p},\| F_2\| _{\cD_{q,q}^p}\bigr\} \leq\|F
\|_{\cI_{p,q}} + \eps,
\end{eqnarray*}
where the penultimate inequality follows by (\ref{eqn:normEstimatesFalpha}).

\emph{Case $1<q<2\leq p<\infty$}: Let $\eps>0$. By density of $\cI
_{\mathrm{elem}}$ in $\cS_q^p$ and $\cD_{q,q}^p$, we can find a
decomposition $F=F_1+F_2$ with $F_{\alpha} \in\cI_{\mathrm{elem}}$ for
$\al=1,2$ such that
\[
\|F_1\|_{\cS_q^p} + \|F_2\|_{\cD_{q,q}^p} \leq\|F
\|_{\cS_q^p + \cD
_{q,q}^p} + \eps.
\]
We may assume that $F_{1}$ and $F_2$ have the same support in $\R_+\ti
J$ as $F$. By Lemma~\ref{lem:conditionSimple} $\E(F_{\alpha}|\cA)$ is
of the form (\ref{eqn:FormFalphaCond}). Let $y_{i,j,\alpha} = \sum_{k=1}^n F_{i,j,k,\alpha} \oto x_{i,j,k,\alpha}$ and set
$d_{i,j,\alpha
} = y_{i,j,\alpha}N_{i,j}^c$, so that
\[
d_{i,j} = d_{i,j,1} + d_{i,j,2}.
\]
We apply Theorem~\ref{thm:summaryRosIntro} pointwise in $\Om$ and
subsequently take $L^p(\Om)$-norms on both sides to obtain
\begin{eqnarray*}
&& \biggl(\E\E_c \biggl\|\int_{(0,t]\ti B} F \,d
\tilde{N}^c \biggr\| _{L^q(S)}^p \biggr)^{{1}/{p}}
\\
&&\qquad = \biggl(\E\E_c \biggl\|\sum_{i,j}
d_{i,j} \biggr\|_{L^q(S)}^p \biggr)^{1/p}
\\
&&\qquad \lesssim_{p,q} \max \biggl\{ \biggl(\sum
_{i,j} \E\E_c\|d_{i,j}\|
_{L^q(S)}^p \biggr)^{{1}/{p}},
\\
&&\hspace*{39pt}\qquad\quad \biggl(\E \biggl\| \biggl(\sum_{i,j}
\E_{c}|d_{i,j,1}|^2 \biggr)^{1/2}
\biggr\|_{L^q(S)}^p \biggr)^{{1}/{p}} \\
&&\hspace*{50pt}\qquad{}+ \biggl(\E \biggl(\sum
_{i,j} \E _{c}\|d_{i,j,2}
\|_{L^q(S)}^q \biggr)^{{p}/{q}} \biggr)^{1/p}
\biggr\}
\\
&&\qquad \lesssim_{p,q} \max\bigl\{\|F\|_{\cD_{p,q}^p}, \|F_1
\|_{\cS_q^p} + \| F_2\| _{\cD_{q,q}^p}\bigr\} \leq\|F
\|_{\cI_{p,q}} + \eps,
\end{eqnarray*}
where the penultimate inequality follows by the computations in (\ref
{eqn:normEstimatesFalpha}).

\emph{Case $1<q\leq p\leq2$}: Fix $\eps>0$. By density of
$\cI
_{\mathrm{elem}}$ in $\cS_q^p$ and $\cD_{q,q}^p\cap\cD_{p,q}^p$, we
can find a decomposition $F=F_1+F_2$ with $F_{\alpha} \in\cI
_{\mathrm
{elem}}$ for $\al=1,2$ such that
\[
\|F_1\|_{\cS_q^p} + \|F_2\|_{\cD_{q,q}^p\cap\cD_{p,q}^p} \leq
\|F\| _{\cI_{p,q}} + \eps.
\]
We may assume that $F_{1}$ and $F_2$ have the same support in $\R_+\ti
J$ as $F$. By Lemma~\ref{lem:conditionSimple}, $\E(F_{\alpha}|\cA)$ is
of the form (\ref{eqn:FormFalphaCond}). Let $y_{i,j,\alpha} = \sum_{k=1}^n F_{i,j,k,\alpha} \oto x_{i,j,k,\alpha}$ and set
$d_{i,j,\alpha
} = y_{i,j,\alpha}N_{i,j}^c$, so that
\[
d_{i,j} = d_{i,j,1} + d_{i,j,2}.
\]
We apply Theorem~\ref{thm:summaryRosIntro} pointwise in $\Om$ and
subsequently take $L^p(\Om)$-norms on both sides to find
\begin{eqnarray*}
&& \biggl(\E\E_c \biggl\|\int_{(0,t]\ti B} F \,d
\tilde{N}^c \biggr\| _{L^q(S)}^p \biggr)^{{1}/{p}}
\\
&&\qquad = \biggl(\E\E_c \biggl\|\sum_{i,j}
\,d_{i,j} \biggr\|_{L^q(S)}^p \biggr)^{1/p}
\\
&&\qquad \lesssim_{p,q} \biggl(\E \biggl\| \biggl(\sum_{i,j}
\E _{c}|d_{i,j,1}|^2 \biggr)^{{1}/{2}}
\biggr\|_{L^q(S)}^p \biggr)^{{1}/{p}}
\\
&&\qquad\qquad{} + \max \biggl\{ \biggl(\E \biggl(\sum_{i,j}
\E_{c}\|d_{i,j,2}\| _{L^q(S)}^q
\biggr)^{{p}/{q}} \biggr)^{{1}/{p}},
\\
&&\hspace*{51pt}\qquad\quad \biggl(\E \biggl\| \biggl(\sum_{i,j}
\E_{c}|d_{i,j,2}|^2 \biggr)^{1/2}
\biggr\|_{L^q(S)}^p \biggr)^{{1}/{p}} \biggr\}
\\
&&\qquad \lesssim_{p,q} \|F_1\|_{\cS_q^p} + \max\bigl\{
\|F_2\|_{\cD_{q,q}^p}, \| F_2\|_{\cD_{p,q}^p}\bigr\} \leq\|F
\|_{\cI_{p,q}} + \eps,
\end{eqnarray*}
where the penultimate inequality follows as in (\ref
{eqn:normEstimatesFalpha}). This completes the proof.
\end{appendix}

\section*{Acknowledgements}
It is a pleasure to thank Sonja Cox, Jan Maas, Jan van
Neerven and Mark Veraar for helpful discussions on the topic of this
paper. I would like to thank the anonymous reviewers for their detailed
comments, which substantially improved the presentation of the results.

%


\printaddresses

\end{document}